\documentclass[11pt]{article}
\usepackage{hyperref}
\usepackage{amsthm,amssymb}
\usepackage{amsmath}
\usepackage{color}
\usepackage{authblk}
\usepackage{thm-restate}
\usepackage{enumerate}
\usepackage{tikz}

\newtheorem{THM}{Theorem}[section]
\newtheorem{LEM}[THM]{Lemma}
\newtheorem{COR}[THM]{Corollary}
\newtheorem{PROP}[THM]{Proposition}

\newtheorem{QUE}[THM]{Question}

\newtheorem{OBS}[THM]{Observation}

\newcommand\abs[1]{\lvert #1\rvert}
\newcommand{\C}{\mathcal{C}}

\newcommand{\CA}{\mathcal{A}}

\newcommand{\Fan}{\operatorname{FAN}}
\newcommand{\fan}{\operatorname{Fan}}
\newcommand{\mc}[1]{\mathcal{#1}}

\newcommand{\emb}{\hookrightarrow}

\title{Obstructions for partitioning into forests and outerplanar graphs}
\date\today

\author[1]{Ringi Kim\thanks{ Supported by the National Research Foundation of Korea (NRF) grant
funded by the Korea government (MSIT) (NRF-2018R1C1B6003786), and 
also 
by Basic Science Research Program through the National Research Foundation of Korea (NRF) funded by the Ministry of Education (2019R1A6A1A10073887).
}}%
\author[2]{Sergey Norin\thanks{Supported by an NSERC discovery grant}}
\author[3,1]{Sang-il Oum\thanks{Supported by the Institute for Basic Science (IBS-R029-C1).}}

\affil[1]{Department of Mathematical Sciences, KAIST,  Daejeon, Korea.}
\affil[2]{Department of Mathematics and Statistics, McGill University, Montr\'eal, Canada.}
\affil[3]{Discrete Mathematics Group, Institute for Basic Science (IBS), Daejeon, Korea.}
\affil[ ]{\small kimrg@kaist.ac.kr, snorine@gmail.com, sangil@ibs.re.kr}

\begin{document}
\maketitle
\begin{abstract}
  For a class $\mathcal C$ of graphs,
  we define  \emph{$\mathcal C$-edge-brittleness} of a graph $G$
  as the minimum $\ell$ such that
  the vertex set of $G$ can be partitioned into sets inducing a subgraph in $\mathcal C$
  and there are $\ell$ edges having ends in distinct parts.
  We characterize classes of graphs having bounded $\mathcal C$-edge-brittleness
  for a class $\C$ of forests or a class $\C$ of graphs with no $K_4\setminus e$ topological minors
  in terms of forbidden obstructions.
  We also define \emph{$\mathcal C$-vertex-brittleness} of a graph $G$
  as the minimum $\ell$ such that
  the edge set of $G$ can be partitioned into sets inducing a subgraph in $\mathcal C$
  and there are $\ell$ vertices incident with edges in distinct parts.
  We characterize classes of graphs having bounded $\mathcal C$-vertex-brittleness
  for a class $\C$ of forests
  or a class $\C$ of outerplanar graphs
  in terms of forbidden obstructions.
  We also investigate the relations between the new parameters and the edit distance.
\end{abstract}

\section{Introduction}

How far is a graph $G$ from a graph class $\C$? 

A natural and well-studied (see e.g.~\cite{AlonStav08,Martin16}) measure of this distance is defined as follows.
The \emph{edit distance} $e_{\C}(G)$ from a graph $G$ to a graph class $\C$ is the minimum of $|E(G') \triangle E(G)|$ taken over all $G' \in \C$ with $V(G')=V(G)$. If $\C$ is  \emph{monotone}, that is, $\C$ is closed under isomorphisms and taking subgraphs, then  $e_{\C}(G)$ is simply the minimum number of edges one needs to delete from $G$ to obtain a graph in $\C$. In this paper, we only consider simple graphs and monotone graph classes.

In addition to the edit distance we consider two alternative distance parameters, which measure how difficult it is to partition a graph $G$ into parts which belong to the class $\C$.  For a  graph $G$, the \emph{$\C$-edge-brittleness} of $G$, denoted by $\eta_{\C}(G)$,
is the minimum integer $\ell$ such that there is a partition $(V_1,V_2,\ldots,V_n)$ of $V(G)$
such that $G[V_i]\in \C$ for all $i$
and the number of edges having ends in distinct $V_i$'s is $\ell$.\footnote{Given a graph $G$ and $X \subseteq V(G)$ we denote by $G[X]$ the subgraph of $G$ induced by~$X$.} 
A dual parameter, the \emph{$\C$-vertex-brittleness} of $G$,  denoted by $\kappa_{\C}(G)$,
is the minimum integer $\ell$ such that
there is a partition $(E_1,E_2,\ldots,E_n)$ of $E(G)$
with the property that the subgraph of $G$ induced by the edges in $E_i$ belongs to $\C$ for each $i$
and the number of vertices incident with edges in distinct $E_i$'s is $\ell$.

Finally, we add yet another parameter to our list, which comes from a packing problem dual to the covering problem which defines the edit distance.  For a  graph $G$, the \emph{$\bar{\C}$-capacity} of $G$, denoted by $\nu_{\C}(G)$, is the maximum integer $\ell$ such that there exist edge-disjoint subgraphs $H_1,H_2,\ldots,H_{\ell}$ of $G$ such that $H_i$ does not belong to $\C$ for each $i$. 

The following easy observation describes  the basic relations  between the edit distance and the above parameters. 

\begin{OBS}\label{obs:basic}	Let $\C$ be a monotone graph class such that $K_1,K_2 \in \C$,  and $\C$ is closed under taking disjoint unions.\footnote{We write $K_n$ and $K_{m,n}$ for the complete graph on $n$ vertices
	and the complete bipartite graph on $m+n$ vertices partitioned into sets of $m$ and $n$ vertices. } Then for every graph $G$ we have $e_{\C}(G) \leq \eta_{\C}(G)$, $\kappa_{\C}(G)/2 \leq e_{\C}(G)$, and $\nu_{\C}(G) \leq e_{\C}(G)$.
\end{OBS}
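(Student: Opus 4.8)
The plan is to prove the three inequalities one at a time, in each case starting from an optimal witness for the larger quantity and building a feasible configuration for the smaller one. The key reduction I will use repeatedly is the one already noted in the introduction: since $\C$ is monotone, $e_{\C}(G)$ equals the minimum size of an edge set $F \subseteq E(G)$ for which $G - F \in \C$. Fix such a minimum set $F$, so $\abs{F} = e_{\C}(G)$.

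For $e_{\C}(G) \le \eta_{\C}(G)$, let $(V_1, \dots, V_n)$ be a partition of $V(G)$ realizing $\eta_{\C}(G)$, so that each $G[V_i]$ lies in $\C$ and exactly $\eta_{\C}(G)$ edges of $G$ join distinct parts. Deleting precisely those crossing edges from $G$ yields the disjoint union of $G[V_1], \dots, G[V_n]$, which belongs to $\C$ because $\C$ is closed under disjoint unions; since this graph differs from $G$ in exactly $\eta_{\C}(G)$ edges and has the same vertex set, $e_{\C}(G) \le \eta_{\C}(G)$.

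For $\kappa_{\C}(G)/2 \le e_{\C}(G)$, consider the partition of $E(G)$ whose parts are $E(G) \setminus F$ together with the singletons $\{f\}$ for $f \in F$. The part $E(G)\setminus F$ induces a subgraph of $G-F$, which is in $\C$ by monotonicity, and each singleton induces a copy of $K_2 \in \C$, so this partition is admissible for the definition of $\kappa_{\C}$. Any vertex incident with edges from two distinct parts of this partition must be incident with an edge of $F$, and there are at most $2\abs{F}$ such vertices; hence $\kappa_{\C}(G) \le 2\abs{F} = 2 e_{\C}(G)$. For $\nu_{\C}(G) \le e_{\C}(G)$, let $H_1, \dots, H_{\ell}$ be edge-disjoint subgraphs of $G$ with $\ell = \nu_{\C}(G)$, none of them in $\C$. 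If some $H_i$ contained no edge of $F$, then $H_i$ would be a subgraph of $G - F \in \C$, hence in $\C$ by monotonicity, a contradiction; so each $H_i$ contains an edge of $F$. Since the $H_i$ are pairwise edge-disjoint, these chosen edges are distinct, and therefore $\ell \le \abs{F} = e_{\C}(G)$.

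I do not expect a genuine obstacle here: each inequality reduces to a one-line construction. The only places where the hypotheses enter are the use of monotonicity to pass from the edit distance to edge deletion and to restrict to subgraphs, the use of closure under disjoint unions in the first inequality, and the assumption $K_2 \in \C$, which is needed so that the singleton parts in the second inequality are legal and, together with $K_1 \in \C$, so that the partitions defining $\eta_{\C}$ and $\kappa_{\C}$ exist at all.
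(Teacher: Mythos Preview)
Your proof is correct and follows essentially the same approach as the paper's: the same optimal vertex partition for the first inequality, the same partition of $E(G)$ into $E(G)\setminus F$ plus singletons for the second, and the same observation that each $H_i$ must meet $F$ for the third. Your additional remarks on exactly where each hypothesis is used are accurate and not in the paper's version, but the arguments themselves coincide.
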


\begin{proof}
	Let $(V_1,V_2,\ldots,V_n)$ be a partition of $V(G)$ such that $G[V_i]\in \C$ for all $i$ and $\eta_{\C}(G)$ edges of $G$ have ends in distinct parts of this partition. Let $G' = \bigcup_{i=1}^n G[V_i]$. Then $G' \in \C$, as $\C$ is closed under taking disjoint unions, and $G'$ is obtained from $G$ by deleting  $\eta_{\C}(G)$ edges, implying the first inequality.
	
	Let $F$ be a set of edges such that $|F|= e_{\C}(G)$ and $G\setminus F \in \C$. For the second inequality, consider a partition of $E(G)$ into $E(G)-F$ and $|F|$ parts of size one corresponding to elements of $F$. The subgraph of $G$ induced by each part of this partition lies in $\C$ by our assumptions, while the number of vertices incident with two edges in two distinct parts is clearly at most $2|F|=2 e_{\C}(G)$. It follows that  $\kappa_{\C}(G) \leq 2e_{\C}(G)$, as desired.
	
	Finally, let $\ell = \nu_{\C}(G)$, and let $H_1,H_2,\ldots,H_{\ell}$ be as in the definition of $\nu_{\C}(G)$. 
	Let $F$ be as in the previous paragraph. Then, $F \cap E(H_i) \neq \emptyset$ for every $i$. Thus $|F| \geq \ell$, implying the last inequality.
\end{proof}
Recall that for a graph $G$, a \emph{subdivision} of $G$ is a graph obtained from $G$ by replacing edges of $G$ with internally disjoint paths of length at least~$1$.
For graphs $G$ and $H$, we say $H$ is a \emph{topological minor} of $G$ if
$G$ has a subgraph that is a subdivision of $H$.
A graph $H$ is a \emph{minor} of a graph $G$ if $H$ can be obtained from $G$
by a sequence of edge contractions, edge deletions, and vertex deletions.
A graph is \emph{outerplanar} if it can be drawn on the plane without edge crossings such that there is a face incident with all vertices.
We say $G$ is \emph{$H$-free}, if
no topological minor of $G$ is isomorphic to $H$.
For a set $\mathcal{H}$ of graphs, we say $G$ is $\mathcal{H}$-free if $G$ is $H$-free for every $H \in \mathcal{H}$.
A \emph{diamond} is the graph obtained from $K_4$ by removing one edge, see Figure~\ref{fig:diamond}.
\begin{figure}
	\centering
	\tikzstyle{v}=[circle, draw, solid, fill=black, inner sep=0pt, minimum width=3pt]
	\begin{tikzpicture}
	\node[v] at (0,0) (v1) {};
	\node[v] at (60:.5) (v2) {};
	\node[v] at (-60:.5) (v4) {};
	\node[v] at (.5,0) (v3) {};
	\draw (v1)--(v2)--(v3)--(v4)--(v1);
	\draw (v1)--(v3);
	\end{tikzpicture}

	\caption{The diamond graph $D$.}
	\label{fig:diamond}
\end{figure}
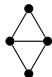

The goal of this paper is to investigate structural reasons that guarantee that a graph is far from a given class $\C$, using each of the above measures of distance. In other words,  we attempt to determine qualitative obstructions to partitioning a graph into graphs in $\C$. 
We concentrate on the class $\CA$ of forests, the class $\mc{O}$ of outerplanar graphs, and the class $\mc{D}$ of diamond-free graphs, that is intermediate between $\CA$ and $\mc{O}$.

These classes are not only monotone, but also closed under topological minors.
For this paper, we say that a class $\C$ of graphs is an \emph{ideal} if $\C$ is closed under isomorphisms and taking topological minors. As will be shown later (Proposition~\ref{prop:topteb}), if $\C$ is an ideal, then taking topological minors does not increase edit distance to $\C$, $\C$-vertex-brittleness, or $\bar{\C}$-capacity. Thus a characterization of  minimal ideals which have unbounded  edit distance to $\C$  (respectively,  $\C$-vertex-brittleness, and $\bar{\C}$-capacity) is an explicit and convenient description of the obstructions we are interested in. Moreover, we will show that $\C$-edge-brittleness is bounded by a function of the edit distance to $\C$ for the classes that we consider (although not in general). Thus we will also obtain a description of obstructions for unbounded $\C$-edge-brittleness.

Our first two theorems give a  characterization of  minimal ideals which have unbounded distance to the class of forests. 

Let $G$ be a graph, $S \subsetneq V(G)$, and let $k$ be a positive integer. A graph $\fan(G,S,k)$ is obtained from $k$ vertex-disjoint copies of $G$ by  identifying all the copies of $v$ for each $v \in S$ 
and identifying all the copies of $e$ for each $e \in E(G[S])$.
Thus, for example, $\fan(G, \emptyset,k)$ is the disjoint union of $k$ copies of $G$, and $\fan(K_2,\{v\},k)$ for $v \in V(K_2)$ is a star with $k$ leaves. Let $\Fan(G,S)$ denote the ideal 
consisting of all graphs isomorphic to a topological minor of $\fan(G,S,k)$ for some positive integer $k$. 
We say that a graph parameter $f$ is \emph{bounded} on a class $\mathcal G$ of graphs if there is a constant~$M$ such that $\abs{f(G)}\le M$ for all $G\in \mathcal G$.
\begin{restatable}{THM}{forestedge}
\label{thm:forest_edge}
  Let $\mathcal{G}$ be an ideal.
Then, the following are equivalent.
\begin{enumerate}
	\item $\nu_{\CA}$ is bounded on $\mc{G}$.
	\item $e_{\CA}$ is bounded on $\mc{G}$.
	\item $\eta_{\CA}$ is bounded on $\mc{G}$.
	\item $\Fan(K_3,S) \not\subseteq \mathcal G$ for every $S \subseteq V(K_3), |S| \leq 2$.
\end{enumerate}
\end{restatable}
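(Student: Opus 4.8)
The plan is to establish the cycle of implications $(1)\Rightarrow(4)$, $(4)\Rightarrow(3)$, $(3)\Rightarrow(2)$, $(2)\Rightarrow(1)$. The last two implications are essentially free: $(3)\Rightarrow(2)$ is the first inequality of Observation~\ref{obs:basic} (note $\CA$ contains $K_1,K_2$ and is closed under disjoint unions), and $(2)\Rightarrow(1)$ is the last inequality of the same observation. So the real content is $(1)\Rightarrow(4)$ and $(4)\Rightarrow(3)$.

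For $(1)\Rightarrow(4)$, I would argue contrapositively: if $\Fan(K_3,S)\subseteq\mathcal G$ for some $S\subseteq V(K_3)$ with $\abs S\le 2$, then $\nu_{\CA}$ is unbounded on $\mathcal G$. For each of the three cases ($\abs S=0,1,2$) the graph $\fan(K_3,S,k)$ contains $k$ edge-disjoint triangles when $\abs S\le 1$ — indeed for $S=\emptyset$ it is a disjoint union of $k$ triangles, and for $\abs S=1$ it is $k$ triangles sharing a single vertex (the "friendship graph"), still edge-disjoint. For $\abs S=2$, $\fan(K_3,S,k)$ is obtained from a fixed edge $uv$ by adding $k$ internally disjoint paths of length $2$ from $u$ to $v$; this contains roughly $k/2$ edge-disjoint cycles (pair up the $u$–$v$ paths). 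In each case we get $\Omega(k)$ edge-disjoint non-forests (cycles), so $\nu_{\CA}(\fan(K_3,S,k))\to\infty$; since these graphs lie in $\mathcal G$, this gives $(1)$ fails. One should double-check that topological-minor-closure of $\Fan(K_3,S)$ is used correctly, but the cycles above are literally subgraphs, so no subtlety arises.

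For $(4)\Rightarrow(3)$, the goal is: if $\mathcal G$ contains none of the (at most three, up to isomorphism) ideals $\Fan(K_3,S)$, then $\eta_{\CA}$ is bounded on $\mathcal G$. I expect this to be the main obstacle. The natural strategy is a Ramsey/sunflower-type argument on large "clusters" of cycles. If $\eta_{\CA}(G)$ is large, then no matter how we delete few edges, many cycles survive; one wants to extract from $G$ a large structured family of cycles that forces one of the three fan configurations as a topological minor. Concretely, I would first reduce to the case where $G$ has bounded-size "core": repeatedly delete low-degree vertices and contract/suppress degree-two vertices, observing these operations change $\eta_{\CA}$ and membership in $\mathcal G$ controllably (degree-two suppression is exactly passing to a topological minor, which does not create new obstructions). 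After this reduction $G$ either is small (bounded $\eta_{\CA}$ trivially) or has many vertices of degree $\ge 3$; then a counting argument produces many internally disjoint paths between some pair of "branch vertices," or many disjoint cycles, or many cycles through a common vertex — and each outcome, via Ramsey to clean up how the pieces attach, yields $\fan(K_3,\emptyset,k)$, $\fan(K_3,\{v\},k)$, or $\fan(K_3,\{u,v\},k)$ respectively as a topological minor for unbounded $k$, contradicting $(4)$. The delicate points will be (a) making the "bounded core" reduction quantitatively honest — showing that $\eta_{\CA}$ is bounded in terms of the core's $\eta_{\CA}$ plus the number of suppressed/deleted edges, which needs care because edges across parts can proliferate; and (b) the case analysis ensuring the extracted path-system is genuinely internally disjoint and attaches at only one or two common vertices, which is where a Menger-type argument plus pigeonhole on the at most two "frequent" attachment vertices comes in. I would organize (b) as: if some vertex lies on unboundedly many of our cycles, aim for $\abs S\le 1$; otherwise every vertex is on boundedly many, so by a fractional-Helly / sunflower argument the cycles split into an unbounded subfamily pairwise sharing at most a common bounded set, and refine to get $\abs S\in\{0,2\}$.
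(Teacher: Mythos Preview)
Your handling of $(3)\Rightarrow(2)\Rightarrow(1)$ via Observation~\ref{obs:basic} and of $(1)\Rightarrow(4)$ by exhibiting edge-disjoint cycles in each $\fan(K_3,S,k)$ is correct and matches the paper (which packages the latter via Lemma~\ref{l:lower} and the identification $\mc{K}_{2,*}=\Fan(K_3,S)$ for $\abs S=2$, but the content is the same).

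For $(4)\Rightarrow(3)$ your route diverges from the paper's. The paper derives this case from a more general result, Theorem~\ref{thm:edge}, applied with $\mc F=\{K_3\}$. The engine is Corollary~\ref{c:tworoots}: since $\fan(K_3,\emptyset,\ell)\notin\mathcal G$ forces bounded tree-width (Theorem~\ref{thm:planar}), an Erd\H{o}s--P\'osa argument on a tree-decomposition (Lemma~\ref{l:EP2}) produces a bounded set $X\subseteq V(G)$ meeting every cycle of $G$ in at least two vertices. Then, using the exclusion of $K_{2,\ell}$ together with a short diamond-rerouting argument, each $x\in X$ is separated from $X-\{x\}$ by boundedly many edges (Menger), and the resulting vertex partition certifies bounded $\eta_{\CA}$.

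Your sketch has a genuine gap at the step you label (b). After passing to the min-degree-$\ge 3$ core $G'$, you assert that a ``counting argument'' yields one of: many disjoint cycles, many cycles through one vertex, or many internally disjoint paths between a pair of vertices. This trichotomy is precisely the hard content of the theorem and does not follow from counting alone---a large $3$-regular graph has large cycle space, but extracting a \emph{structured} family (pairwise disjoint, or sharing exactly one common vertex, or forming a $K_{2,k}$) requires an Erd\H{o}s--P\'osa-type input, which is exactly what the paper supplies through bounded tree-width. Your sunflower suggestion does not obviously help, since cycles in such a graph need not form a sunflower with small kernel. By contrast, your worry (a) is unfounded: for $\CA$ one checks directly that $\eta_{\CA}(G)\le\eta_{\CA}(G')$ whenever $G$ arises from $G'$ by subdividing edges and attaching pendant trees (lift a partition of $G'$ by placing each new vertex in the part of an adjacent old vertex), so the reduction to the core is harmless---the entire difficulty lies in bounding the core, and for that you need the paper's tree-width machinery or an equivalent.
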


Thus the parameters $\nu_{\CA}$,
$e_{\CA}$, and $\eta_{\CA}$ are tied. On the other hand, it is easy to see that $\kappa_{\CA}(K_{2,n})=2$, while $\nu_{\CA}(K_{2,n}) \geq \lfloor n/2\rfloor$. 
The next theorem characterizes the graph classes with bounded $\C$-vertex-brittleness, showing that this example is essentially the only source of discrepancy.
 
\begin{restatable}{THM}{forestvertex}\label{thm:forest_vertex}
 	Let $\mathcal{G}$ be an ideal.
 	Then,  $\kappa_{\CA}$ is bounded on $\mc{G}$ if and only if
 	$\Fan(K_3,S) \not\subseteq \mathcal G$ for every $S \subseteq V(K_3), |S| \leq 1$.
 \end{restatable}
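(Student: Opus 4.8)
\textbf{Step 1: a combinatorial description of $\kappa_{\CA}$.}
Call a set $B\subseteq V(G)$ \emph{forest-decomposing} if $G-B$ is a forest and every vertex of $B$ has at most one neighbour in each component of $G-B$. I claim that $\kappa_{\CA}(G)$ equals the minimum size of a forest-decomposing set of $G$. Given such a $B$, partition $E(G)$ by taking, for each component $C$ of $G-B$, the set $E_C$ of all edges of $G$ incident with $V(C)$, together with a singleton part for each edge of $G[B]$. In $G[E_C]$ every vertex of $B$ has degree at most one, so repeatedly deleting leaves leaves the tree $C$; hence $G[E_C]$ is a forest, every other part is a single edge, and only vertices of $B$ meet edges of two parts, so $\kappa_{\CA}(G)\le |B|$. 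Conversely, take an optimal partition and let $B$ be its set of ``bad'' vertices, $|B|=\kappa_{\CA}(G)$. A vertex outside $B$ meets edges of only one part, so all edges inside a given component of $G-B$ lie in one part; that part induces a forest, so $G-B$ is a forest; and if some $b\in B$ had two neighbours $u,u'$ in a component $C$ of $G-B$, then $bu$, $bu'$ and the $u$--$u'$ path in $C$ would all lie in the part of $C$, creating a cycle there. Hence $\kappa_{\CA}(G)\ge|B|$, and equality follows.

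\textbf{Step 2: the forbidden ideals give unbounded $\kappa_{\CA}$.}
Using Step 1, a forest-decomposing set of $\fan(K_3,\emptyset,k)$ must contain at least two vertices of each of the $k$ triangles, so $\kappa_{\CA}(\fan(K_3,\emptyset,k))=2k$; and a forest-decomposing set $B$ of $\fan(K_3,\{v\},k)$ must contain $a_i$ or $b_i$ for each triangle $va_ib_i$, since otherwise either the whole triangle survives in $G-B$, or $v\in B$ has the two neighbours $a_i,b_i$ in one component of $G-B$. Thus $\kappa_{\CA}(\fan(K_3,\{v\},k))\ge k$. As $\fan(K_3,\emptyset,k)\in\Fan(K_3,\emptyset)$ and $\fan(K_3,\{v\},k)\in\Fan(K_3,\{v\})$, if some $\Fan(K_3,S)$ with $|S|\le1$ is contained in $\mathcal G$ then $\kappa_{\CA}$ is unbounded on $\mathcal G$; this proves one direction.

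\textbf{Step 3: excluding the forbidden ideals bounds $\kappa_{\CA}$.}
Suppose $\Fan(K_3,\emptyset)\not\subseteq\mathcal G$ and $\Fan(K_3,\{v\})\not\subseteq\mathcal G$. Since $\mathcal G$ is an ideal there are integers $t,k_0$ with $\fan(K_3,\emptyset,t)\notin\mathcal G$ and $\fan(K_3,\{v\},k_0)\notin\mathcal G$; hence no $G\in\mathcal G$ has $t$ vertex-disjoint cycles (their union is a subdivision of $\fan(K_3,\emptyset,t)$) and no $G\in\mathcal G$ contains a subdivision of $\fan(K_3,\{v\},k_0)$, i.e.\ no vertex of $G$ lies in $k_0$ cycles pairwise meeting only in that vertex. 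Fix $G\in\mathcal G$. By the Erd\H{o}s--P\'osa theorem there is a feedback vertex set $X_0$ with $|X_0|\le c$ for a constant $c=c(t)$, so $G-X_0$ is a forest. For $x\in X_0$: if $x$ had two neighbours in $k_0$ distinct components of $G-X_0$, choosing two such neighbours and a path between them inside each component would yield a forbidden subdivision; so $x$ has $\ge2$ neighbours in fewer than $k_0$ components. For each such component $C$, there are fewer than $k_0$ vertex-disjoint paths with both ends in $N(x)\cap C$ (again these would give a forbidden subdivision), so since $C$ is a tree, the min--max theorem for vertex-disjoint terminal paths in a tree gives $R_{x,C}\subseteq V(C)$ with $|R_{x,C}|<k_0$ such that each component of $C-R_{x,C}$ contains at most one vertex of $N(x)\cap C$. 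Put $X_1=X_0\cup\bigcup_{x,C}R_{x,C}$, so $|X_1|\le c(1+k_0^2)$. Then $X_1$ is forest-decomposing: $G-X_1$ is a forest; for $x\in X_0$, each component of $G-X_1$ lies inside a component of $C-R_{x,C}$ and so has at most one neighbour of $x$; and for a newly added vertex $r\in R_{x,C}\subseteq V(C)$, all neighbours of $r$ outside $X_1$ lie in the tree $C$, and two neighbours of $r$ in $C$ are joined in $C$ only through $r$, hence lie in different components of $G-X_1$. By Step 1, $\kappa_{\CA}(G)\le|X_1|\le c(1+k_0^2)$, which is bounded on $\mathcal G$.

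\textbf{Main obstacle.}
The heart of the argument is the description of $\kappa_{\CA}$ in Step 1 and, in Step 3, the observation that a \emph{single} refinement of the feedback vertex set suffices: because $G-X_0$ is already a forest, the vertices newly added to repair the degree condition automatically satisfy it, so the construction does not iterate (iterating would blow up the bound). The remaining inputs are routine: the Erd\H{o}s--P\'osa theorem, and the min--max equality between vertex-disjoint terminal paths in a tree and vertex sets separating the terminals, the latter being provable by a short induction on the tree.
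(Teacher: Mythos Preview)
Your proof is correct and takes a genuinely different route from the paper's.

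The paper derives Theorem~\ref{thm:forest_vertex} as a special case of the general machinery of Section~\ref{sec:vertex}: it proves Theorem~\ref{t:vbrittle}, which characterizes bounded $\kappa_{\C}$ in terms of \emph{traps} for any $\C$ defined by finitely many $2$-connected forbidden topological minors, and then classifies $K_3$-traps (Lemma in Section~\ref{sec:traps}). That machinery in turn rests on Corollary~\ref{c:tworoots}, which uses tree-width bounds via the Robertson--Seymour grid theorem, together with Lemma~\ref{lem:manypath} to break up bridges. Your argument bypasses all of this. The characterization in Step~1 of $\kappa_{\CA}(G)$ as the minimum size of a ``forest-decomposing'' set is specific to forests but very clean, and it reduces the problem to bounding a feedback vertex set with a degree condition. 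In Step~3 you then use Erd\H{o}s--P\'osa directly (in place of the tree-width route), and a single refinement using the min--max equality for vertex-disjoint $T$-paths in a tree---which is exactly the Helly property of Theorem~\ref{thm:subtree} applied to the family of paths between neighbours of $x$---to achieve the degree condition. The crucial observation that vertices added in the refinement automatically satisfy the condition (because any two of their neighbours in the original tree component are separated by the vertex itself) is what keeps the bound finite.

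What each approach buys: yours is shorter and more elementary for $\C=\CA$, avoiding tree-width and the trap formalism entirely, and gives an explicit bound $\kappa_{\CA}(G)\le c(t)(1+k_0^2)$. The paper's approach, by contrast, is uniform: the same framework handles $\CA$, $\mc{D}$, and $\mc{O}$ (Theorems~\ref{thm:forest_vertex}, \ref{thm:diamond_vertex}, \ref{thm:outer_vertex}) by simply reclassifying traps, whereas your Step~1 characterization has no obvious analogue for $\mc{D}$ or $\mc{O}$.
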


Next we extend Theorem~\ref{thm:forest_edge} to the class of diamond-free graphs.  
Let $D$ be the diamond graph in Figure~\ref{fig:diamond}. 
Let $\mc{D}$ denote the class of $D$-free graphs. 
For convenience let us define another special ideal $\mc{K}_{2,*}$ consisting of graphs which are isomorphic to a topological minor of $K_{2,n}$ for some $n$. It is easy to see that 
$\mc{K}_{2,*}=\Fan(K_3,S)$ for $S \subseteq V(K_3)$ with $|S|=2$.

\begin{restatable}{THM}{diamondedge}\label{thm:diamond_edge}
  Let $\mathcal{G}$ be an ideal. Then, the following are equivalent.
  \begin{enumerate}
  	\item $\nu_{\mc{D}}$ is bounded on $\mc{G}$.
  	\item $e_{\mc{D}}$ is bounded on $\mc{G}$.
  	\item $\eta_{\mc{D}}$ is bounded on $\mc{G}$.
  	\item $\Fan(D,S) \not\subseteq \mathcal G$ for every $S \subseteq V(D)$ with $|S| \leq 1$, and
  	$\mc{K}_{2,*} \not\subseteq \mathcal G$.
  \end{enumerate}
\end{restatable}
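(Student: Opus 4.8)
The plan is to prove the cyclic chain of implications $(3)\Rightarrow(2)\Rightarrow(1)\Rightarrow(4)\Rightarrow(3)$, mirroring the structure that presumably was used for Theorem~\ref{thm:forest_edge}. The implication $(3)\Rightarrow(2)$ is immediate from Observation~\ref{obs:basic}, since $\mc{D}$ is monotone, contains $K_1,K_2$, and is closed under disjoint unions; likewise $(2)\Rightarrow(1)$ is the inequality $\nu_{\mc{D}}(G)\le e_{\mc{D}}(G)$ from the same observation. So the two real tasks are $(1)\Rightarrow(4)$ and $(4)\Rightarrow(3)$.

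For $(1)\Rightarrow(4)$ I would argue by contrapositive: assume one of the forbidden ideals is contained in $\mc{G}$ and produce graphs in $\mc{G}$ with unbounded $\bar{\mc{D}}$-capacity. If $\mc{K}_{2,*}\subseteq\mc{G}$, then $K_{2,n}\in\mc{G}$ for all $n$, and one checks that $K_{2,n}$ contains $\lfloor n/3\rfloor$ edge-disjoint copies of $K_{2,3}\supseteq D$ (as a topological minor—three internally disjoint paths between two vertices contain a diamond), giving unbounded capacity. If $\Fan(D,S)\subseteq\mc{G}$ for some $S$ with $\abs{S}\le 1$, then $\fan(D,S,k)\in\mc{G}$ for all $k$; since the $k$ copies of $D$ in $\fan(D,S,k)$ share at most one vertex (hence no edges), they are $k$ edge-disjoint subgraphs not in $\mc{D}$, so $\nu_{\mc{D}}(\fan(D,S,k))\ge k$. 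Here one must be slightly careful that $\Fan(D,S)\subseteq\mc G$ genuinely forces $\fan(D,S,k)$ itself (not merely some topological minor) to lie in $\mc G$—but this holds since $\mc G$ is an ideal and $\fan(D,S,k)$ is a topological minor of $\fan(D,S,k')$ for $k'\ge k$, equivalently we can just take the graph for which $\Fan(D,S)$ is defined as the ideal it generates.

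The substantial part is $(4)\Rightarrow(3)$: assuming $\Fan(D,S)\not\subseteq\mc G$ for all $\abs{S}\le 1$ and $\mc K_{2,*}\not\subseteq\mc G$, show $\eta_{\mc D}$ is bounded on $\mc G$. The natural route is a Ramsey/finiteness argument on excluded structures. Because $\mc G$ is an ideal, the hypothesis that $\Fan(D,\emptyset)\not\subseteq\mc G$ means $\fan(D,\emptyset,k)=kD$ (disjoint union of $k$ diamonds) is \emph{not} in $\mc G$ for some fixed $k$, and similarly $\fan(D,\{v\},k)$ is excluded for each of the finitely many choices of $v\in V(D)$ up to symmetry, and $K_{2,m}$ is excluded for some fixed $m$. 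Fix a graph $G\in\mc G$. First, $G$ cannot contain many vertex-disjoint diamonds, nor a single vertex meeting many otherwise-disjoint diamonds (that would embed some $\fan(D,\{v\},k)$), so by a standard sunflower/Helly-type bound the "diamond part" of $G$ is confined to a bounded-size set $X$ of vertices; deleting $O(1)$ vertices, and hence $O(1)$ edges if degrees are controlled, should make $G$ diamond-free. The degree control comes from the $K_{2,m}$-exclusion: a vertex of huge degree in a graph with no large vertex-disjoint-diamond packing still tends to create either a large $K_{2,m}$ topological minor or many diamonds through a common vertex. So the key steps are: (i) bound the number of "essentially disjoint" diamonds using the $\Fan(D,\emptyset)$-exclusion; (ii) bound, for each vertex, the number of diamonds through it using the $\Fan(D,\{v\})$-exclusions; (iii) combine (i) and (ii) via a sunflower argument to trap all diamonds in a bounded vertex set $X$; (iv) use the $\mc K_{2,*}$-exclusion to bound the number of edges incident with $X$ (or more precisely to bound the edge-boundary one must cut), yielding a partition $(X\text{-singletons},\,V(G)\setminus X)$ witnessing $\eta_{\mc D}(G)=O(1)$.

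I expect step (iii)–(iv) to be the main obstacle: making precise how "all diamonds live in a bounded set $X$" translates into a bounded \emph{edge} cut requires ruling out the possibility that $X$ has unbounded degree into $V(G)\setminus X$ without creating a forbidden topological minor, and this is exactly where the $\mc K_{2,*}\not\subseteq\mc G$ hypothesis (together with the diamond-free structure of $G-X$, which near $X$ forces tree-like attachments) must be leveraged carefully—essentially an analogue of the forest case but with "diamond-free" in place of "forest" as the target class, so one should be able to invoke or adapt the corresponding lemmas established for Theorem~\ref{thm:forest_edge}.
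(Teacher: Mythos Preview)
Your chain of implications and the arguments for $(3)\Rightarrow(2)\Rightarrow(1)$ and $(1)\Rightarrow(4)$ are correct and match the paper (which packages them via a general Theorem~\ref{thm:edge}, citing Observation~\ref{obs:basic} and Lemma~\ref{l:lower}).

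The gap is in $(4)\Rightarrow(3)$. Your plan is to produce a bounded set $X$ meeting every diamond subdivision and then take the partition consisting of the singletons in $X$ together with $V(G)\setminus X$; the cost is the number of edges incident with $X$, which you propose to bound via the $\mc K_{2,*}$-exclusion. You correctly flag this as the main obstacle, but you do not resolve it, and the difficulty is genuine: knowing only that every diamond subdivision hits $X$ \emph{once}, there is no reason the degrees of vertices in $X$ into $V(G)\setminus X$ should be bounded, and ``diamond-freeness of $G-X$ forces tree-like attachments near $X$'' is not a usable structural statement. Concrete attempts to build counterexamples tend to run into the $K_{2,*}$-exclusion, but converting that observation into a proof would itself require a substantial argument that your outline does not supply.

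The paper's route differs at exactly this point. Using bounded tree-width (from excluding $\ell$ disjoint diamonds as a minor) together with a two-level Helly argument on tree-decompositions (Lemma~\ref{l:EP2} and Corollary~\ref{c:tworoots}), one obtains a bounded $X$ with the stronger property that every diamond subdivision meets $X$ in \emph{at least two} vertices. This changes the endgame entirely: rather than isolating each $x\in X$ as a singleton, one shows that for each $x$ there are only boundedly many edge-disjoint paths from $x$ to $X-\{x\}$ internally avoiding $X-\{x\}$---if there were many, one could extract either a $K_{2,\ell}$ subdivision or a diamond subdivision meeting $X$ only in $x$, contradicting the two-vertex property. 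Menger's theorem then gives a small edge cut separating $x$ from $X-\{x\}$, and these cuts assemble into a vertex partition whose parts each contain at most one vertex of $X$; by the two-vertex property each part is diamond-free, and the number of crossing edges is bounded. So the correct target is not to bound the edge-boundary of $X$ in $G$, but to separate the vertices of $X$ from one another with small edge cuts.
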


The analogue of Theorem~\ref{thm:forest_vertex} also holds.

\begin{restatable}{THM}{diamondvertex}\label{thm:diamond_vertex}
	Let $\mathcal{G}$ be an ideal.
	Then,  $\kappa_{\mc{D}}$ is bounded on $\mc{G}$ if and only if
	$\Fan(D,S) \not\subseteq \mathcal G$ for every $S \subseteq V(D)$ such that $|S| \leq 1$, or $S$ consists of the pair of degree-$2$ vertices of $D$.
\end{restatable}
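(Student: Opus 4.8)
The plan is to follow the same architecture as the proofs of Theorems~\ref{thm:forest_edge}--\ref{thm:diamond_edge}, separating the easy direction (the listed ideals witness unboundedness) from the hard structural direction (forbidding them forces boundedness). For the forward direction, I would first record that each of the relevant ideals has unbounded $\kappa_{\mc D}$: for $\Fan(D,\emptyset)$ (disjoint copies of $D$) and $\Fan(D,\{v\})$ the diamond itself is not $D$-free, so any edge-partition into $D$-free subgraphs must break every copy of $D$; a counting/overlap argument shows that killing $k$ disjoint diamonds forces $\Omega(k)$ branch vertices, and under topological-minor manipulations (using that $\kappa_{\mc D}$ does not decrease when passing to graphs containing these as topological minors — the $\kappa$-analogue of Proposition~\ref{prop:topteb}) this propagates to the whole ideal. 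The one genuinely new case compared to Theorem~\ref{thm:diamond_edge} is the exception ``$S$ = the two degree-$2$ vertices of $D$'': here $\fan(D,S,k)$ is (a subdivision of) $K_{2,k}$ with the two branch vertices joined by an edge, and every subgraph of it is in fact diamond-free once one deletes that single edge, or can be covered by two $D$-free subgraphs sharing only the two degree-$2$ vertices — so $\kappa_{\mc D}$ stays bounded (by $2$) on $\mc K_{2,*}$-type configurations decorated this way, which is exactly why this $S$ must be allowed, unlike in the edge-brittleness theorem.

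For the hard direction, assume $\mc G$ is an ideal with $\Fan(D,S)\not\subseteq\mc G$ for all $S\subseteq V(D)$ with $|S|\le 1$ and for $S$ the two degree-$2$ vertices, and suppose for contradiction that $\kappa_{\mc D}$ is unbounded on $\mc G$. The first reduction is to relate $\kappa_{\mc D}$ to $e_{\mc D}$ on $\mc G$: by Observation~\ref{obs:basic} we always have $\kappa_{\mc D}(G)\le 2e_{\mc D}(G)$, but here we need the reverse-flavored statement, that unbounded $\kappa_{\mc D}$ together with bounded $e_{\mc D}$ is impossible — equivalently, a structural dichotomy à la Theorem~\ref{thm:diamond_edge}. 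Concretely, I would argue: if $\mc K_{2,*}\subseteq\mc G$ then, since $\mc K_{2,*}=\Fan(K_3,S)$ for $|S|=2$ and in $K_{2,n}$ one can route all edges into two stars meeting only at the two degree-$n$ vertices, we would need to check that this does NOT give unbounded $\kappa_{\mc D}$ — indeed it does not, $\kappa_{\mc D}(K_{2,n})\le 2$ — so unlike in Theorem~\ref{thm:diamond_edge}, containing $\mc K_{2,*}$ is harmless for $\kappa_{\mc D}$, which is why it is absent from the hypothesis. Hence the remaining possibility under ``unbounded $\kappa_{\mc D}$'' is that $e_{\mc D}$ is unbounded on $\mc G$, and then Theorem~\ref{thm:diamond_edge} (items 2 $\Leftrightarrow$ 4) forces one of the forbidden $\Fan(D,S)$ with $|S|\le 1$, or $\mc K_{2,*}$, to lie in $\mc G$. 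The first disjunct contradicts the hypothesis directly; for the second, I must show $\mc K_{2,*}\subseteq\mc G$ alone still does not yield unbounded $\kappa_{\mc D}$, closing the loop — but a subtlety is that $\mc G$ might contain $\mc K_{2,*}$ together with extra structure, so I actually need: whenever $\kappa_{\mc D}$ is large, one finds many ``diamond-like'' obstructions that are not merely bipartite $K_{2,n}$'s, and the Fan-surgery lemma (used for Theorem~\ref{thm:diamond_edge}) extracts from them a $\Fan(D,S)$ with $S$ among the allowed-for-$\kappa$ but forbidden-by-hypothesis sets, i.e. $|S|\le 1$.

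The main obstacle, and where most of the work lies, is the last extraction step: given a graph $G\in\mc G$ with $\kappa_{\mc D}(G)$ huge, produce one of the four named ideals as a topological minor. I would handle this by contrapositive Ramsey-type pigeonholing on an optimal edge-partition: a vertex is a \emph{boundary vertex} if it meets edges of $\ge 2$ parts; if there are many boundary vertices, either many lie in a single part's ``halo'' (giving, after cleaning, a fan over $\le 1$ vertex or over a degree-$2$-pair of a diamond), or they are spread across many parts, and because each non-$D$-free would-be-merge is blocked by an actual diamond using that boundary vertex, one locates $k$ near-disjoint diamonds sharing a controlled intersection pattern — and the only intersection patterns that keep the configuration inside $\mc G$'s forbidden list after topological reduction are precisely $\Fan(D,S)$, $|S|\le1$, and the degree-$2$-pair case (the latter being exactly the pattern that is cheap for $\kappa$). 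Throughout I would lean on Proposition~\ref{prop:topteb}-type monotonicity to freely pass to subdivisions/topological minors, on the already-proved Theorem~\ref{thm:diamond_edge} to avoid redoing the $e_{\mc D}$ analysis from scratch, and on small finite case checks (there are only a handful of subsets $S\subseteq V(D)$ up to the automorphisms of $D$) to verify which fans are $\kappa$-expensive versus $\kappa$-cheap.
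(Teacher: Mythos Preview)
Your plan rests on a misreading of the theorem. You treat ``$S$ = the pair of degree-$2$ vertices of $D$'' as an \emph{allowed} configuration for which $\kappa_{\mc D}$ stays bounded, and you spend effort arguing that $\kappa_{\mc D}\le 2$ on $\Fan(D,\{u,w\})$. In fact the theorem lists this $S$ among the \emph{forbidden} ideals: one must have $\Fan(D,\{u,w\})\not\subseteq\mc G$ for $\kappa_{\mc D}$ to be bounded. And indeed $\kappa_{\mc D}$ is unbounded on $\Fan(D,\{u,w\})$: Lemma~\ref{l:lower} applies directly, since $\{u,w\}$ is independent in $D$, $D\setminus\{u,w\}$ is connected (it is the edge $xy$), and $D\notin\mc D$, so $\kappa_{\mc D}(\fan(D,\{u,w\},\ell))\ge\ell$. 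Your description of $\fan(D,\{u,w\},k)$ is also off: the shared vertices $u,w$ are the \emph{non}-adjacent pair, so the fan is $K_{2,2k}$ together with a perfect matching on the size-$2k$ side, not $K_{2,k}$ with an extra edge between the two hubs. With the role of this case reversed, your forward direction collapses and your backward direction loses its organising principle.

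Even setting that aside, the reduction to Theorem~\ref{thm:diamond_edge} does not close the backward direction. From $\kappa_{\mc D}$ unbounded you correctly get $e_{\mc D}$ unbounded via Observation~\ref{obs:basic}, and then Theorem~\ref{thm:diamond_edge} yields either some $\Fan(D,S)\subseteq\mc G$ with $|S|\le 1$ (contradiction) or $\mc K_{2,*}\subseteq\mc G$. But $\mc K_{2,*}\subseteq\mc G$ neither contradicts the hypothesis of Theorem~\ref{thm:diamond_vertex} nor forces $\kappa_{\mc D}$ to be bounded on $\mc G$, since the graphs witnessing large $\kappa_{\mc D}$ need not lie in $\mc K_{2,*}$ at all. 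You acknowledge this and fall back on a vague ``Ramsey-type pigeonholing on an optimal edge-partition,'' but that sketch contains no mechanism for producing $\fan(D,\{u,w\},\ell)$ as a topological minor, which is the case you must handle. The paper proceeds quite differently: it proves a general criterion (Theorem~\ref{t:vbrittle}) via Corollary~\ref{c:tworoots}, $X$-bridges, and Lemma~\ref{lem:manypath} (crucially using that $K_{2,3}$ is itself a subdivision of $D$, so $K_{2,3}\notin\mc D$), reducing the question to classifying $D$-traps; a short argument (Lemma~6.2) then shows every $D$-trap $(J,S)$ has $J\cong D$, and the admissible $S$ are exactly $\emptyset$, singletons, and the degree-$2$ pair.
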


We were unable to obtain an analogue of Theorems~\ref{thm:forest_edge} and~\ref{thm:diamond_edge} for outerplanar graphs. The difficulty partially stems from the fact  that $\eta_{\mc{O}}$ is no longer tied to the other parameters, as the next proposition shows.

\begin{restatable}{PROP}{example}\label{prop:example}
For every integer $\ell >0 $ there exists a graph $G=G(\ell)$ such that $e_{\mc{O}}(G)=1$ and $\eta_{\mc{O}}(G) \geq \ell$.  
\end{restatable}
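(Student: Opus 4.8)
The plan is to exhibit, for each $\ell$, a graph $G=G(\ell)$ that is one edge away from being outerplanar but whose $\mc{O}$-edge-brittleness is large. The natural building block is $K_4$: it is planar but not outerplanar, and deleting any one of its edges yields an outerplanar graph. So a single $K_4$ already has $e_{\mc{O}}=1$; the task is to amplify $\eta_{\mc{O}}$ while keeping $e_{\mc{O}}$ pinned at $1$. The idea is to take many copies of $K_4$ and glue them together along a common structure so that (a) a single carefully chosen edge deletion destroys all of them simultaneously, but (b) any vertex partition into outerplanar-inducing parts must cut at least $\ell$ edges.

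Concretely, I would take $\ell$ copies of $K_4$ and identify them along a shared triangle $T$ (or along a shared edge). If $G$ is obtained from $\ell$ copies of $K_4$ by identifying a fixed triangle $T=xyz$ in all of them — so $G$ has vertices $x,y,z$ together with $\ell$ private ``apex'' vertices $a_1,\dots,a_\ell$, each $a_i$ adjacent to all of $x,y,z$ — then deleting the single edge $xy$ makes $G$ outerplanar (it becomes a ``book'' of triangles sharing the path $xzy$, which is outerplanar), so $e_{\mc{O}}(G)\le 1$, and $e_{\mc{O}}(G)\ge 1$ since $G$ itself contains $K_4$. For the lower bound on $\eta_{\mc{O}}$, consider any partition $(V_1,\dots,V_n)$ of $V(G)$ with each $G[V_i]$ outerplanar. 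Since $G[\{x,y,z,a_i\}]=K_4$ is not outerplanar, for each $i$ at least one of $x,y,z,a_i$ lies in a different part from the others; tracking how many edges of the copy $G[\{x,y,z,a_i\}]$ are cut, one checks that at least one edge of each $K_4$-copy is cut. The copies pairwise share only edges of $T$, so if no cut edge lies in $T$ we immediately get $\ell$ cut edges (one per private copy). If some edge of $T$, say $xy$, is cut — i.e., $x$ and $y$ lie in different parts — then for each $i$ the vertex $a_i$ is in the part of $x$ or not, and the edge $a_ix$ or $a_iy$ is cut, again giving $\ell$ cut edges (the edges $a_ix$, or $a_iy$, are all distinct). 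A short case analysis handling two cut edges of $T$ finishes it, so $\eta_{\mc{O}}(G)\ge \ell$.

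The main obstacle is the lower-bound argument: one must argue robustly that \emph{every} outerplanar vertex partition cuts many edges, ruling out clever partitions that cut few edges of $T$ while ``absorbing'' many apex vertices into large outerplanar parts. The delicate point is that a part $V_i$ could contain $x$, $z$, and several apices $a_{j}$ — but $G[\{x,z,a_{j_1},a_{j_2}\}]$ is itself a $K_4$ minus the edge $a_{j_1}a_{j_2}$, which is outerplanar, and with three apices $G[\{x,z,a_{j_1},a_{j_2},a_{j_3}\}]=K_{2,3}$ plus the edge $xz$, which is \emph{not} outerplanar; so a part containing both $x$ and $z$ holds at most two apices, and this bounds how much a single part can absorb. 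Pushing this counting through carefully (it may be cleanest to choose the gluing so that the relevant forbidden configuration $K_{2,3}$ or $K_4$ appears as soon as a part gets too big) is where the real work lies; everything else is a direct check against the definitions and the fact that $K_4\notin\mc{O}$.
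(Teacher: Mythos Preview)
Your construction fails at the step you treat as routine: the claim that deleting the single edge $xy$ makes $G$ outerplanar is false as soon as $\ell\geq 2$. After deleting $xy$ the apices $a_1$ and $a_2$ are each still adjacent to all of $x,y,z$, so $G\setminus xy$ contains $K_{2,3}$ on the bipartition $\{a_1,a_2\}$ versus $\{x,y,z\}$ and is therefore not outerplanar. (For $\ell=2$ one can also just count: $G\setminus xy$ has $5$ vertices and $8>2\cdot 5-3$ edges.) More is true: for every pair $i\neq j$ the vertices $a_i,a_j,x,y,z$ span a $K_{2,3}$ that uses no triangle edge at all, so deleting edges of $T$ does nothing to these obstructions. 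Since apices are pairwise non-adjacent, any set $F$ of fewer than $\ell-1$ deleted edges leaves at least two apices with all three edges to $\{x,y,z\}$ intact, and hence a surviving $K_{2,3}$; thus $e_{\mc{O}}(G)\geq \ell-1$. Your graph has edit distance growing linearly with $\ell$, the opposite of what is needed. Gluing along a shared edge instead of a shared triangle runs into the same problem.

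The paper's construction is quite different. It starts from a single $K_4$ equipped with a Hamiltonian path $P_1$, and repeatedly applies an operation $\sigma$ that inserts one new vertex between each pair of consecutive vertices of the current path $P_{\ell-1}$, the new path $P_\ell$ threading through old and new vertices alternately. One fixed non-path edge $f$ of the original $K_4$ has the property that $(G_1\setminus f,P_1)$ is outerplanar with $P_1$ on the outer face boundary, and $\sigma$ preserves this ``hemmed outerplanar'' property; hence $e_{\mc{O}}(G_\ell)=1$ for all $\ell$. For the lower bound the key lemma is that whenever $G[V(P)]\notin\mc{O}$ one has $\eta_{\mc{O}}(\sigma(G,P))\geq \eta_{\mc{O}}(G)+1$: in any valid partition of the new graph the vertices of $V(P)$ cannot all lie in one part, so some edge of the new path $P'$ is cut, and restricting the partition to $V(G)$ certifies $\eta_{\mc{O}}(G)\leq \eta_{\mc{O}}(G')-1$. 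It is this recursive amplification along a path, rather than gluing many copies of a non-outerplanar graph, that keeps the edit distance pinned at $1$.
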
 

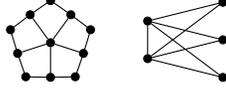
\begin{figure}
  \centering
  \tikzstyle{v}=[circle, draw, solid, fill=black, inner sep=0pt, minimum width=3pt]
  \begin{tikzpicture}[scale=.4]
    \draw(0,0) node[v](x1){};
        \foreach \i in {0,3,4}{
          \draw (\i*72+90:1.4) node[v] (v\i){};
          \foreach \j in {1}{
            \draw  (\i*72+90+46-\j*10:.15*\j+1) node[v] (v\i-\j){};
            \draw (x\j)--(v\i-\j);
            \draw plot [smooth] coordinates { (v\i) (v\i-\j) (\i*72+90+72:1.4)};
          }
        }
        \foreach \i in {1,2}{
          \draw (\i*72+90:1.4) node[v] (v\i){};
          \foreach \j in {1}{
            \draw  (\i*72+90+26+\j*10:.15*\j+1) node[v] (v\i-\j){};
            \draw (x\j)--(v\i-\j);
            \draw plot [smooth] coordinates { (v\i) (v\i-\j) (\i*72+90+72:1.4)};
          }
        }
      \end{tikzpicture}
      $\quad$
   \begin{tikzpicture}[yscale=2]
     \node[v] at (0,.375) (x){};
     \node[v] at (0,.125) (y){};
     \foreach \y in {0,0.25,.5}{
       \draw (x)--(1,\y) node[v]{} -- (y);
     }
     \draw (x)--(y);
     \end{tikzpicture}
  
  \caption{Two graphs $W_5^+$ and $K_{2,3}^+$.}
  \label{fig:wk}
\end{figure}
We were, however, able to characterize the minimal ideals with unbounded $\mc{O}$-vertex-brittleness. For an integer $k \geq 3$, let $W^{+}_k$ denote the graph obtained from the  wheel on $k+1$ vertices by subdividing every edge of the rim.  Equivalently, $W^{+}_k$ is obtained from a cycle on $2k$ vertices by adding an extra vertex  adjacent to all the vertices of some independent set of size $k$ in the cycle. Let $K^{+}_{2,3}$ denote the graph obtained from $K_{2,3}$ by adding an edge joining the degree-$3$ vertices, see Figure~\ref{fig:wk}.

\begin{restatable}{THM}{outervertex}\label{thm:outer_vertex}
  Let $\mathcal{G}$ be an ideal.
  Then, $\mathcal{G}$ has bounded $\kappa_{\mc{O}}$ if and only if
  it contains none of the following ideals.
  \begin{itemize}
  \item   $\Fan(K_4,S)$ with $S \subseteq V(K_4)$, $|S| \leq 1$.
  \item $\Fan(K_{2,3},S)$ with $S \subseteq V(K_{2,3})$ such that $|S| \leq 1$ or $S$ consists of two degree-$2$ vertices.
   \item $\Fan(K^{+}_{2,3},S)$ with $S \subseteq V(K_{2,3}^+)$ consisting of all degree-$2$ vertices.
  \item  $\Fan(W^{+}_k,S)$ where $k \geq 3$, and $S \subseteq V(W^{+}_k)$ is the set of all degree-$2$ vertices.
  \end{itemize}
\end{restatable}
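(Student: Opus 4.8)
\emph{Setup.} Throughout I use that a graph is outerplanar if and only if it contains no subdivision of $K_4$ and no subdivision of $K_{2,3}$ as a subgraph (this is the ``topological'' form of the forbidden-minor characterisation of outerplanarity, valid since $K_4$ and $K_{2,3}$ have maximum degree at most $3$). The proof has two halves: the ``only if'' direction, in which each listed ideal is shown to force $\kappa_{\mc O}$ unbounded, and the substantially harder ``if'' direction.

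\emph{The forcing direction.} It is enough to prove $\kappa_{\mc O}(\fan(H,S,k))\ge k$ for each listed pair $(H,S)$, since $\fan(H,S,k)\in\Fan(H,S)$. I would first observe that all four types share three features: (i) $H[S]$ has no edge; (ii) $H-S$ is connected and spans $V(H)\setminus S$; and (iii) $H$ is not outerplanar. All three are a finite case inspection: for $K_4$ delete at most one vertex; for $K_{2,3}$ delete at most one vertex, or two nonadjacent degree-$2$ vertices (leaving a path); for $K^+_{2,3}$ delete the three degree-$2$ vertices (leaving the edge $ab$); for $W^+_k$ delete the rim-subdivision vertices (leaving the star of hub plus spokes), its non-outerplanarity coming from the $K_4$-subdivision on the hub and three consecutive spoke-ends. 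Now fix $k$, take an optimal partition of $E(\fan(H,S,k))$, and let $W$ be the set of vertices meeting edges of two or more parts, so $|W|=\kappa_{\mc O}(\fan(H,S,k))$. The $k$ copies of $H$ pairwise intersect only in $S$ and, by (i), share no edge, so every vertex outside $S$ and every edge belongs to a unique copy; hence if $|W|<k$, some copy $H_i$ meets $W$ only inside $S$. Then each vertex of $V(H_i)\setminus S$ meets edges of a single part, and by (ii) all of $V(H_i)\setminus S$ lies in one common part $P$; since $H_i$ has no edge inside $S$, every edge of $H_i$ meets $V(H_i)\setminus S$ and so lies in $P$. Thus the subgraph induced by $P$ contains a copy of $H$, contradicting (iii). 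Hence $\kappa_{\mc O}(\fan(H,S,k))\ge k$, so each listed ideal has unbounded $\kappa_{\mc O}$.

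\emph{The boundedness direction.} I would prove the contrapositive: if $\kappa_{\mc O}$ is unbounded on the ideal $\mc G$, then $\mc G$ contains one of the listed ideals. Pick $G_i\in\mc G$ with $\kappa_{\mc O}(G_i)\to\infty$; since $\mc G$ is closed under topological minors, it suffices to exhibit, inside graphs of large $\kappa_{\mc O}$, topological-minor copies of $\fan(H,S,t)$ for a single fixed listed $(H,S)$ and arbitrarily large $t$. The first step is to reduce to $2$-connected graphs via the block--cut tree: if no block has large $\kappa_{\mc O}$ and $\mc G$ avoids all the empty- and single-vertex fans, then $\kappa_{\mc O}(G)$ is governed by how the non-outerplanar blocks are arranged (each such block contains a $K_4$- or $K_{2,3}$-subdivision), and a Ramsey argument on the tree either spreads many of them apart, giving $\Fan(K_4,\emptyset)$ or $\Fan(K_{2,3},\emptyset)$, or concentrates them at boundedly many cut vertices, giving $\Fan(K_4,\{v\})$ or $\Fan(K_{2,3},\{v\})$; a sunflower/cleanup step makes the shared vertices of the many obstructions literally coincide. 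For $2$-connected $G$ I would then analyse the tree of $3$-connected components (the SPQR/Tutte decomposition), using that a $2$-connected graph is outerplanar precisely when it has no rigid ($3$-connected) node and no parallel node with three ``fat'' branches. This yields a trichotomy: a parallel node between $u,v$ with many fat branches of which one is very fat, or two carry extra structure, yields $\Fan(K_{2,3},S)$ with $S$ the two degree-$2$ vertices, or $\Fan(K^+_{2,3},S)$; the plain case of many fat but structureless branches is essentially $K_{2,n}$, which has $\kappa_{\mc O}=2$ and so poses no threat; and a long ``cyclic'' chain of $2$-separations each hanging a non-outerplanar gadget seen from one common vertex yields a $W^+_k$-fan. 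Ramsey-type and sunflower arguments again align the attachment sites so the gadgets assemble into a genuine $\fan(H,S,t)$, and the detailed estimates should follow the pattern of the proofs of Theorems~\ref{thm:forest_vertex} and~\ref{thm:diamond_vertex}.

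\emph{The main obstacle.} I expect the crux to be the cyclic regime and its output $\Fan(W^+_k,S)$: one must correctly recognise that the non-outerplanar obstructions lie around a common cycle ``as seen from a hub'', pin down the hub and the shared degree-$2$ vertices exactly, and then argue that no cheaper edge-partition exists. Intertwined with this is the bookkeeping forced by the fact that $\{\Fan(W^+_k,S)\}_{k\ge 3}$ is an infinite antichain of ideals: the extraction cannot simply pigeonhole over finitely many shapes, so in the cyclic case it must either repeatedly produce a fan of one fixed shape with growing number of copies, or show that the cyclic obstruction can always be pinned to a bounded girth parameter $k$ while still keeping the number of copies large. Managing this case — both the structural detection of the hub/cycle pattern and this reconciliation with the infinite list — is, I believe, the principal difficulty; the $2$-connected reduction and the parallel-node analysis should be comparatively routine by comparison.
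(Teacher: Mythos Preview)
Your forcing direction is correct and essentially reproduces Lemma~\ref{l:lower}.

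Your boundedness direction, however, has a genuine gap at exactly the place you flag. The paper does \emph{not} proceed via the block--cut tree and an SPQR analysis; it goes through a uniform machinery (Corollary~\ref{c:tworoots} and Theorem~\ref{t:vbrittle}) that sidesteps the infinite antichain issue entirely, and this is the key idea you are missing. Concretely: using bounded tree-width (available since $K_{2,3}$ is planar) together with an Erd\H{o}s--P\'osa argument, one first produces for every $G\in\mc G$ a set $X\subseteq V(G)$ of size at most some absolute $N$ such that every subgraph of $G$ not in $\mc O$ meets $X$ in at least two vertices. Each non-outerplanar $X$-bridge $B$ then gives a $K_4$- or $K_{2,3}$-``trap'' $(J,S)$ with $S\subseteq X\cap V(B)$, so $|S|\le N$. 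Lemma~\ref{l:boundtrap} bounds $|V(J)|$ in terms of $|S|$, whence only \emph{finitely many} trap shapes occur --- in particular only $W^+_k$ with $k\le N$ are relevant. One can therefore pigeonhole over this finite list, and the infinite family $\{\Fan(W^+_k,S)\}_{k\ge 3}$ never causes trouble: the bound on $N$ comes only from the $|S|\le 1$ fans, and only afterwards does one invoke the absence of the finitely many remaining $\Fan(J,S)$ from $\mc G$. The proof is then completed by classifying the traps (Lemma~\ref{l:outer_trap}) and using Lemma~\ref{lem:manypath} to chop each bad bridge.

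Your SPQR sketch does not supply any analogue of this bootstrapping: you would need, before pigeonholing over shapes, some a priori reason why the ``cyclic'' configurations you describe have bounded girth parameter~$k$, and you explicitly leave this open. Note also that your appeal to ``the pattern of the proofs of Theorems~\ref{thm:forest_vertex} and~\ref{thm:diamond_vertex}'' is misplaced: in the paper those theorems are likewise proved via Theorem~\ref{t:vbrittle} and trap classification, not by block--cut or SPQR arguments, so there is no such pattern to follow.
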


We finish this section with the proof that taking topological minors does not increase our measures of distance to an ideal, except possibly for edge-brittleness.

It is convenient to present our proof using the language of embeddings. It is easy to see that
a subdivision of a graph $G'$ is isomorphic to a subgraph of a graph  $G$ if and only if there exists a map $\phi$ defined on $V(G') \cup E(G')$ such that  $\phi$ maps $V(G')$ injectively into $V(G)$, and $\phi$ maps the edges of $G'$ into internally disjoint paths, so that $\phi(uv)$ has ends $\phi(u)$ and $\phi(v)$ for every $uv \in E(G')$. We refer to a map with these properties as an \emph{embedding of $G'$ into $G$}, and write $\phi: G' \emb G$ to denote that $\phi$ is such an embedding. 
For a subgraph $F$ of $G'$, 
let $\phi(F)$ denote the subgraph of $G$ such that $V(\phi(F)) = \{ \phi(v) : v \in V(F)\} \cup (\bigcup_{e \in  E(F)}V(\phi(e)))$ and  $E(\phi(F))  = \bigcup_{e \in  E(F)}E(\phi(e))$. Note that $\phi(F)$ is isomorphic to a subdivision of $F$. 

\begin{PROP}\label{prop:topteb}
Let $\C$ be an ideal.
If $G'$ is a topological minor of a graph $G$, then 
\[e_{\C}(G') \le e_{\C}(G),  \kappa_{\C}(G') \le \kappa_{\C}(G),   \text{ and } \nu_{\C}(G') \le \nu_{\C}(G) .\]
\end{PROP}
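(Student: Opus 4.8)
The plan is to fix an embedding $\phi\colon G' \emb G$ and, for each of the three parameters, transfer an optimal witnessing structure from $G$ back along $\phi$ to $G'$, using only that $\C$ is closed under taking topological minors (together with the trivial fact that subgraphs of subdivisions of $F$ which arise here are themselves topological-minor-related to subgraphs of $F$).

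For the edit distance, I would start with $F \subseteq E(G)$ with $|F| = e_{\C}(G)$ and $G \setminus F \in \C$. Let $F' = \{ e \in E(G') : E(\phi(e)) \cap F \neq \emptyset \}$; then $|F'| \le |F|$, and I claim $G' \setminus F' \in \C$. Indeed $\phi$ restricts to an embedding of $G' \setminus F'$ into $G \setminus F$ (every path $\phi(e)$ for $e \notin F'$ survives intact in $G \setminus F$, and the images of distinct such edges are still internally disjoint), so $G' \setminus F'$ is a topological minor of $G \setminus F \in \C$, hence in $\C$ since $\C$ is an ideal. This gives $e_{\C}(G') \le e_{\C}(G)$.

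For $\bar\C$-capacity, take edge-disjoint subgraphs $H_1, \ldots, H_\ell$ of $G'$ with $\ell = \nu_{\C}(G')$ and each $H_i \notin \C$. Set $H_i' = \phi(H_i)$; these are subgraphs of $G$, they are edge-disjoint because the $E(\phi(e))$ are pairwise internally disjoint and the $H_i$ share no edges, and each $H_i'$ is a subdivision of $H_i$, so $H_i$ is a topological minor of $H_i'$ — since $\C$ is closed under topological minors and $H_i \notin \C$, also $H_i' \notin \C$. Thus $\ell$ edge-disjoint non-$\C$ subgraphs live in $G$, giving $\nu_{\C}(G) \ge \ell = \nu_{\C}(G')$. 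The vertex-brittleness case is the analogous but most delicate one: given a partition $(E_1, \ldots, E_n)$ of $E(G')$ witnessing $\kappa_{\C}(G')$, I push it forward to the partition of $E(G)$ in which the class of an edge $f \in E(G)$ is determined by which $E_i$ the (unique, if any) edge $e \in E(G')$ with $f \in E(\phi(e))$ belongs to, and edges of $G$ not in any $\phi(e)$ are put into their own singleton classes (which are fine since $K_2 \in \C$ — actually one must be slightly careful here, but singleton-edge subgraphs $K_2$ are topological minors of anything so this is safe, or alternatively merge them arbitrarily). Each nonsingleton class is $\phi$ of the corresponding $G'[E_i]$-type subgraph up to subdivision, hence a subdivision of something whose subgraphs are topological minors of $G'[E_i] \in \C$, so it lies in $\C$; and the only vertices of $G$ incident to edges of two distinct classes are images $\phi(v)$ of vertices $v$ of $G'$ incident to two distinct $E_i$'s — the internal vertices of the paths $\phi(e)$ each see only one class. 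So the number of "bad" vertices does not increase, giving $\kappa_{\C}(G') \ge \kappa_{\C}(G)$... wait, the inequality goes the right way: the pushed-forward partition of $E(G)$ has at most $\kappa_{\C}(G')$ bad vertices, so $\kappa_{\C}(G) \le \kappa_{\C}(G')$ — which is the wrong direction. Hence for vertex-brittleness I must instead pull back: start from an optimal partition of $E(G)$, restrict it to the edge sets $E(\phi(e))$, and observe each such path lies (essentially) in one class after possibly a harmless refinement; the class assigned to $e \in E(G')$ is read off from that. The subgraph of $G'$ induced by edges mapped into class $i$ is a topological minor of the class-$i$ subgraph of $G$ (restrict $\phi$), hence in $\C$; and a bad vertex of $G'$ maps to a bad vertex of $G$. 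This is the step I expect to be the main obstacle, precisely because a single path $\phi(e)$ could have its edges split among several classes of the $G$-partition, so I will need to argue that one may assume each $\phi(e)$ is monochromatic — either by a direct exchange argument (recoloring all of $\phi(e)$ to the color of one of its edges does not create new bad vertices, since the internal vertices of $\phi(e)$ have degree $2$) or by noting that an optimal partition of $G$ can always be taken with this property. Once that normalization is in hand, the pullback is routine and all three inequalities follow uniformly.
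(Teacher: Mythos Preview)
Your treatments of $e_{\C}$ and $\nu_{\C}$ are correct and match the paper. For $\kappa_{\C}$ you are right that the push-forward goes the wrong way and that pull-back is needed, and you correctly isolate the main obstacle: a path $\phi(e)$ may straddle several classes of the optimal partition of $E(G)$. However, your proposed fix --- recoloring each $\phi(e)$ to be monochromatic --- does not work as stated, because this recoloring can destroy the property that each class induces a subgraph in $\C$. For instance, with $\C=\mc{A}$, $G'=K_3$, and $G=C_6$ (each edge of $K_3$ subdivided once), the optimal partition has $E_1$ a $5$-edge path and $E_2$ a single edge; if the unique bichromatic $\phi(e')$ is recolored to color~$1$, then $E_1$ becomes all of $C_6$, which is not a forest. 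So after normalization the implication ``$G'[E'_i]$ is a topological minor of the class-$i$ subgraph of $G$, hence in $\C$'' fails, since the class-$i$ subgraph is no longer known to lie in $\C$. You give no argument that some recoloring choice always avoids this, and the alternative you float (``an optimal partition of $G$ can always be taken with this property'') is unproved and not obvious. A secondary point: internal vertices of $\phi(e)$ have degree~$2$ only in $\phi(G')$, not in $G$, so the ``no new bad vertices'' claim needs you to first pass to the subgraph $\phi(G')$.

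The paper handles $\kappa_{\C}$ differently and bypasses normalization altogether: it fixes an arbitrary order $v_1,\ldots,v_n$ of $V(G')$ and assigns to $e'=v_iv_j$ (with $i<j$) the color of the single edge of $\phi(e')$ incident with $\phi(v_i)$ --- the partition of $E(G)$ is never modified. The bad-vertex bound then comes not from ``no new bad vertices appear'' but from an explicit injection into $W$: a bad vertex $v_i$ of the pulled-back partition is charged to $\phi(v_i)$ if $\phi(v_i)\in W$, and otherwise to a bad internal vertex of some $\phi(v_iv_{i'})$ with $i'<i$; the ordering guarantees that distinct $v_i$ receive distinct charges.
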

\begin{proof}
	
	Let $\phi:G' \emb G$ be an embedding. Let $F \subseteq E(G)$ with $|F|=e_{\C}(G)$ be such that $G \setminus F \in \C$. Let $F' = \{f \in E(G') : E(\phi(f)) \cap F \neq \emptyset.\}$. Then there is a restriction of $\phi$ that is an embedding of $G' \setminus F'$ into $G \setminus F$. It follows that $G' \setminus F' \in \C$, and so $e_{\C}(G') \leq |F'| \leq |F| =e_{\C}(G)$. This proves the first inequality.

For the second inequality, let $\mc{E} = (E_1,\ldots, E_k)$ be a partition of $E(G)$ so that the set $W$ of all vertices incident  with edges in distinct parts of $\mc{E}$ satisfies  $|W| = \kappa_\C(G)$. Let $v_1,v_2, \ldots,v_n$ be an arbitrary ordering of $V(G')$, and let  $\mc{E}'=(E'_1,\ldots, E'_k)$ be a partition of $E(G')$ such that for $e'= v_{i}v_{i'} \in E(G')$ with $i < i'$ we have $e' \in E'_s$, where $s$ is chosen so that the unique edge $e$ of $\phi(e')$ incident with $\phi(v_i)$ satisfies $e \in E_s$. Let $W'$ be the set of all vertices of $v_i \in V(G')$ such that $\phi(v_i) \in W$ or there exists $i' < i$ so that some internal vertex of $\phi(v_iv_{i'})$ lies in $W$. It is easy to see that $|W'| \leq |W|$ and any vertex of $G'$ incident   with edges in distinct parts of $\mc{E}'$ lies in $W'$. Thus $\kappa_\C(G') \leq |W'| \leq |W| = \kappa_\C(G)$. This proves the second inequality.

Finally, let $H_1,\ldots,H_{\ell}$ be pairwise edge-disjoint subgraphs of $G'$ such that $H_i \not \in \mc{C}$ and $\ell = \nu_{\C}(G')$. Then  $\phi(H_1),\ldots,\phi(H_\ell)$ are edge-disjoint subgraphs of $G$, and we have $\phi(H_i) \not \in \C$ as $\C$ is an ideal. It follows that $\nu_{\C}(G) \geq \ell  = \nu_{\C}(G')$.
\end{proof}

The bounds in Proposition~\ref{prop:topteb} do not necessarily hold
if $G'$ is a minor of~$G$.
For example, let $G$ be the graph given in Figure~\ref{fig:ex}
and let $G'=G/e$ for the edge $e$ shown in the figure.
The partition of $E(G)$ into three internally disjoint paths 
connecting the degree-$3$ vertices shows that $\kappa_{\mc{A}}(G)=2$.
It is  easy to see that $\kappa_{\mc{A}}(G/e)=3$, $\nu_{\mc{A}}(G)=1$, and $\nu_{\mc{A}}(G/e)=2$. 
The inequality $\eta_{\C}(G') \le \eta_{\C}(G)$ does not necessarily hold even if $G'$ is a topological minor of a graph $G$. 
For example, let $\C$ be the class of all $K_n$-free graphs, let $G'=K_n$  and let $G$ be obtained by replacing one edge of $K_n$ by a path of length two. 
It is easy to see in this case that $\eta_{\C}(G')=n-1$, $\eta_{\C}(G)= 2$.
Similarly an example in Figure~\ref{fig:ex2} shows that $e_{\mathcal O}$ may increase by contracting an edge.

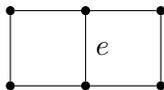
\begin{figure}
  \centering
 \tikzstyle{v}=[circle, draw, solid, fill=black, inner sep=0pt, minimum width=3pt]
  \begin{tikzpicture}
    \foreach \x in {1,2,3}{
        \node[v] (v\x0) at(\x,0)  {};
      }
    \foreach \x in {1,2,3}{
        \node[v] (v\x1) at(\x,1)  {};
      }
    \draw (v10)--(v11)--(v31)--(v30)--(v10);
    \draw (v20)  -- node [right]{$e$} (v21);
  \end{tikzpicture}
  \caption{An example showing that $\kappa_{\mathcal A}$ and $\nu_{\mathcal A}$ may increase by contracting the edge $e$.}
  \label{fig:ex}
\end{figure}
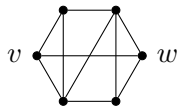
\begin{figure}
  \centering
 \tikzstyle{v}=[circle, draw, solid, fill=black, inner sep=0pt, minimum width=3pt]
 \begin{tikzpicture}[scale=.7]
   \foreach \x in {1,2,4,5}
   {
     \node [v] (v\x) at (60*\x:1) {};
   }
   \foreach \x in {3}
   {
     \node [v,label=left:$v$] (v\x) at (60*\x:1) {};
   }
   \foreach \x in {6}
   {
     \node [v,label=right:$w$] (v\x) at (60*\x:1) {};
   }
   \draw (v1)--(v2)--(v3)--(v4)--(v5)--(v6)--(v1);
   \draw (v6)--(v3);
   \draw (v5)--(v1)--(v4)--(v2);
  \end{tikzpicture}
  \caption{An example showing that $e_{\mathcal O}$ may increase by contracting the edge $vw$.}
  \label{fig:ex2}
\end{figure}

This paper is organized as follows. 
Section~\ref{sec:prelim} reviews necessary definitions.
In Section~\ref{sec:general} we use the Erd\H{o}s-P\'osa property of connected subgraphs of graphs of bounded tree-width to prove several general useful lemmas. Theorems~\ref{thm:forest_edge} and~\ref{thm:diamond_edge} are derived from these lemmas in Section~\ref{sec:edge}. We investigate vertex-brittleness in Section~\ref{sec:vertex} and prove a characterization of minimal ideals with unbounded $\C$-vertex-brittleness for $\C \in \{\mc{A},\mc{D},\mc{O}\}$ in terms of \emph{traps}, a technical notion introduced in that section. In Section~\ref{sec:traps} we finish the proofs of Theorems~\ref{thm:forest_vertex},~\ref{thm:diamond_vertex}, and~\ref{thm:outer_vertex} by classifying the traps for the classes of acyclic, diamond-free,  and outerplanar graphs. The examples of graphs with properties claimed in Proposition~\ref{prop:example} are provided in Section~\ref{sec:example}. Section~\ref{sec:remarks} contains the concluding remarks.

\section{Preliminaries}\label{sec:prelim}
For graphs $G$ and $H$, $G\cup H$ and $G\cap H$ are graphs with 
	$(V(G\cup H), E(G\cup H))=(V(G)\cup V(H),E(G)\cup E(H))$  and $(V(G \cap H), E(G\cap H))=(V(G)\cap V(H), E(G)\cap E(H))$.
Here, $G\cup H$ and $G\cap H$ are called the \emph{union} and the \emph{intersection} of $G$ and $H$, respectively.
If $V(G)\cap V(H)=\emptyset$, then $G\cup H$ is called the \emph{disjoint union}  of $G$ and $H$.

For a set $F$ of edges of $G=(V,E)$, we write $G\setminus F$ for the subgraph $(V,E\setminus F)$, which is the subgraph obtained by deleting edges in $F$.
For a set $X$ of vertices of $G=(V,E)$, we write $G\setminus X$ for the subgraph obtained by deleting all vertices in $X$ and all edges incident with vertices in $X$.

For vertices $u$ and $v$ of a graph $G$, a \emph{path} from $u$ to $v$  is an alternating sequence $v_0e_1v_1\cdots e_k v_k$ of distinct vertices and edges of $G$
such that $k\ge 0$, $v_0=u$, $v_k=v$, and $e_i=v_{i-1}v_i$ for all $i\in\{1,2,\ldots,k\}$.
The \emph{length} of a path is its number of edges.
We sometimes identify such a path with the subgraph whose vertex set is $\{v_0,v_1,\ldots,v_k\}$ and edge set is $\{e_1,e_2,\ldots,e_k\}$.
A set $\mathcal P$ of paths is \emph{internally disjoint}
if no internal vertex of a path in $\mathcal P$
is on a path of another path in $\mathcal P$.

A graph is \emph{connected} if for all vertices $u$ and $v$, there is a path from $u$ to~$v$.
A graph is \emph{$k$-connected} if $\abs{V(G)}>k$ and 
and $G\setminus X$ is connected for all $X\subseteq V(G)$ with $\abs{X}<k$.

A set $X$ of vertices is \emph{independent} if no two vertices are adjacent.

We will use the following version of Menger's theorem in Section~\ref{sec:general}.
\begin{THM}[Menger~\cite{M1927}]\label{thm:Menger}
  Let $S$, $T$ be disjoint sets of vertices in a graph~$G$.
  Then $G$ has $k$ pairwise edge-disjoint paths from $S$ to $T$
  if and only if 
  there exists no set $X$ of vertices such that
  $S\subseteq X$, $T\cap X=\emptyset$, and 
  $G$ has less than $k$ edges having one end in $X$ and the other end not in $X$.
\end{THM}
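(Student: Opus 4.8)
The statement is the edge form of Menger's theorem for two vertex sets, and the plan is to reduce it by a contraction to the classical edge form for two vertices — equivalently, to the max-flow--min-cut theorem — and then treat that case by an augmenting-path argument. One direction is immediate: if $G$ has pairwise edge-disjoint paths $P_1,\dots,P_k$ from $S$ to $T$ and $X$ is any set with $S\subseteq X$ and $T\cap X=\emptyset$, then each $P_i$ runs from a vertex of $X$ to a vertex outside $X$, so it uses an edge $e_i$ with exactly one end in $X$; the $e_i$ are distinct since the $P_i$ are edge-disjoint, so $G$ has at least $k$ edges with one end in $X$ and one end outside. Contrapositively, a set $X$ as in the theorem with fewer than $k$ crossing edges rules out $k$ edge-disjoint $S$--$T$ paths.

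For the converse, assume no such $X$ has fewer than $k$ crossing edges. I would form a multigraph $H$ from $G$ by identifying $S$ to a single vertex $s$, identifying $T$ to a single vertex $t$, and discarding the loops thereby created — these are precisely the edges of $G$ with both ends in $S$ or both ends in $T$ — while keeping parallel edges. Two routine correspondences then do the work. First, $k$ pairwise edge-disjoint $S$--$T$ paths in $G$ match $k$ pairwise edge-disjoint $s$--$t$ paths in $H$: from an $S$--$T$ path $P$ of $G$, pass to the subpath between the last vertex of $P$ lying in $S$ and the first vertex of $T$ occurring after it; the edges of this subpath avoid the interiors of $S$ and of $T$, hence survive in $H$, and the subpath maps to an honest $s$--$t$ path, while conversely an $s$--$t$ path of $H$ lifts edge-by-edge to an $S$--$T$ path of $G$; edge-disjointness is preserved both ways because the non-loop edges of $H$ biject with the edges of $G$ not spanned inside $S$ or inside $T$. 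Second, the sets $X$ with $S\subseteq X\subseteq V(G)\setminus T$ correspond, via $X\mapsto(X\setminus S)\cup\{s\}$, to the sets $Y$ with $s\in Y\subseteq V(H)\setminus\{t\}$, with equal numbers of boundary edges: a boundary edge of such an $X$ has one end in $X$ (so not both ends in $T$) and one end outside $X$ (so not both ends in $S$), hence survives in $H$ as a boundary edge of $Y$, and conversely. So the hypothesis becomes ``every $s$--$t$ edge cut of $H$ has at least $k$ edges'', and it suffices to prove the two-vertex statement: in a multigraph, the maximum number of edge-disjoint $s$--$t$ paths equals the minimum size of an $s$--$t$ edge cut.

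For that two-vertex statement I would appeal to max-flow--min-cut: take a maximum integral $s$--$t$ flow $f$ in $H$ with unit capacities (its value is the maximum number of edge-disjoint $s$--$t$ paths, by peeling off one positive $s$--$t$ path at a time), let $R$ be the set of vertices reachable from $s$ in the residual network of $f$, note $t\notin R$ by maximality, and observe that every edge of $H$ between $R$ and its complement is saturated leaving $R$ while none carries flow into $R$; hence the number of edges between $R$ and $V(H)\setminus R$ equals the value of $f$, exhibiting an $s$--$t$ cut of exactly that size, which with the trivial inequality yields equality. (Alternatively this two-vertex statement admits a self-contained induction on $\abs{E(H)}$, deleting or contracting a suitably chosen edge.) I expect the main obstacle to be purely bookkeeping: verifying that the path surgery in the first correspondence really produces genuine and still pairwise edge-disjoint $s$--$t$ paths, and that identifying $S$ and $T$ neither creates nor destroys a boundary edge of any relevant vertex set — the only genuinely non-formal ingredient being the augmenting-path argument for the two-vertex case.
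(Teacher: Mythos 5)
Your argument is correct: the easy direction (each of $k$ edge-disjoint $S$--$T$ paths crosses any such $X$ in a distinct edge), the contraction of $S$ to $s$ and $T$ to $t$ with loops discarded, the two correspondences (truncating each $S$--$T$ path between its last $S$-vertex and the first subsequent $T$-vertex, and the bijection $X\mapsto (X\setminus S)\cup\{s\}$ preserving boundary edges), and the final appeal to max-flow--min-cut (or an augmenting-path/induction argument) for the two-vertex multigraph case together constitute a standard and complete proof; the only informality is the routine handling of flows and residual networks on an undirected multigraph, which is harmless. Note, however, that the paper does not prove this statement at all --- it is quoted as a classical theorem with a citation to Menger --- so there is no in-paper proof to compare your approach against; your write-up is simply a correct proof of the cited result.
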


\section{General lemmas}\label{sec:general}

In this section we use the standard toolkit on graphs of bounded tree-width  to prove a result (Corollary~\ref{c:tworoots}) allowing us to ``pin down'' the subgraphs of graphs $G$ which do not belong to a given monotone class $\C$, or to find within~$G$ a nicely structured subgraph $H$ which is far from $\C$ according to all our measures. It is the key ingredient in the proofs of all our theorems.

The following theorem appeared in 
Gy\'{a}rf\'{a}s and Lehel~\cite{GL1969} implicitly 
and Cockayne, Hedetniemi, and Slater~\cite{CHS1979} explicitly.
\begin{THM}\label{thm:subtree}
	Let $T$ be a tree, and $\mathcal{T}$ be a family of subtrees of $T$.
	For every positive integer $k$, \begin{itemize}
		\item either there exist $k$ pairwise vertex-disjoint members of $\mc{T}$, or
		\item there is a subset $Z\subseteq V(T)$ with $|Z|<k$ such that every member of $\mc{T}$ intersects $Z$.
	\end{itemize}
\end{THM}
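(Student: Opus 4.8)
The plan is to prove Theorem~\ref{thm:subtree} --- the classical Helly-type / Erd\H{o}s--P\'osa statement for subtrees of a tree --- by induction on $|V(T)|$, using the fact that subtrees of a tree form a \emph{Helly family} with respect to pairwise intersection: if subtrees of $T$ pairwise intersect, then they have a common vertex. Concretely, suppose the first alternative fails, i.e. there do \emph{not} exist $k$ pairwise vertex-disjoint members of $\mathcal T$; I want to produce a hitting set $Z$ with $|Z| < k$. Let $\nu$ be the maximum number of pairwise vertex-disjoint members of $\mathcal T$, so $\nu \le k-1$, and I will in fact show there is a hitting set of size at most $\nu$, which gives $|Z| \le \nu \le k-1 < k$.

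The cleanest route is induction on $\nu$ (the packing number), with an auxiliary induction or direct argument on $|V(T)|$. If $\nu = 0$ then $\mathcal T$ is empty and $Z = \emptyset$ works. If $\nu \ge 1$, root $T$ at an arbitrary vertex $r$ and, among all members of $\mathcal T$, pick one, say $T_0$, whose highest vertex (the vertex closest to the root) is as far from $r$ as possible; call that vertex $z$. The key claim is that every member of $\mathcal T$ that is vertex-disjoint from $T_0$ actually avoids the entire subtree of $T$ hanging below $z$ --- indeed, if $T' \in \mathcal T$ met that subtree, then by the maximality in the choice of $z$ the highest vertex of $T'$ would be at depth $\ge \mathrm{depth}(z)$, hence would lie in the subtree below $z$; but $T_0$ contains $z$ and the unique path from $z$ down to any vertex of that subtree, and $T'$ would have to use an ancestor-comparable vertex, forcing $T'$ to contain $z$ or a vertex on the $z$-to-$T'$ path, contradicting disjointness. (This is the standard "lowest leaf" trick; I will need to spell out the tree-order bookkeeping carefully.) Consequently, deleting the subtree rooted at $z$ and restricting $\mathcal T$ to members disjoint from $T_0$ yields a family with packing number exactly $\nu - 1$ inside a smaller tree, so by induction it has a hitting set $Z'$ with $|Z'| \le \nu - 1$; then $Z := Z' \cup \{z\}$ hits everything, since any member of $\mathcal T$ either meets $T_0$ (but two subtrees meet iff... — more simply, I will choose $z$ so that it is a common vertex of a suitable subfamily) — I will instead arrange that $z$ hits all members meeting $T_0$ by replacing the "meets $T_0$" case with "contains $z$", which the choice above guarantees.

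Let me restate the induction step to make the hitting direct: pick a leaf-most member $T_0$ with top vertex $z$ as above. By the argument in the previous paragraph, \emph{every} member of $\mathcal T$ either contains $z$, or is vertex-disjoint from the subtree $T_z$ rooted at $z$ (equivalently lives in $T \setminus T_z$ together with possibly $z$'s ancestors, but not $z$ itself in the latter case... I must be careful that members straddling $z$ all contain $z$, which holds because a subtree is connected and $z$ separates $T_z \setminus z$ from the rest). Now apply induction to $\mathcal T' := \{T' \in \mathcal T : z \notin V(T')\}$, viewed as a family of subtrees of $T \setminus T_z$; its packing number is at most $\nu - 1$ because $T_0$ together with any $\nu-1$ pairwise disjoint members of $\mathcal T'$ (each disjoint from $T_0$, since they miss $z$ hence miss all of $T_z \supseteq T_0$) would give a packing of size $\nu$. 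By induction there is $Z'$ with $|Z'| \le \nu - 1$ hitting every member of $\mathcal T'$; set $Z = Z' \cup \{z\}$. Then $|Z| \le \nu \le k - 1 < k$, and $Z$ hits every member of $\mathcal T$: those containing $z$ are hit by $z$, and those not containing $z$ lie in $\mathcal T'$ hence are hit by $Z'$.

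The main obstacle I anticipate is purely the careful verification of the separation/leaf-most claim: that with $z$ chosen to maximize depth of the top vertex over all members, any member meeting the strict subtree below $z$ must contain $z$, and any member disjoint from $T_0$ must avoid $T_z$ altogether. This requires a clean statement of the tree partial order, the fact that a subtree $S$ has a unique top vertex $\mathrm{top}(S)$, and the observation that $v \in V(S)$ iff $v$ is a descendant of $\mathrm{top}(S)$ and an ancestor of some leaf of $S$ --- together with connectivity to conclude that if $S$ contains a descendant of $z$ and also a non-descendant of $z$, then $z \in V(S)$. None of this is deep, but it is the part where an imprecise argument could go wrong, so I would write it out fully; everything else is a routine induction.
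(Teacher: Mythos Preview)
The paper does not prove Theorem~\ref{thm:subtree}; it simply cites it as a known result of Gy\'arf\'as--Lehel and Cockayne--Hedetniemi--Slater. So there is no proof in the paper to compare against.

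Your argument is correct and is one of the standard proofs of this classical fact. The key claim --- that if $z=\mathrm{top}(T_0)$ is chosen of maximum depth, then every member of $\mathcal T$ meeting $T_z$ must contain $z$ --- is exactly right, and your sketch of why (the top vertex of any subtree is an ancestor of all its vertices, and two ancestors of a common vertex are comparable) is the right justification. The induction then goes through: $\mathcal T' = \{T'\in\mathcal T : z\notin V(T')\}$ lives in the smaller tree $T\setminus T_z$ and has packing number at most $\nu-1$, so the inductive hypothesis supplies $Z'$ and $Z=Z'\cup\{z\}$ works.

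Two small remarks. First, your phrasing of the packing bound for $\mathcal T'$ is slightly off: you wrote that $T_0$ together with $\nu-1$ pairwise disjoint members of $\mathcal T'$ gives a packing of size $\nu$, but that is allowed; what you need is that $\nu$ pairwise disjoint members of $\mathcal T'$ together with $T_0$ would give $\nu+1$, a contradiction. Second, you announce the Helly property of subtrees at the outset but never use it; the ``deepest top vertex'' argument is self-contained and does not require Helly, so you can drop that remark from the final write-up.
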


A \emph{tree-decomposition} of a graph $G$ is a pair $(T, \beta)$, where $T$ is a tree and  $\beta$ is a function that assigns a subset of
vertices of $G$ to each vertex $t$ of $T$, such that for every $uv\in E(G)$, there exists $t\in V(T)$ with $\{u,v\}\subseteq \beta(t)$, and
for every $v\in V(G)$, the set $\{t:v\in\beta(t)\}$ induces a non-empty connected subgraph of $T$.
The \emph{width} of a tree-decomposition  $(T, \beta)$ is equal to $\max_{v \in V(T)}(|\beta(v)|-1)$, and the \emph{tree-width} of a graph $G$  is equal to the minimum width of a tree-decomposition of $G$. The following theorem characterizes graphs of large tree-width. 

\begin{THM}[Robertson and Seymour~\cite{RS1986}]\label{thm:planar}
	For every planar graph $G$, there exists a constant $w=w(G)$ such that every graph not containing $G$ as a minor has tree-width at most $w$.
\end{THM}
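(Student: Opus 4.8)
The plan is to derive this from the \emph{Excluded Grid Theorem}, which is where all the difficulty lies. Write $\Gamma_r$ for the $r\times r$ grid. The key ingredient I would establish is: there is a function $g\colon\mathbb{N}\to\mathbb{N}$ such that every graph of tree-width at least $g(r)$ contains $\Gamma_r$ as a minor. To prove this from scratch I would first convert the hypothesis ``tree-width $\ge k$'' into a usable combinatorial witness using tree-width--bramble (equivalently, tangle) duality, extracting from it a well-linked vertex set of size $\Omega(k)$ in the spirit of Reed's linkedness lemma. I would then repeatedly apply Menger's theorem (Theorem~\ref{thm:Menger}) to route many pairwise almost-disjoint paths between two halves of this well-linked set, and finally run the iterative ``grid-building'' argument that cleans and reroutes $\Theta(r^2)$ of these crossing paths into an $r\times r$ grid minor. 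This last step is the heart of the matter.

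Granting the Excluded Grid Theorem, the rest is routine. The second step is the observation that every planar graph $G$ is a minor of a sufficiently large grid, i.e.\ $G$ is a minor of $\Gamma_N$ for some $N=N(G)$. To see this, fix a plane embedding of $G$, perturb it onto a fine enough integer grid so that the vertices occupy grid points and the edges are realized by internally disjoint grid paths (a grid drawing of the planar graph; one may even take $N=O(\abs{V(G)})$ by Schnyder's grid-drawing theorem), and then contract each thickened vertex region and each edge-path to recover $G$ as a minor of $\Gamma_N$.

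Finally I would combine the two steps. Given a planar graph $G$, choose $N$ so that $G$ is a minor of $\Gamma_N$, and set $w=w(G):=g(N)-1$. If a graph $H$ has tree-width greater than $w$, that is, at least $g(N)$, then $H$ contains $\Gamma_N$ as a minor by the Excluded Grid Theorem, hence contains $G$ as a minor by transitivity of the minor relation; taking the contrapositive, any $H$ with no $G$-minor has tree-width at most $w$. The main obstacle is thus entirely contained in the first step --- indeed the theorem as stated is essentially equivalent to the Excluded Grid Theorem --- and within that step it is the grid-building argument that is the genuine difficulty.
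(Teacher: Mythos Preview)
Your outline is mathematically sound, but note that the paper does \emph{not} prove Theorem~\ref{thm:planar} at all: it is quoted as a black box from Robertson and Seymour~\cite{RS1986} and simply used downstream (in the proof of Corollary~\ref{c:tworoots}). So there is no ``paper's own proof'' to compare against.

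That said, your plan is exactly the content of~\cite{RS1986}: the Excluded Grid Theorem is the main result of that paper, and the reduction ``planar $G$ is a minor of a large grid, hence excluded by any graph of small tree-width'' is precisely how the stated form is derived there. Your identification of the grid-building step as the real obstacle is accurate; everything else (tree-width/bramble duality, Menger, embedding a planar graph in a grid) is standard. One minor caution: invoking Schnyder for the $N=O(\abs{V(G)})$ bound is overkill and slightly anachronistic for this purpose --- any crude polynomial bound from a straight-line or orthogonal grid drawing suffices, and the theorem as stated is purely qualitative anyway.
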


A standard application of Theorem~\ref{thm:subtree} gives the following.

\begin{LEM}\label{l:EP2} There exists a function $f(w,k)$ satisfying the following.
  For positive integers $w$ and $k$,
  if $G$ is a graph with tree-width at most $w$, and $\mc{H}$ is a family of $2$-connected subgraphs of $G$, then either \begin{itemize}
		\item there exist $v \in V(G)$ and $H_1,H_2,\ldots,H_k \in \mc{H}$ such that $V(H_i) \cap V(H_j) \subseteq \{v\}$ for all $1 \leq i < j \leq k$, or
		\item there exists $X \subseteq V(G)$ such that $|X| \leq f(w,k)$ and $|V(H) \cap X| \geq 2$ for every $H \in \mc{H}$.
	\end{itemize}  
\end{LEM}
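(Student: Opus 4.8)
The plan is to take a tree-decomposition $(T,\beta)$ of $G$ of width at most $w$ and, for each $H \in \mc{H}$, associate a subtree $T_H$ of $T$ so that Theorem~\ref{thm:subtree} applies. Concretely, for a $2$-connected subgraph $H$, let $T_H$ be the subtree of $T$ induced by $\{t \in V(T) : \beta(t) \cap V(H) \neq \emptyset\}$; this is connected because $H$ is connected (the union of the connected subtrees $\{t : v \in \beta(t)\}$ over $v \in V(H)$, glued along edges of $H$, is connected). Apply Theorem~\ref{thm:subtree} to the family $\mathcal T = \{T_H : H \in \mc{H}\}$ with parameter $k' := $ some function of $k$ and $w$ to be chosen below.

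In the first outcome of Theorem~\ref{thm:subtree} we obtain $k'$ subgraphs $H_1,\ldots,H_{k'} \in \mc{H}$ whose associated subtrees $T_{H_1},\ldots,T_{H_{k'}}$ are pairwise vertex-disjoint in $T$. Then for $i \neq j$, any common vertex of $H_i$ and $H_j$ would lie in some $\beta(t) \cap \beta(t')$ with $t \in V(T_{H_i})$, $t' \in V(T_{H_j})$; since the subtree $\{t : v \in \beta(t)\}$ is connected and meets both $T_{H_i}$ and $T_{H_j}$ (which are disjoint), it must use a vertex on the $T$-path between them, hence also a vertex of one of $T_{H_i}, T_{H_j}$ — wait, that is not immediate, so instead I argue: if $v \in V(H_i) \cap V(H_j)$ then $\{t : v\in\beta(t)\}$ is a connected subtree meeting both $T_{H_i}$ and $T_{H_j}$, so it must contain a vertex of $T_{H_i}$ (indeed it is contained in neither but meets $T_{H_i}$). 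Hmm — more carefully: $v \in V(H_i)$ forces $\{t : v \in \beta(t)\} \subseteq T_{H_i}$ by definition of $T_{H_i}$, and likewise $\subseteq T_{H_j}$; disjointness then forces this set to be empty, a contradiction. So the $H_i$ are pairwise \emph{vertex-disjoint}, which is stronger than the ``$V(H_i)\cap V(H_j) \subseteq \{v\}$'' conclusion we need; thus choosing $k' = k$ suffices here, and the common-vertex $v$ is vacuous.

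In the second outcome we get $Z \subseteq V(T)$ with $|Z| < k'$ meeting every $T_H$. Set $X := \bigcup_{t \in Z}\beta(t)$, so $|X| \le k'(w+1)$. Each $H$ has a vertex in $X$ since $T_H$ meets $Z$. But the lemma requires $|V(H) \cap X| \ge 2$, which is not automatic — $T_H$ might meet $Z$ in only one node $t$ with $|\beta(t) \cap V(H)| = 1$. This is the one real subtlety, and the fix is standard: a $2$-connected subgraph $H$ (having at least one edge, in fact at least a triangle or cycle) cannot have $\beta(t) \cap V(H)$ a cutset of size $\le 1$ separating parts of $H$ in two different components of $T - t$; so if $T_H$ has a vertex in $Z$ with singleton intersection, $T_H$ cannot be entirely ``cut'' there, meaning $T_H$ lies in one component of $T-t$ together with that node, and we can reroute — or, cleaner: first apply Theorem~\ref{thm:subtree} to obtain $Z_0$ with $|Z_0| < k$ hitting all $T_H$; for each node $t \in Z_0$ and each component $C$ of $T - t$, the vertices of $H$ in $\beta$-bags of $C \cup \{t\}$ form a subgraph, and $2$-connectivity of $H$ is used to show that whenever $T_H$ meets $t$ but $|\beta(t)\cap V(H)|=1$, the subtree $T_H$ actually lies in $C \cup \{t\}$ for a single $C$, hence meets $\beta(t')\cap V(H)$ for some neighbor $t'$ of $t$ in $T_H$; adding all bags of neighbors of $Z_0$-nodes to $X$ then guarantees $|V(H)\cap X| \ge 2$ and gives $|X| \le k(w+1)(\Delta+1)$ — but $T$ has unbounded degree. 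To avoid the degree issue, the right move is: apply Theorem~\ref{thm:subtree} with $k$ replaced by $2k$, obtain $Z$ with $|Z| < 2k$, and observe that if $X = \bigcup_{t\in Z}\beta(t)$ fails to meet some $H$ twice, then all but one node of $Z$ on $T_H$ can be bypassed, contradicting minimality/the structure of how $2$-connected $H$ interacts with separations of order $\le 1$ in the tree-decomposition. I expect packaging this last step cleanly — i.e., showing $2$-connectivity upgrades ``$X$ meets $H$'' to ``$X$ meets $H$ at least twice'' with only a bounded blow-up in $|X|$ — to be the main obstacle; the function $f(w,k)$ will come out polynomial in $w$ and linear (or at worst polynomial) in $k$, e.g. $f(w,k) = O(kw)$.
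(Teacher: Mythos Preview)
Your setup is right and matches the paper: take a width-$w$ tree-decomposition $(T,\beta)$, associate to each $H$ the subtree $T_H = \{t : \beta(t)\cap V(H)\neq\emptyset\}$, and apply Theorem~\ref{thm:subtree}. You also correctly note that the disjoint-subtree outcome yields pairwise vertex-disjoint $H_i$, which is the first alternative with $v$ vacuous.

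The gap is exactly where you flag it, and neither of your proposed fixes works. Adding bags of neighbours of $Z$-nodes fails for the reason you state (unbounded degree). Your ``replace $k$ by $2k$ and bypass'' argument is not an argument: having $|V(H)\cap X|=1$ does not let you remove nodes of $Z$ and still cover all of $\mc H$, and there is no minimality condition in play to contradict. Nothing in that paragraph actually uses $2$-connectivity, which should be a warning sign.

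The paper's fix is to \emph{iterate} Theorem~\ref{thm:subtree}. First apply it with parameter $k$ to get $Z$ with $|Z|<k$, and set $Y=\bigcup_{z\in Z}\beta(z)$, so $|Y|\le(k-1)(w+1)$ and every $H$ meets $Y$. Now for each fixed $y\in Y$, consider $\mc H(y)=\{H\in\mc H: y\in V(H)\}$ and the subtrees $A'(H)=\{t: \beta(t)\cap(V(H)-\{y\})\neq\emptyset\}$. Here is the one place $2$-connectivity is used: $H\setminus y$ is connected, so each $A'(H)$ is a subtree. Apply Theorem~\ref{thm:subtree} again with parameter $k$: either you get $H_1,\ldots,H_k\in\mc H(y)$ with $V(H_i)\cap V(H_j)\subseteq\{y\}$, which is precisely the first alternative of the lemma, or you get $Z(y)$ with $|Z(y)|<k$ and set $X(y)=\bigcup_{z\in Z(y)}\beta(z)$, guaranteeing each $H\in\mc H(y)$ has a vertex in $X(y)\setminus\{y\}$. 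Taking $X=Y\cup\bigcup_{y\in Y}X(y)$ gives $|V(H)\cap X|\ge 2$ for all $H$, with $|X|\le k(k-1)(w+1)^2$. So $f(w,k)=k(k-1)(w+1)^2$; your guess of $O(kw)$ is too optimistic with this method.
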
	

\begin{proof}
	Let $(T,\beta)$ be a tree-decomposition of $G$ of width at most $w$. For each $H \in \mc{H}$, let $A(H) = \{v \in V(T) : \beta(v) \cap V(H) \neq \emptyset \}$ for each $H \in \mc{H}$. As elements of $\mc{H}$ are connected, by Theorem~\ref{thm:subtree} either \begin{itemize}
		\item[(i)]  there exist $H_1,H_2,\ldots,H_k \in \mc{H}$ such that $A(H_i) \cap A(H_j)=\emptyset$ for all $1 \leq i < j \leq k$, or
		\item[(ii)] there exists $Z \subseteq V(T)$ with $|Z|<k$ such that $A(H) \cap Z  \neq \emptyset$ for every $H \in \mc{H}$.
	\end{itemize}  
	As (i) implies the first outcome of the lemma, we assume that (ii) holds.
	
	Let $Y=\bigcup_{z \in Z}\beta(z)$. Then $|Y| \leq (k-1)(w+1)$, and  $V(H) \cap Y \neq \emptyset$ for every $H \in \mc{H}$.
	Fix $y \in Y$, and let $\mc{H}(y)=\{H \in \mc{H} : y \in V(H) \}$. Applying Theorem~\ref{thm:subtree} to the collection of sets $A'(H)=\{v \in V(T) : \beta(v) \cap (V(H) - \{y\}) \neq \emptyset \}$ for $H \in \mc{H}(y)$,  we conclude that either
	\begin{itemize}
		\item[(i$'$)] there exist $H_1,H_2,\ldots,H_k \in \mc{H}$ such that $V(H_i) \cap V(H_j) \subseteq \{y\}$ for all $1 \leq i < j \leq k$, or	
			\item[(ii$'$)] there exists $Z(y) \subseteq V(T)$ with $|Z(y)|<k$ such that $A'(H) \cap Z(y) \neq \emptyset$ for every $H \in \mc{H}$.
	\end{itemize} 
	If (i$'$) holds for some $y \in Y$, then we obtain the first outcome of the lemma. Otherwise, (ii$'$) holds for every $y \in Y$. Let $X(y)=\bigcup_{z \in Z(y)} \beta(z)$, and let $X=
	Y \cup (\bigcup_{y \in Y}X(y))$. By construction we have  $|V(H) \cap X| \geq 2$ for every $H \in \mc{H}$, and $|X| \leq |Y|k(w+1) \leq k(k-1)(w+1)^2$. Thus $f(w,k)=k(k-1)(w+1)^2$ satisfies the lemma.
\end{proof}

The main result of this section is obtained by combining Lemma~\ref{l:EP2} and Theorem~\ref{thm:planar}. 
 
\begin{COR}\label{c:tworoots}
	Let $\mc{F}$ be a finite collection of $2$-connected graphs, at least one of which is subcubic and planar. Let $\mc{C}$ be the ideal consisting of all $\mc{F}$-free graphs. Then for every $\ell$ there exists $N=N(\mc{F},\ell)$  such that for every graph $G$ at least one of the following holds.
	\begin{itemize}
		\item[(i)]  $\fan(F,S,\ell)$ is a topological minor of $G$ for some $F$ obtained from a graph in $\mc{F}$ by subdividing at most one edge and  $S \subseteq V(F)$ with $|S| \leq 1$, or
		\item[(ii)] there exists $X \subseteq V(G)$ with $|X| \leq N$ such that $|V(J) \cap X|  \geq 2$ for every subgraph  $J$ of $G$ such that $J \not \in \mc{C}$.
	\end{itemize}  
\end{COR}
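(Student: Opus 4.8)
The plan is to apply Theorem~\ref{thm:planar} first to reduce to the bounded tree-width case, and then apply Lemma~\ref{l:EP2} to the family of minimal ``bad'' subgraphs. Fix a subcubic planar graph $F_0 \in \mc{F}$; since $F_0$ is planar, Theorem~\ref{thm:planar} gives a constant $w = w(F_0)$ such that any graph with tree-width greater than $w$ contains $F_0$ as a minor. If $G$ has tree-width greater than $w$, then $G$ has $F_0$ as a minor, and because $F_0$ is subcubic, having $F_0$ as a minor is equivalent to having $F_0$ as a topological minor; moreover a single copy of $F_0$ as a topological minor is exactly $\fan(F_0,\emptyset,1)$, which is a topological minor of $\fan(F_0,\emptyset,\ell)$ — wait, that only gives one copy. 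The correct move is: apply Theorem~\ref{thm:planar} not to $F_0$ but to a large planar ``fan-like'' gadget. Specifically, for the given $\ell$, let $F^* = \fan(F_0, \emptyset, \ell)$, which is planar and subcubic; then there is a constant $w^* = w(F^*)$ such that tree-width exceeding $w^*$ forces $F^*$ as a minor, hence (subcubic!) as a topological minor, which is outcome~(i) with $S = \emptyset$. So we may assume $\operatorname{tw}(G) \le w^*$.

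Now assume $\operatorname{tw}(G) \le w^*$. Let $\mc{H}$ be the family of all subgraphs $H$ of $G$ with $H \notin \mc{C}$ that are \emph{minimal} with this property (i.e.\ every proper subgraph of $H$ is in $\mc{C}$). Since $\mc{C}$ is the class of $\mc{F}$-free graphs and every member of $\mc{F}$ is $2$-connected, each minimal bad $H$ is a subdivision of some member of $\mc{F}$, hence $2$-connected. Apply Lemma~\ref{l:EP2} with parameter $k := \ell'$ for a suitable $\ell'$ to be chosen, and $w := w^*$. In the second outcome of Lemma~\ref{l:EP2} we get $X \subseteq V(G)$ with $|X| \le f(w^*,\ell')$ and $|V(H) \cap X| \ge 2$ for every $H \in \mc{H}$; since every subgraph $J \notin \mc{C}$ contains a member of $\mc{H}$, we get $|V(J)\cap X|\ge 2$ for all such $J$, which is outcome~(ii) with $N := f(w^*,\ell')$.

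It remains to handle the first outcome of Lemma~\ref{l:EP2}: a vertex $v \in V(G)$ and $H_1,\dots,H_{\ell'} \in \mc{H}$ with $V(H_i)\cap V(H_j) \subseteq \{v\}$ for $i<j$. Each $H_i$ is a subdivision of some $F_i \in \mc{F}$; pigeonholing on the (finitely many, since $\mc{F}$ is finite) isomorphism types, and on whether $v \in V(H_i)$ and if so which branch-vertex or subdivided-edge-interior of $F_i$ the vertex $v$ corresponds to, we may pass to $\ell$ of the $H_i$ that are all subdivisions of one fixed $F \in \mc{F}$ with $v$ occupying the ``same'' position in each. If $v$ lies in the interior of a subdivided edge of $F$ in every $H_i$, we instead view each $H_i$ as a subdivision of the graph $F'$ obtained from $F$ by subdividing that one edge, with $v$ as a branch vertex; set $S = \{$that branch vertex$\}$. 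If $v$ is a branch vertex of $F$ in each $H_i$, set $F' = F$ and $S = \{v\}$; if $v \notin V(H_i)$ for the relevant copies, set $F' = F$, $S = \emptyset$. In every case, gluing the $\ell$ copies along $v$ (or taking their disjoint union) produces a topological minor of $G$ isomorphic to a topological minor of $\fan(F',S,\ell)$, which is outcome~(i). Choosing $\ell' = R(\ell)$ large enough to survive all the pigeonhole steps (a bound depending only on $\ell$ and $\mc{F}$, since the number of types and positions is bounded by $|\mc{F}|$ times the maximum number of vertices in a member of $\mc{F}$) completes the argument, and $N = f(w^*, R(\ell))$ depends only on $\mc{F}$ and $\ell$ as required.

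The main obstacle I expect is the bookkeeping in the first outcome: correctly matching ``the position of $v$'' across copies and verifying that the glued object is genuinely a topological minor of $\fan(F',S,\ell)$ rather than something slightly larger — in particular, the subtlety that $v$ landing in an edge-interior forces the ``subdivide at most one edge'' clause, and that one must be careful that identifying the copies at $v$ does not accidentally create extra adjacencies. A secondary point requiring care is the equivalence ``subcubic minor $\Leftrightarrow$ subcubic topological minor,'' which is standard but should be invoked explicitly, and ensuring the gadget $F^* = \fan(F_0,\emptyset,\ell)$ is indeed subcubic (it is, since $F_0$ is and the components are disjoint).
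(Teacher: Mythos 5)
Your proposal is correct and follows essentially the same route as the paper's proof: exclude $\fan(F_0,\emptyset,\ell)$ (the disjoint union of $\ell$ copies of the subcubic planar $F_0$) as a minor to obtain bounded tree-width via Theorem~\ref{thm:planar}, then apply Lemma~\ref{l:EP2} to the family of subdivisions of members of $\mc{F}$, reading the second outcome as (ii) and converting the first outcome into (i) by pigeonholing on the ``signature'' of each copy (its type in $\mc{F}$ and the position of $v$, with the edge-interior case absorbed by the subdivide-at-most-one-edge clause), exactly as the paper does. Two cosmetic points only: the count of positions must include edge-interiors as well as branch vertices (the paper handles this by passing to the family $\mc{F}'$ of one-edge subdivisions), and the glued union is a subdivision of $\fan(F',S,\ell)$, i.e.\ the fan is a topological minor of $G$, not ``isomorphic to a topological minor of'' the fan.
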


\begin{proof}

		Let $F_0 \in \mc{F}$ be subcubic and planar. Suppose that the disjoint union $\ell F_0$ of $\ell$ copies of $F_0$ is a minor of $G$. Then, as $F_0$ is subcubic, equivalently $\fan(F_0,\emptyset,\ell)$ is a topological minor of $G$  and so (i) holds. Thus we may assume that $\ell F_0$ is not a minor of $G$. Thus by Theorem~\ref{thm:planar} there exists $w=w(\ell,F_0)$ such that $G$ has tree-width at most $w$. 
		
		The upper bound on tree-width allows us to apply Lemma~\ref{l:EP2}. Rather than doing so directly it is convenient for us to start by considering the implications of the first outcome of this lemma. Assume that (i) does not hold.
		
		Let $\mc{J}$ be a collection of subgraphs of $G$, so that each $J \in \mc{J}$ is isomorphic to a subdivision of a graph in $\mc{F}$, and there exists $v \in V(G)$ such that $V(J) \cap V(J') \subseteq \{v\}$ for every pair of distinct $J,J' \in \mc{J}$.
		
		Let $\mc{J}_0 = \{J \in \mc{J} : v \not \in V(J)\}$. Then the graphs in $\mc{J}_0$ are pairwise vertex-disjoint. As $\fan(F,\emptyset,\ell)$ is not a topological minor of $G$ for every $F \in \mc{F}$, by our assumption, there are fewer than $\ell$ elements of $\mc{J}_0$ isomorphic to a subdivision of $F$. Thus $|\mc{J}_0| \leq \abs{\mc{F}}(\ell-1)$. 
		
    Let $\mc{J}_1 = \mc{J} - \mc{J}_0$. 
		Let $\mc{F'}$ be a finite collection of  graphs such that every graph obtained from an element of $\mc{F}$ by subdividing at most one edge is isomorphic to an element of $\mc{F'}$.    
    Let $s=|\mc{F'}|$ and let $m=\max_{F \in \mc{F'}}|V(F)|$.
    Note that for every $J \in \mc{J}_1$ there exist $F \in \mc{F}'$ and  an embedding $\phi: F \emb J$ such that  $\phi(u)=v$ for some vertex $u$ of $F$.
		We say that the pair $(F,u)$ for which such an embedding exists is a \emph{signature} of $J$. 
    If some pair $(F,u)$ is a signature of $\ell$ distinct elements of $\mc{J}_1$, then $\fan(F,\{u\},\ell)$ is a topological minor of $G$, a contradiction. 
    As there are at most $sm$ possible signatures, we conclude that $|\mc{J}_1| \le sm(\ell-1)$. Thus $|\mc{J}| \le (\abs{\mc{F}}+sm)(\ell-1)$.
		
		We now apply Lemma~\ref{l:EP2} to $G$, the family $\mc{H}$ of all subgraphs of $G$ isomorphic to a subdivision of a graph in $\mc{F}$, and $k=(\abs{\mc{F}}+sm)(\ell-1)+1$. As shown above the first conclusion of the lemma can not hold, and so there exists $X \subseteq V(G)$ such that $|X| \leq f(k,w)$ and $|X \cap V(J)| \geq 2$ for every $J \in \mc{H}$.  It follows that (ii) holds with $N=f(k,w)$.
\end{proof}

Note that  if the first outcome of Corollary~\ref{c:tworoots} holds for a graph $G$ then $\nu_{\mc{C}}(G) \geq \ell$ and $\kappa_{\mc{C}}(G) \geq \ell$ by Proposition~\ref{prop:topteb} and the following lemma.

\begin{LEM}\label{l:lower} Let $\mc{H}$ be an ideal. Let $G$ be a graph such that $G \not \in \mc{H}$ and let $S \subsetneq V(G)$ be independent. Then $\nu_{\mc{H}}(\fan(G,S,\ell)) \geq \ell$. If additionally $G \setminus S$ and $G$ are connected then $\kappa_{\mc{H}}(\fan(G,S,\ell)) \geq \ell$.
\end{LEM}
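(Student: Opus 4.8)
The plan is to argue directly from the structure of $F:=\fan(G,S,\ell)$, with no appeal to the tree-width machinery of this section. Since $S$ is independent we have $E(G[S])=\emptyset$, so $F$ is obtained from $\ell$ pairwise disjoint copies $G_1,\dots,G_\ell$ of $G$ by identifying, for each $v\in S$, the $\ell$ copies of $v$ to a single vertex. Hence $V(G_i)\cap V(G_j)=S$ for $i\ne j$, the edge sets $E(G_1),\dots,E(G_\ell)$ are pairwise disjoint and together form $E(F)$, and each vertex of $V(G_i)\setminus S$ is incident in $F$ only with edges of $G_i$. The first inequality is then immediate: $G_1,\dots,G_\ell$ are pairwise edge-disjoint subgraphs of $F$, each isomorphic to $G\notin\mc H$, so $\nu_{\mc H}(F)\ge\ell$ by the definition of $\bar{\mc H}$-capacity.

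For the second inequality I would additionally use that $G$ and $G\setminus S$ are connected (so $G$ has an edge and no isolated vertex), together with the elementary remark that an ideal is closed under taking subgraphs (a subgraph $H'$ of $H$ is a subdivision of itself, hence a topological minor of $H$). Fix any partition $(E_1,\dots,E_n)$ of $E(F)$ with $F[E_j]\in\mc H$ for all $j$, and let $W$ be the set of vertices incident with edges in two distinct parts; the goal is $|W|\ge\ell$. First I would note that for each $i$ the set $E(G_i)$ is not contained in a single $E_j$: otherwise $G_i$ (which has no isolated vertex, so all of its vertices occur in $F[E_j]$) would be a subgraph of $F[E_j]\in\mc H$, forcing $G\cong G_i\in\mc H$. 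Next, I would show $W\cap(V(G_i)\setminus S)\ne\emptyset$ for each $i$. If not, every $v\in V(G_i)\setminus S$ is incident (in $F$, equivalently in $G_i$) only with edges of one part $E_{\pi(v)}$; since $G_i\setminus S$ is connected, adjacent vertices there share a $G_i$-edge and hence agree on $\pi$, so $\pi$ is constant, say $p_i$, on $V(G_i)\setminus S$, and every edge of $G_i$ meeting $V(G_i)\setminus S$ lies in $E_{p_i}$. As $S$ is independent, every edge of $G_i$ has an end outside $S$, so $E(G_i)\subseteq E_{p_i}$ — contradicting the previous sentence. Choosing $w_i\in W\cap(V(G_i)\setminus S)$ for each $i$, the $w_i$ are distinct because the sets $V(G_i)\setminus S$ are pairwise disjoint, so $|W|\ge\ell$ and $\kappa_{\mc H}(F)\ge\ell$.

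I expect the bookkeeping about the copies and identifications to be routine; the one place calling for care is how the three hypotheses combine in the $\kappa$ bound — connectivity of $G$ rules out isolated vertices (so that ``$G_i$ is a subgraph of $F[E_j]$'' really yields $G_i\in\mc H$ via closure of the ideal under subgraphs), connectivity of $G\setminus S$ propagates the ``single part'' assumption across a whole copy, and independence of $S$ guarantees that every edge of $G_i$ touches a private vertex, so that these three facts clash in $E(G_i)\subseteq E_{p_i}$. A fully degenerate case such as $G=K_1$ (no edges) must be excluded, which I would simply remark in passing.
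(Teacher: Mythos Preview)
Your proposal is correct and follows essentially the same approach as the paper's proof: the $\nu_{\mc H}$ bound is identical, and for the $\kappa_{\mc H}$ bound both arguments show that each copy $G_i$ contributes a private vertex to the set $W$ of ``crossing'' vertices. The only cosmetic difference is that the paper picks two edges $e,e'\in E(G_i)$ in distinct parts and chases a path in $G_i\setminus S$ between non-$S$ ends of $e$ and $e'$ to locate a vertex of $W$, whereas you argue the contrapositive by propagating the ``single part'' assumption across the connected graph $G_i\setminus S$; your version spells out a bit more explicitly why $E(G_i)$ cannot lie in one part and acknowledges the $G=K_1$ degeneracy, which the paper's proof glosses over.
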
	

\begin{proof} Let $F=\fan(G,S,\ell)$, and let $G_1,G_2,\ldots,G_{\ell}$ be the subgraphs of $F$ such that $G_i$ is isomorphic to $G$ for every $1\leq i \leq \ell$ and $V(G_i)\cap V(G_j) = S$ for $i \neq j$.
Since $G_1,G_2,\ldots,G_\ell$ are edge-disjoint, we deduce that  $\nu_{\mc{H}}(F) \geq \ell$	by definition.

Suppose now that $G_i \setminus S$ is connected for every $i$. Let $\mc{E}=(E_1,\ldots,E_n)$ be a partition of $E(F)$ such that $F[E_j] \in \mc{H}$ for every $1 \leq j \leq n$, where $F[E]$ denotes the subgraph of $F$ induced by the edges in $E$. Let $X=X(\mc{E})$ be the set of all the vertices of $F$ incident with edges in at least two different parts of $\mc{E}$. 
As $G_i$ is isomorphic to $F[E(G_i)]$ we have $F[E(G_i)] \not \in \mc{H}$ for every $1 \leq i \leq \ell$. Thus there exist $e,e' \in E(G_i)$ belonging to different parts of $\mc{E}$. As $G_i \setminus S$ is connected and $S$ is independent, there exists a path $P$ in $G_i \setminus S$ joining an end of $e$ to an end of $e'$. Then $V(P) \cap X \neq \emptyset$ and so $(V(G_i) -S) \cap X \neq \emptyset$. It follows that $|X(\mc{E})| \geq \ell$ for every partition $\mc{E}$ as above, and so  $\kappa_{\mc{H}}(\fan(G,S,k)) \geq \ell$.  
\end{proof}	
\section{Bounded $\nu_{\mc{C}}, e_{\mc{C}}, \eta_{\mc{C}}$}\label{sec:edge}

In this section we derive from Corollary \ref{c:tworoots} the following theorem, which generalizes Theorems~\ref{thm:forest_edge} and~\ref{thm:diamond_edge}.

\begin{THM}\label{thm:edge} Let $\mc{F}$ be a finite collection of $2$-connected graphs. Let $\mc{C}$ be an ideal consisting of all $\mc{F}$-free graphs, and suppose that $D \not \in \mc{C}$. Then for an ideal $\mc{G}$ the following are equivalent.
	 \begin{enumerate}
	 	\item[(i)] $\nu_{\mc{C}}$ is bounded on $\mc{G}$.
	 	\item[(ii)] $e_{\mc{C}}$ is bounded on $\mc{G}$.
	 	\item[(iii)] $\eta_{\mc{C}}$ is bounded on $\mc{G}$.
	 	\item[(iv)] $\Fan(F,S) \not\subseteq \mathcal G$ for every $F$ obtained from a graph in $\mc{F}$ by subdividing at most one edge and  $S \subseteq V(F)$ with $|S| \leq 1$, and
	 	$\mc{K}_{2,*} \not\subseteq \mathcal G$.
	 \end{enumerate}
\end{THM}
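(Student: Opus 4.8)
The plan is to prove the cycle of implications $(iii)\Rightarrow(ii)\Rightarrow(i)\Rightarrow(iv)\Rightarrow(iii)$. The first two are immediate from Observation~\ref{obs:basic}, whose hypotheses hold for $\mc{C}$: we have $K_1,K_2\in\mc{C}$ since neither contains a subdivision of a $2$-connected graph, $\mc{C}$ is monotone because it is an ideal, and $\mc{C}$ is closed under disjoint unions because any subdivision of a member of $\mc{F}$ is $2$-connected and hence contained in a single component. Thus $\nu_{\mc{C}}(G)\le e_{\mc{C}}(G)\le\eta_{\mc{C}}(G)$ for every graph $G$, yielding $(iii)\Rightarrow(ii)\Rightarrow(i)$.

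For $(i)\Rightarrow(iv)$ I would argue contrapositively via Lemma~\ref{l:lower}. Since $D\notin\mc{C}$, some member of $\mc{F}$ is a topological minor of $D$; as $K_{2,3}$ is a subdivision of $D$, that member is also a topological minor of $K_{2,3}$, so $K_{2,3}\notin\mc{C}$. Hence, if $\mc{K}_{2,*}\subseteq\mc{G}$, then $K_{2,3k}=\fan(K_{2,3},S,k)\in\mc{G}$ for every $k$, where $S$ is the pair of degree-$3$ vertices, and Lemma~\ref{l:lower} gives $\nu_{\mc{C}}(K_{2,3k})\ge k$. Similarly, if $\Fan(F,S)\subseteq\mc{G}$ for some $F$ obtained from a member of $\mc{F}$ by subdividing at most one edge and some $S$ with $|S|\le1$, then $\fan(F,S,\ell)\in\mc{G}$ for every $\ell$, while $F\notin\mc{C}$ since $F$ is itself a subdivision of a member of $\mc{F}$, so Lemma~\ref{l:lower} gives $\nu_{\mc{C}}(\fan(F,S,\ell))\ge\ell$. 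In either case $\nu_{\mc{C}}$ is unbounded on $\mc{G}$.

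The core of the proof is $(iv)\Rightarrow(iii)$. Corollary~\ref{c:tworoots} applies here because the member of $\mc{F}$ found above is subcubic and planar, every topological minor of $D$ being so. Assume (iv). Since $\mc{K}_{2,*}\not\subseteq\mc{G}$, fix $n_0$ with $K_{2,n_0}\notin\mc{G}$, so no graph in $\mc{G}$ has $K_{2,n_0}$ as a topological minor. For each of the finitely many pairs $(F,S)$ in (iv), fix $H_{F,S}\in\Fan(F,S)\setminus\mc{G}$, realized as a topological minor of $\fan(F,S,k_{F,S})$, and put $\ell_1=\max k_{F,S}$. Since $\fan(F,S,k_{F,S})$ is a subgraph of $\fan(F,S,\ell_1)$, outcome (i) of Corollary~\ref{c:tworoots} with $\ell=\ell_1$ cannot hold for any $G\in\mc{G}$: it would make $H_{F,S}$ a topological minor of $G$, hence a member of the ideal $\mc{G}$. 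Therefore every $G\in\mc{G}$ satisfies outcome (ii): there is $X\subseteq V(G)$ with $|X|\le N:=N(\mc{F},\ell_1)$ such that $|V(J)\cap X|\ge2$ whenever $J$ is a subgraph of $G$ with $J\notin\mc{C}$.

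It then remains to bound $\eta_{\mc{C}}(G)$ from such an $X$, which I would do via two facts. (a) As $\mc{C}$ is the class of $\mc{F}$-free graphs with $D\notin\mc{C}$, every member of $\mc{C}$ is $D$-free; hence for a component $P$ of $G-X$ and $x\in X$, the graph $G[\{x\}\cup V(P)]$ lies in $\mc{C}$ (it meets $X$ only in $x$) and is $D$-free, which forces $x$ to have at most two neighbours in $P$ --- otherwise a Steiner tree of three such neighbours in the connected graph $P$, together with the edges to $x$, would contain three internally disjoint $x$--$v$ paths, two of length at least~$2$, i.e.\ a subdivision of $D$. So each component of $G-X$ sends at most $2N$ edges to $X$. (b) If a pair $\{x,y\}\subseteq X$ had $n_0$ distinct components of $G-X$ among its $G$-neighbours, those components would yield $n_0$ internally disjoint $x$--$y$ paths of length at least~$2$, a subdivision of $K_{2,n_0}$ in $G$ --- impossible; so at most $\binom{N}{2}(n_0-1)$ components of $G-X$ have two or more neighbours in $X$. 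With these in hand I partition $V(G)$ by choosing, for each component $P$ of $G-X$ that meets $X$, a neighbour $x_P\in X$, taking each part to be some $x\in X$ together with all $P$ with $x_P=x$, and making each component disjoint from $X$ its own part; every part meets $X$ in at most one vertex and hence induces a graph in $\mc{C}$. The crossing edges are the edges inside $X$ (at most $\binom{N}{2}$) and, for each of the at most $\binom{N}{2}(n_0-1)$ components with two or more $X$-neighbours, the at most $2N$ edges to $X\setminus\{x_P\}$; this bounds $\eta_{\mc{C}}$ on $\mc{G}$ by a constant depending only on $\mc{F}$ and $\mc{G}$, closing the cycle. I expect fact (a) --- converting $D\notin\mc{C}$ into the two-neighbours bound --- to be the crux, since it is exactly what separates $\eta_{\mc{C}}$ from $e_{\mc{C}}$ and makes the hypothesis $D\notin\mc{C}$ essential (Proposition~\ref{prop:example} shows that without it, as for outerplanar graphs, $\eta$ is uncontrolled); fact (b) and the edge count are routine by comparison.
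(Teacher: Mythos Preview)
Your proof is correct. The implications $(iii)\Rightarrow(ii)\Rightarrow(i)$ and $(i)\Rightarrow(iv)$ match the paper's argument essentially verbatim. For the main implication $(iv)\Rightarrow(iii)$, however, you take a genuinely different route after obtaining the set $X$ from Corollary~\ref{c:tworoots}. The paper argues, for each $x\in X$, that there cannot be many pairwise \emph{edge-disjoint} paths from $x$ to $X\setminus\{x\}$: otherwise pigeonhole forces many such paths to a single $x'$, then the $K_{2,\ell}$ exclusion bounds the internally vertex-disjoint ones, and two further paths sharing internal vertices with one of those are combined into a subdivision of $D$ meeting $X$ only in $x$. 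Menger's theorem (Theorem~\ref{thm:Menger}) then yields a set $W(x)$ with few edges across its boundary, and the $W(x)$'s are layered into a partition. Your argument instead works directly with the components of $G\setminus X$: fact~(a) bounds the number of neighbours of each $x\in X$ in each component by $2$ via a short Steiner-tree construction of a $D$-subdivision, and fact~(b) uses the $K_{2,n_0}$ exclusion separately to bound the number of components with two $X$-neighbours. This avoids Menger's theorem entirely and keeps the two hypotheses ($D\notin\mc{C}$ and $\mc{K}_{2,*}\not\subseteq\mc{G}$) cleanly decoupled; it also yields an explicit bound $\eta_{\mc{C}}(G)\le\binom{N}{2}+2N\binom{N}{2}(n_0-1)$ that is easy to read off. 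The paper's approach, by contrast, gives a partition into at most $|X|$ parts whose boundaries are controlled by edge-cuts rather than component structure, which is a somewhat more flexible primitive but is not needed here.
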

	
\begin{proof}
	By Observation~\ref{obs:basic}, (iii) implies (ii), and (ii) implies (i). 
	
	By Lemma~\ref{l:lower}, $\nu_{\mc{C}}$ is unbounded on $\Fan(F,S)$ for every $F \not \in \mc{C}$ and every $S \subseteq F$ with $|S|\le 1$. Moreover, $K_{2,3} \not \in \mc{C}$ and $\Fan(K_{2,3},S) \subseteq \mc{K}_{2,*} $ where $S$ is the set of degree-$3$ vertices of $K_{2,3}$. It follows that (i) implies (iv).
	
	It remains to show that (iv) implies (iii). Let  $\ell$ be a positive integer chosen so that $\fan(F,S,\ell) \not \in \mc{G}$ for every pair $F,S$ as in (iv) 
	and $K_{2,\ell} \not \in \mc{G}$. By Corollary~\ref{c:tworoots}, there exists $N$ such that for every graph $G \in \mc{G}$ there exists $X \subseteq V(G)$ with $|X| \leq N$ such that $|V(J) \cap X|  \geq 2$ for every subgraph  $J$ of $G$ with $J \not \in \mc{C}$.
	
	Given $G \in \mc{G}$ and $X \subseteq V(G)$ as above we will bound $\eta_{\mc{C}}(G)$ by a function of $N$ and $\ell$.
	 
	Fix $x \in X$, let $X' = X - \{x\}$, and suppose that there exists a collection of pairwise edge-disjoint paths $\mc{P}$ in $G$ with $|\mc{P}| \geq 2\ell|X|$ such that each $P \in \mc{P}$ has one end in $x$, the other end in $X'$, and is internally disjoint from $X'$. By the pigeonhole principle, there exist $x' \in X'$ and $\mc{P}' \subseteq \mc{P}$ such that $|\mc{P}'| \geq 2\ell+1$ and $x'$ is an end of every $P \in \mc{P}'$. Let $\mc{P}'' \subseteq \mc{P}'$  be chosen maximal so that the paths in $\mc{P}''$ are pairwise internally vertex-disjoint. 
	Then $|\mc{P}''|\leq \ell$, as otherwise $G$ contains a subdivision of $K_{2,\ell}$ contrary to our assumptions. 
	As every path in $\mc{P}' - \mc{P}''$ shares an internal vertex with some path in $\mc{P}''$, there exist $P \in \mc{P}''$ and distinct $P',P'' \in \mc{P}' - \mc{P}''$  such that each of $V(P')$ and $V(P'')$ contains an internal vertex of $P$.
	 Let $Q$ be a subpath of $P'$ chosen minimal so that $Q$ has one end $x$ and the other end in $V(P)-\{x\}$. 
	Let $R$ be a subpath  of $P''$ that is minimal so that $R$ has one end $x$ and the other end in $V(P) \cup V(Q) - \{x\}$. It is easy to see that $(P \cup Q \cup R) \setminus x'$ contains a subdivision of $D$, with $x$ corresponding to one of the degree-$3$ vertices of the diamond, and an end of either $Q$ or $R$ corresponding to the second one. Thus there exists a subdivision $J$ of a graph $D \in \mc{C}$, so that $|V(J) \cap X| \leq 1$, a contradiction. 
	
	Therefore, 
	the collection $\mc{P}$ satisfying the above assumptions does not exist. 
	By Theorem~\ref{thm:Menger}, there exists a set $W(x) \subseteq V(G)$ such that $W(x) \cap X = \{x\}$ and there are fewer than $2\ell|X|$ edges with one end in $W(x)$ and the other in $V(G)-W(x)$.
	Let $E(x)$ denote this set of edges.
	
	We are now ready to define a partition of $V(G)$ which will certify that $\eta_{\mc{C}}(G)$ is bounded. 
	Let $X=\{x_1,\ldots,x_{|X|}\}$, and define $V_i=W(x_i)- \bigcup_{j<i}W(x_j)$ for $1 \leq i < |X|$ and $V_{|X|}=V(G)-\bigcup_{j < |X|}W(x_j)$. Then $(V_1,\ldots,V_{|X|})$ is indeed a partition of $V(G)$, and $V_i \cap X = \{x_i\}$ for every~$i$. By the choice of $X$ it follows that $G[V_i]$ is $\mc{F}$-free for every~$i$, and so $G[V_i] \in \mc{C}$. Moreover, every edge of $G$ with ends in different parts of our partition belongs to  $\bigcup_{i=1}^{|X|-1}E(x)$. 
	Thus there are at most $(|X|-1)(2\ell |X|-1)$ such edges. 
	It follows that  $\eta_{\mc{C}}(G) \leq (N-1)(2\ell N-1)$ for every $G \in \mc{G}$ and (iii) holds. 	
\end{proof}	
\forestedge*
\begin{proof}%
	Theorem~\ref{thm:edge} applied with $\mc{F}=\{K_3\}$ implies that the conditions 1, 2, and 3 of Theorem~\ref{thm:forest_edge} are all equivalent to
	\begin{enumerate}
		\item[4$'$.] $\Fan(F,S) \not\subseteq \mathcal G$ for every $F$ obtained from  $K_3$ by subdividing at most one edge and  $S \subseteq V(F)$ with $|S| \leq 1$, and
		$\mc{K}_{2,*} \not\subseteq \mathcal G$.
	\end{enumerate}
	It remains to observe that the above condition is equivalent to condition 4 of Theorem~\ref{thm:forest_edge}. 
	Indeed, $\mc{K}_{2,*}=\Fan(K_3,S)$ for $S \subseteq V(K_3)$ with $|S|=2$. Moreover, every ideal $\Fan(F,S)$ described in the condition above contains an ideal   $\Fan(K_3,S')$ for some  $S' \subseteq V(K_3)$ with $|S'| \leq 1$.
\end{proof}	
\diamondedge*
\begin{proof}%
	Our argument is essentially identical to the proof of Theorem~\ref{thm:forest_edge} above. 	Theorem~\ref{thm:edge} applied with $\mc{F}=\{D\}$ implies that the conditions 1, 2, and 3 of Theorem~\ref{thm:diamond_edge} are all equivalent to
	\begin{enumerate}
		\item[4$'$.] $\Fan(F,S) \not\subseteq \mathcal G$ for every $F$ obtained from  $D$ by subdividing at most one edge and  $S \subseteq V(F)$ with $|S| \leq 1$, and
		$\mc{K}_{2,*} \not\subseteq \mathcal G$.
	\end{enumerate}
	It is trivial that the condition 4$'$ implies the condition 4 of Theorem~\ref{thm:diamond_edge}.
	Observe that for every pair $(F,S)$ as in the above condition we have  $\Fan(D,S') \subseteq \Fan(F,S)$ for some  $S' \subseteq V(D)$ with $|S'| \leq 1$. 
	It follows that the above condition 4$'$ is equivalent to the condition 4 of Theorem~\ref{thm:diamond_edge}, as desired.
\end{proof}	

\section{Bounded vertex-brittleness}\label{sec:vertex}

In this section we prove a technical characterization of minimal ideals with unbounded $\C$-vertex-brittleness.

\begin{LEM}\label{lem:manypath}
	Let $n$ be an integer.
	Let $G$ be a graph and $u,v$ be non-adjacent vertices of $G$ such that $G\setminus \{u,v\}$ is connected.
	If $G$ has $3n$ internally disjoint paths from  $u$ to $v$, 
	and every subgraph of $G$ isomorphic to a subdivision of $K_{2,3}$ contains both $u$ and $v$,
	then  $G$ has a subgraph isomorphic to a subdivision of $\fan(K_{2,3},S,n)$, where $S$ consists of two degree-$2$ vertices of $K_{2,3}$.
\end{LEM}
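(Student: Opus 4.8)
Here is the plan.

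\smallskip
\noindent\textbf{Setup and reduction to a packing problem.} Fix $3n$ internally disjoint $u$--$v$ paths $Q_1,\dots,Q_{3n}$; since $u\not\sim v$ each has length at least $2$, so its interior $Q_j'$ is a nonempty path inside the connected graph $G':=G\setminus\{u,v\}$. I would build the desired subgraph as $n$ pairwise internally disjoint \emph{gadgets} glued along $u$ and $v$, where a gadget is the union $Q_a\cup Q_b\cup L$ of two of the $Q_j$'s and a path $L\subseteq G'$ of length at least $2$ joining an interior vertex $\alpha$ of $Q_a$ to an interior vertex $\beta$ of $Q_b$, otherwise disjoint from $Q_a\cup Q_b$. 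Such a gadget is exactly three internally disjoint $\alpha$--$\beta$ paths, each of length at least $2$ (one through $u$, one through $v$, one along $L$), hence a subdivision of $K_{2,3}$ in which $u$ and $v$ are two of the three degree-$2$ vertices. If the $n$ gadgets use pairwise disjoint index-triples and their links are pairwise disjoint (and disjoint from the ``unused'' $Q_j$'s), then the gadgets meet pairwise exactly in $\{u,v\}$ and their union is a subdivision of $\fan(K_{2,3},S,n)$ with $S$ the pair of degree-$2$ vertices into which $u$ and $v$ are mapped.

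\smallskip
\noindent\textbf{Contracting the paths.} Contract each $Q_j'$ to a single vertex $q_j$ inside $G'$, obtaining a connected graph $\hat G$ with the $3n$-element set $M:=\{q_1,\dots,q_{3n}\}$. A path of length at least $2$ in $\hat G$ with both ends in $M$ lifts to a path of length at least $2$ in $G'$ between interior vertices of two distinct $Q_j$'s and meeting those $Q_j$'s only at its endpoints; vertex-disjoint such paths lift to vertex-disjoint links, each passing through the interiors of whatever $q_c\in M$ lies in its interior. Thus the lemma reduces to the \emph{Key Claim}: $\hat G$ contains $n$ pairwise vertex-disjoint paths, each of length at least $2$ and with both ends in $M$.

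\smallskip
\noindent\textbf{Using the hypothesis.} The hypothesis enters through the statement: \emph{no vertex $z$ of $\hat G$ has three internally disjoint paths to three distinct vertices of $M\setminus\{z\}$}. Indeed, such paths lift to three paths in $G'$ issuing from (a vertex inside) $z$ and reaching the interiors of three distinct $Q_j$'s; appending to each the sub-path of the corresponding $Q_j$ running to $u$ yields three internally disjoint paths of length at least $2$ in $G\setminus v$, hence a subdivision of $K_{2,3}$ missing $v$ --- contradicting the hypothesis. For $z\notin M$ this is immediate; for $z=q_j\in M$ the three lifted paths may attach to the path $Q_j'$ at up to three points, and one argues by a short case analysis on the cyclic order of these attachment points along $Q_j'$, routing the ``extra'' path back along $Q_j'$ and, when the common/closest attachment point is the neighbour of $u$ on $Q_j$, routing from $v$ instead, so that in every case one obtains three internally disjoint length-$\ge 2$ paths in $G\setminus v$ or in $G\setminus u$. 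Granting this, in any spanning tree $\hat T$ of $\hat G$ the minimal subtree $\hat T_M$ spanning $M$ has maximum degree at most $2$ (a degree-$\ge 3$ vertex of $\hat T_M$ would yield three internally disjoint paths in $\hat T$ to three distinct vertices of $M$), so $\hat T_M$ is a \emph{path} containing all of $M$, with both endpoints in $M$. Writing the vertices of $M$ in the order $q_{(1)},\dots,q_{(3n)}$ along this path, the $n$ sub-paths from $q_{(3i-2)}$ to $q_{(3i)}$ are pairwise vertex-disjoint and each has length at least $2$ (it contains $q_{(3i-1)}$ in its interior); this proves the Key Claim.

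\smallskip
\noindent\textbf{Assembly.} Lifting the $i$-th sub-path gives a link $L_i$ of length at least $2$ from $\alpha_i\in Q_{(3i-2)}'$ to $\beta_i\in Q_{(3i)}'$; let the $i$-th gadget be $Q_{(3i-2)}\cup Q_{(3i)}\cup L_i$. Disjointness of the sub-paths in $\hat T_M$ (and disjointness of the index-triples) gives that the gadgets meet pairwise exactly in $\{u,v\}$, so their union is the required subdivision of $\fan(K_{2,3},S,n)$. The main obstacle is the ``no vertex has three disjoint paths to $M$'' step, and in particular handling $z\in M$ cleanly: one has to choose the attachment points on $Q_j'$ and the endpoint of the three paths so that they stay internally disjoint while all having length at least $2$, which is where the parity/position bookkeeping on $Q_j'$ is needed.
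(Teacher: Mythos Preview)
Your approach is essentially the same as the paper's: contract the interiors $Q_j'$ to vertices in $G\setminus\{u,v\}$, take a minimal tree spanning the contracted vertices, show it is a path by using the hypothesis to rule out degree-$\ge 3$ vertices, and then group consecutive triples along this path to assemble the $n$ copies of $K_{2,3}$ glued at $u$ and $v$. The paper carries this out with the same lifting idea and the same case split on whether the branch vertex lies in $M$ or not.

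Two small remarks. First, your phrase ``cyclic order of these attachment points along $Q_j'$'' should be ``linear order''; $Q_j'$ is a path. Second, your final worry is unnecessary: once you pick the middle attachment point $a$ on $Q_j'$ and build the three $a$--$u$ paths (along $Q_j'$ to the other attachment points, then through the lifted links, then along the corresponding $Q_{j_k}$ to $u$), each path automatically has length at least $2$, since the lifted link alone already has length $\ge 1$ with both ends in $G\setminus\{u,v\}$ and the final edge to $u$ adds one more. No rerouting to $v$ is ever needed. The paper sidesteps this bookkeeping by choosing, in the tree, the three branches to the \emph{nearest} $M$-vertices (so the branches have no internal $M$-vertices), which slightly simplifies the lift; but your version is also correct.
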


\begin{proof}
	Let $\Gamma_1,\Gamma_2,\ldots,\Gamma_{3n}$ be internally disjoint paths in $G$ from $u$ to $v$, and let $\Gamma_i'=\Gamma_i \setminus \{u,v\}$ for $i = 1,2,\ldots,3n$.
	Since $u$ and $v$ are non-adjacent, $\Gamma_i'$ contains at least one vertex.
	
	Let $G'$ be the graph obtained from $G\setminus \{u,v\}$ by contracting $\Gamma_i'$ for all $i=1,2,\ldots,3n$. 
	Let $z_i$ be the new vertex in $G'$ obtained by contracting $\Gamma_i'$, and let $Z=\{z_1,z_2,\ldots,z_{3n}\}$.
	Since $G\setminus \{u,v\}$ is connected, so is $G'$.
	Let $T$ be a minimal tree in $G'$ containing all vertices in $Z$.
	Since $T$ is minimal, every leaf of $T$ is contained in  $Z$.

	We first claim that  $T$ is a path.
	
	Suppose $T$ contains a vertex $x$ of degree at least $3$.
	We choose three vertices $z_{i_1}, z_{i_2}, z_{i_3} \in Z$ such that the paths in $T$ joining $x$ and $z_{i_1}, z_{i_2},z_{i_3}$ are internally disjoint, and have no internal vertices in $Z$.
	We may assume $z_{i_1}=z_1$, $z_{i_2}=z_2$ and $z_{i_3}=z_3$ by relabelling $\Gamma_1,\Gamma_2,\ldots,\Gamma_{3n}$ if necessary.
	
	For $j=1,2,3$, let $Q_j$ be the path of $G$ consisting of the edges of the path from $x$ to $z_j$ in $T$.
	Assume first that $x\in Z$, then, without loss of generality, let $x=z_4$.
	It is easy to see that 
	$Q_1$, $Q_2$, and $Q_3$ are 
	internally disjoint. %
	Let $x_j$ be the end vertex of $Q_j$ in $V(\Gamma_4')$. %

	We may assume by permuting $\Gamma_1$, $\Gamma_2$, and $\Gamma_3$ if necessary that $x_2$ is contained in the subpath of $\Gamma_4'$ from $x_1$ to $x_3$. 
	Note that $x_1$, $x_2$, and $x_3$ are not necessarily distinct.
	For $j=1,2,3$, 
	the union
	$\Gamma_j \cup Q_j \cup \Gamma_4'$ is a tree, and so it contains a unique path from $u$ to $x_2$.
	The three paths form a subdivision of $K_{2,3}$ where $x_2$ and $u$ are the degree-$3$ vertices in $G \setminus v$, a contradiction to the assumption that every subgraph of $G$ isomorphic to a subdivision of $K_{2,3}$
	contains both $u$ and $v$.
	
	So, $x\notin Z$.
	In this case, $\Gamma_j \cup Q_j$ contains a path from $u$ to $x$ disjoint from $v$ for every $j=1,2,3$. The resulting paths are internally disjoint and so $G \setminus v$ once again contains   a subdivision of $K_{2,3}$, a contradiction, finishing the proof of the claim.

	By relabelling if necessary, we may assume that $z_1,z_2,\ldots,z_{3n}$ lie on $T$ in this order.
	For $i=1,2,\ldots,3n-1$, 
	let $W_i$ be the path in $G\setminus \{u,v\}$ corresponding to the path in $T$ from $z_i$ to $z_{i+1}$,
	and let $x_i$ and $y_i$ be the ends of $W_i$ in $\Gamma_i'$ and $\Gamma_{i+1}'$, respectively.
	Clearly, $W_1,W_2,\ldots,W_{3n-1}$ are internally disjoint.
	For each $j=1,2,\ldots,n$, 
	the union of $\Gamma_{3j-2}, \Gamma_{3j}$, $W_{3j-2}$, $W_{3j-1}$ and the subpath in $\Gamma_{3j-1}$ from $y_{3j-2}$ to $x_{3j-1}$ forms a subdivision $R_j$ of $K_{2,3}$.
	Then, the union of $R_1,R_2,\ldots,R_n$ is isomorphic to 
	the desired subdivision of $\fan(K_{2,3},S,n)$, where $S$ is as in the lemma statement.
\end{proof}

	 For a graph $G$  and  a degree-$2$ vertex $v$ of $G$, we denote by $G/v$ the graph obtained from $G$ by contracting one of the edges incident with $v$. Note that $G/v$ is a topological minor of $G$.
 
	We now present the main technical definition of this section.
	Let $H$ be a graph. 
	An \emph{$H$-snare} is a pair $(J,S)$ of a graph $J$ and $S\subseteq V(J)$ such that $H$ is a topological minor of $J$, $S$ is an independent set in $J$, and $J \setminus S$ is connected.
  We say that $(J,S)$ is an \emph{$H$-trap} if $(J,S)$ is an $H$-snare, and $(J,S)$ is minimal in the following sense: $(J', S \cap V(J'))$ is not an $H$-snare for every proper subgraph $J'$ of $J$, and $(J/v,S)$ is not an $H$-snare for every vertex $v \in V(J)-S$ of degree two. 
  We will characterize minimal ideals with unbounded $\C$-vertex-brittleness in terms of $\Fan(J,S)$ for traps $(J,S)$ in Theorem~\ref{t:vbrittle}.

The next lemma shows that for every graph $H$ the size of every $H$-trap $(J,S)$ is bounded by a function of $H$
and $|S|$.

\begin{LEM}\label{l:boundtrap} Let $H$ be a connected graph and let $(J,S)$ be an $H$-trap. Then $|V(J)| \leq 5|E(H)|+4|V(H)|+9|S|$.
\end{LEM}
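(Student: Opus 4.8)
The plan is to bound $|V(J)|$ by exhibiting an explicit small subgraph of $J$ that already witnesses $(J,S)$ being an $H$-snare, and then to use the minimality of a trap to conclude that $J$ has essentially no vertices beyond this subgraph. First I would fix an embedding $\phi\colon H\emb J$, so that $\phi(H)$ is a subdivision of $H$ inside $J$ with at most $|V(H)| + \sum_{e\in E(H)} (\text{length of }\phi(e) - 1)$ vertices; the branch vertices number $|V(H)|$ and I will need to control the lengths of the subdivided paths. Next, I would pick a minimal tree $R$ in $J\setminus S$ that connects all of $\phi(V(H))\setminus S$ together with one private vertex for each $s\in S$ chosen adjacent to $s$ (this is possible since $J\setminus S$ is connected and $S$ is independent, so each $s$ has all its neighbours in $J\setminus S$). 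Setting $J_0 = \phi(H)\cup R\cup S$ together with one edge from each $s$ to its private neighbour, the pair $(J_0, S\cap V(J_0))=(J_0,S)$ is again an $H$-snare: $H$ is a topological minor via $\phi$, $S$ is independent, and $J_0\setminus S$ is connected because $R$ was chosen to connect all the pieces. By minimality of the trap, $J = J_0$.

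The remaining work is to bound $|V(J_0)|$. The tree $R$ is a minimal Steiner tree for a set of at most $|V(H)| + |S|$ terminals in $J\setminus S$, so its branch vertices (degree $\geq 3$) number at most $|V(H)|+|S|-2$, and $R$ consists of at most $2(|V(H)|+|S|) - 3$ internally-disjoint paths between terminals and branch vertices. Here is where the second minimality condition of a trap enters: no degree-two vertex $v\in V(J)-S$ can be contracted without destroying the snare property. A vertex of $\phi(H)$ internal to some $\phi(e)$ and not on $R$ and not an end of an $S$-edge has degree $2$ in $J_0$; contracting it still leaves an $H$-snare (the embedding, independence of $S$, and connectedness of $J_0\setminus S$ are all preserved), contradicting trap-minimality — so each subdivided edge $\phi(e)$ has length $1$ unless its internal vertices are "used" by $R$ or by an $S$-edge. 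Similarly, any internal vertex of one of the constituent paths of $R$ that is not on $\phi(H)$ and not an end of an $S$-edge has degree $2$ and is contractible, so each such path has length $1$ unless its internal vertices meet $\phi(H)$ or $S$-edges. Thus every edge/path in the decomposition of $J_0$ has length $1$ except at a bounded number of "charged" places, and one counts: $|E(H)|$ subdivided edges, $O(|V(H)|+|S|)$ tree-paths, each of which can be forced to have length $>1$ only by absorbing a vertex that is itself a branch vertex of $\phi(H)$, a branch vertex of $R$, or an endpoint of an $S$-edge — there are $O(|V(H)|+|S|)$ such vertices in total. Tallying these contributions with the crude constants should land within the claimed bound $5|E(H)|+4|V(H)|+9|S|$.

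The step I expect to be the main obstacle is the bookkeeping that turns "every contractible degree-two vertex gives a contradiction" into a clean count, because the subgraph $J_0$ is not simply a disjoint union of paths: $\phi(H)$ and $R$ can share vertices in complicated ways, a vertex can lie on $\phi(e)$ for one edge $e$ and simultaneously be internal to a path of $R$, and the $S$-edges attach at further points. The right way to handle this is to view $J_0$ as obtained from a "core" multigraph $C$ — whose vertices are $\phi(V(H))$, the branch vertices of $R$, and the neighbours of $S$ used by the $S$-edges, and whose edges are the maximal subpaths between consecutive core vertices — and to argue that every edge of $C$ is subdivided (in $J_0$) at most once, since two consecutive internal vertices on a core edge would give a contractible degree-two vertex. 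Then $|V(J_0)| \leq |V(C)| + |E(C)|$, and bounding $|V(C)|$ (at most $|V(H)| + (|V(H)|+|S|-2) + |S|$) and $|E(C)|$ (the core $\phi(H)$ contributes $|E(H)|$ core-edges, $R$ contributes $O(|V(H)|+|S|)$, and $S$ contributes $|S|$ pendant edges) gives the result. I would double check that contracting a degree-two vertex never accidentally destroys independence of $S$ (it cannot, as $v\notin S$ and contracting an edge at $v$ merges $v$ with a neighbour, which could be in $S$, but then the other neighbour of $v$ becomes adjacent to that $S$-vertex — one must therefore be slightly careful to contract the edge of $v$ \emph{not} leading to $S$, which is exactly why the definition of $G/v$ allows a choice of which incident edge to contract). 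Keeping the constants loose throughout should comfortably absorb any slack.
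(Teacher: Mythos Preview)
Your overall strategy --- exhibit a small $H$-snare inside $J$ and invoke trap-minimality to conclude $J$ equals it --- is the paper's strategy as well. The substantive gap is in what you ask $R$ to connect. You take $R$ to be a tree in $J\setminus S$ spanning $\phi(V(H))\setminus S$ together with one neighbour per vertex of $S$, and then assert that $J_0\setminus S$ is connected ``because $R$ was chosen to connect all the pieces''. But $J_0\setminus S=(\phi(H)\setminus S)\cup R$, and a component of $\phi(H)\setminus S$ that contains no branch vertex need not meet $R$ at all. Such components arise whenever a subdivided edge $\phi(e)$ carries two internal vertices of $S$: the stretch of $\phi(e)$ between two consecutive such $S$-vertices is a component of $\phi(H)\setminus S$ containing no vertex of $\phi(V(H))$. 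Your private neighbours do not rescue this, since nothing forces them to lie on $\phi(H)$, and in $J_0\setminus S$ the added $S$-edges are deleted anyway. So $(J_0,S)$ need not be a snare and you cannot conclude $J=J_0$. The fix is exactly what the paper does: let $C_1,\dots,C_m$ be the components of $\phi(H)\setminus S$ and choose the connector (the paper's $J'$) to be a minimal connected subgraph of $J\setminus S$ meeting every $C_i$; one then bounds $m\le |V(H)|+|E(H)|+2|S|$, which replaces your terminal count $|V(H)|+|S|$ and is why $|E(H)|$ enters the final estimate.

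Your core-multigraph bookkeeping is also looser than advertised. Your list of core vertices omits the places where $R$ meets $\phi(H)$ away from branch vertices (such a vertex has degree $\ge 3$ in $J_0$ and must be a core vertex for your subpath decomposition to make sense), and handling $S$-vertices internal to a core edge requires the observation that in a nontrivial trap no degree-$2$ vertex outside $S$ has both neighbours in $S$ (else it would be isolated in $J\setminus S$). The paper sidesteps all of this with one clean claim: every vertex of $J$ outside $\phi(V(H))\cup S$ is incident with an edge of $E(J')\setminus E(\phi(H))$; this gives $|V(J)|\le |V(H)|+|S|+2|E(J')\setminus E(\phi(H))|$ directly, and the number of such edges is at most $2m$ by contracting each $J'\cap C_i$ to a point and bounding the resulting tree.
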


\begin{proof}
	Let $\phi: H \emb J$  be an embedding, and let $H'=\phi(H)$. Let $C_1$, $C_2$, $\ldots$, $C_m$ be all the components of $H' \setminus S$. 
	If $m = 1$ then by the definition of an $H$-trap, $J=H'$. In this case it is easy to see that $|\phi(e)\cap S|\le 1$ and  $|V(\phi(e))| \leq 3$ for every $e \in E(H)$. So the lemma holds. 
	Thus we assume that $m \geq 2$.
	
 	Note that for every component $C$ of $H' \setminus S$ either we have $\phi(v) \in V(C)$ for some $v \in V(H)$, or  $C$ is a subgraph of $\phi(uv) \setminus \{\phi(u), \phi(v)\}$ for some $uv \in E(H)$. In the second case, either  $C = \phi(uv) \setminus \{\phi(u), \phi(v)\}$, or there exists $s \in S \cap (V(\phi(uv)) - \{\phi(u), \phi(v)\})$ such that $s$ has a neighbor in $C$. As every such vertex $s$ has degree two in $H'$, we conclude that
        \[m \leq |V(H)|+|E(H)|+2|S|.\]
	
	Let $J'$ be a subgraph of $J \setminus S$ chosen so that $J'$ is connected, $V(J') \cap V(C_i) \neq \emptyset$ for every $1 \leq i \leq m$,  $|E(J') - E(H')|$ is minimum, and subject to that $|E(J')|$ is minimum.
	
	Then $(H' \cup J', S \cap V(H'))$ is an $H$-snare, and so $J=H' \cup J'$. Moreover, $J'$ is a tree by minimality of $E(J')$.
	
	Suppose for a contradiction that $J' \cap C_i$ is disconnected for some $1 \leq i \leq m$. Then there exists a cycle $F$ in $J' \cup C_i$ such that $E(F) - E(H') \neq \emptyset$. 
	Then $J'' = (J' \cup C_i) \setminus e$ contradicts the choice of $J'$ for every $e \in E(F)-E(H')$.  Thus $J' \cap C_i$ is  connected for every $1 \leq i \leq m$.
	
	Let $J_i = J' \cap C_i$, and let $T$ be a tree obtained from $J'$ by contracting $J_i$ to a single vertex for each $i$. 
	Then every leaf or degree-$2$ vertex of $T$ corresponds to $J_i$ for some $i$ by the choice of $J'$ and the definition of an $H$-trap. 
	For every tree, the number of vertices of degree at least three in the tree is less than that of leaves.
	Since $T$ has at most $m$ vertices of degree at most $2$, we deduce that $|V(T)| \leq 2m$, implying that $|E(J')-E(H')| \leq 2m$.
	
	We claim that
        if a vertex $v$ of $J$ is not in $\phi(V(H))\cup S$, then
        it is incident with an edge in $E(J')-E(H')$.
        Suppose not.
	If $\deg_J(v) \ge 3$, then since $v$ is not incident with any edge in $E(J')-E(H')$ and $J=H' \cup J'$, 
	 $\deg_{H'}(v)\ge 3$, which implies that $v \in \phi(V(H))$, a contradiction.
	So, $\deg_J(v) \le 2$. If $\deg_J(v)\le 1$, then since $v\notin \phi(V(H))$, $(J\setminus v,S-\{v\})$ is an $H$-snare, a contradiction.
	If $\deg_J(v)=2$, then $v \in \phi(uw)\setminus \{\phi(u), \phi(w)\}$ for some $uw\in E(H)$. 
	And since $v\notin S$, $(J/v, S)$ is an $H$-snare, a contradiction.
	Therefore, the claim holds.

	By the above claim,
	we have
	\begin{multline*}
	|V(J)|\le 2|E(J')-E(H')|+|V(H)|+|S|\\
		 \le  4m+|V(H)|+ |S|\le 5|V(H)|+4|E(H)|+9|S|.
	\end{multline*}
	This completes the proof.
\end{proof}
		
The next theorem is the main result of this section and the key step in the proof of Theorems~\ref{thm:forest_vertex},~\ref{thm:diamond_vertex} and~\ref{thm:outer_vertex}.				

\begin{THM}\label{t:vbrittle} Let $\mc{F}$ be a finite collection of $2$-connected graphs. Let $\mc{C}$ be an ideal consisting of all $\mc{F}$-free graphs, and suppose that $K_{2,3} \not \in \mc{C}$. Then $\kappa_{\mc{C}}$ is bounded on an ideal $\mc{G}$ if and only if for every $F \in \mc{F}$ and every $F$-trap $(J,S)$ we have $\Fan(J,S) \not\subseteq \mc{G}$. 
\end{THM}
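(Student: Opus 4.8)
The plan is to prove the two implications of the equivalence separately; the forward direction is short and the backward one carries essentially all the work. A tool used throughout is the following \emph{reduction principle}: from any $F$-snare $(J,S)$ one obtains, by deleting edges and contracting degree-$2$ vertices outside $S$, an $F$-trap $(J',S')$ with $S'\subseteq S$ and with $\fan(J',S',k)$ a topological minor of $\fan(J,S,k)$ for every $k$, so $\Fan(J',S')\subseteq\Fan(J,S)$; hence if $\mc{G}$ avoids $\Fan$ of every $F$-trap (over $F\in\mc{F}$), it avoids $\Fan$ of every $F$-snare. For the forward direction, argued contrapositively: if $\Fan(J,S)\subseteq\mc{G}$ for some $F$-trap $(J,S)$, then $J\notin\mc{C}$ since $F\in\mc{F}$ is a topological minor of $J$; trap minimality rules out isolated vertices of $S$, so $J$ is connected; $S$ is independent and $J\setminus S$ connected by definition of a snare; and $S\subsetneq V(J)$ because the $2$-connected graph $F$ has an edge. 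Thus Lemma~\ref{l:lower} gives $\kappa_{\mc{C}}(\fan(J,S,\ell))\ge\ell$ for every $\ell$, all these graphs lie in the ideal $\mc{G}$, and $\kappa_{\mc{C}}$ is unbounded on $\mc{G}$.

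For the backward direction, assume $\mc{G}$ avoids $\Fan$ of every $F$-trap, so by the reduction principle (and because $\mc{G}$ is an ideal) every $F$-snare $(J,S)$ comes with an integer $\ell(J,S)$ such that $\fan(J,S,k)\notin\mc{G}$ for all $k\ge\ell(J,S)$. By Lemma~\ref{l:boundtrap} there are only finitely many $F$-traps $(J,S)$ with $|S|\le1$ over all $F\in\mc{F}$, so I fix $\ell^{*}$ exceeding all of their $\ell(J,S)$. Since $K_{2,3}\notin\mc{C}$, some member of $\mc{F}$ is a topological minor of $K_{2,3}$, hence subcubic and planar, so Corollary~\ref{c:tworoots} applies with parameter $\ell^{*}$. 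Its first outcome is impossible: a subdivision of $\fan(F',S',\ell^{*})$ in $G$, with $F'$ obtained from a member of $\mc{F}$ by subdividing at most one edge and $|S'|\le1$, would make $(F',S')$ a snare for that member, forcing (by the reduction principle) $\fan(J,S,\ell^{*})\in\mc{G}$ for some $F$-trap with $|S|\le1$, contradicting the choice of $\ell^{*}$. So the second outcome holds: there is $X\subseteq V(G)$ with $|X|\le N:=N(\mc{F},\ell^{*})$ meeting every subgraph of $G$ not in $\mc{C}$ in at least two vertices. Let $D_1,\dots,D_p$ be the components of $G\setminus X$, put $X_i=X\cap N_G(V(D_i))$, and let $H_i$ consist of $D_i$ together with all $G$-edges between $D_i$ and $X$; in the partition of $E(G)$ into the classes $E(H_i)$ and the singletons $\{e\}$ for $e\in E(G[X])$, only vertices of $X$ meet two classes, and $E(H_i)$ is a $\mc{C}$-class whenever $H_i\in\mc{C}$. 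Since $X_i$ is independent in $H_i$ and $H_i\setminus X_i=D_i$ is connected, for a \emph{bad} piece (one with $H_i\notin\mc{C}$) the pair $(H_i,X_i)$ is an $F$-snare and $|X_i|\ge2$. It therefore remains to bound the number of bad pieces and to bound $\kappa_{\mc{C}}(H_i)$ for each, since then, refining the partition inside the bad pieces, $\kappa_{\mc{C}}(G)\le N+(\text{\# bad pieces})\cdot\max_i\kappa_{\mc{C}}(H_i)$.

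For the count, each bad $(H_i,X_i)$ contains an $F$-trap $(J_i,S_i)$ with $S_i\subseteq X_i\subseteq X$, realized with its $S_i$-part on the fixed subset $S_i$ of $X$ and the rest inside $D_i$; by Lemma~\ref{l:boundtrap}, $|V(J_i)|\le M(N)$ for an explicit $M(N)$, so there are finitely many isomorphism types of such a trap and at most $2^{N}$ possibilities for the subset $S_i$ of $X$. Hence too many bad pieces would, by pigeonhole and the fact that the $D_i$ are pairwise disjoint with no edges between them, yield a subdivision of $\fan(J,S,k)$ for a single trap $(J,S)$ with $k\ge\ell(J,S)$, a contradiction; so the number of bad pieces is bounded. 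For the per-piece bound, the key case is $|X_i|=2$, say $X_i=\{u,v\}$: then $u,v$ are non-adjacent in $H_i$, $H_i\setminus\{u,v\}$ is connected, and since $K_{2,3}\notin\mc{C}$ every subdivision of $K_{2,3}$ in $H_i$ contains a subgraph not in $\mc{C}$, hence meets $X\cap V(H_i)=\{u,v\}$ twice and so contains both $u$ and $v$. If $H_i$ had $3n$ internally disjoint $u$--$v$ paths with $n$ large, Lemma~\ref{lem:manypath} would produce a subdivision of $\fan(K_{2,3},S,n)$ in $H_i$ with $S$ a pair of degree-$2$ vertices of $K_{2,3}$; but $(K_{2,3},S)$ is an $F$-snare, so $\fan(K_{2,3},S,n)\notin\mc{G}$ for large $n$, a contradiction. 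Hence the number of internally disjoint $u$--$v$ paths in $H_i$ is at most some constant $c$, so by Theorem~\ref{thm:Menger} there is $Z$ with $u,v\notin Z$, $|Z|<c$, separating $u$ from $v$ in $H_i$; then any subgraph of $H_i$ that is a subdivision of a member of $\mc{F}$ is connected and contains both $u$ and $v$, hence meets $Z$, so $H_i\setminus Z\in\mc{C}$ and $\kappa_{\mc{C}}(H_i)$ is bounded in terms of $|Z|$. When $|X_i|\ge3$ I would iterate this ``small separator between two attachment vertices'' reduction (inducting on $|X_i|$), simplifying the piece at each step at the cost of boundedly many additional shared vertices.

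The main obstacle is exactly this per-piece bound: a bad piece is a graph all of whose subgraphs outside $\mc{C}$ thread through a bounded attachment set while its interior stays connected, and one must show that such a graph has bounded $\mc{C}$-vertex-brittleness. This is where the hypothesis $K_{2,3}\notin\mc{C}$ and Lemma~\ref{lem:manypath} are indispensable --- without them an unbounded ``book'' of $K_{2,3}$'s glued along two vertices could hide inside a single piece --- and where the minimality built into the definition of a trap is needed, both to make the size bound of Lemma~\ref{l:boundtrap} usable and to drive the pigeonhole bounding the number of bad pieces.
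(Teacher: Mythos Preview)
Your proposal is essentially correct and follows the same architecture as the paper's proof: the forward direction via Lemma~\ref{l:lower}, then Corollary~\ref{c:tworoots} to obtain the set $X$, a pigeonhole count on traps (bounded in size by Lemma~\ref{l:boundtrap}) to limit the number of bad $X$-bridges, and Lemma~\ref{lem:manypath} together with Menger to control each bad bridge.

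The one place where the paper is sharper is exactly the spot you flag as the ``main obstacle,'' the per-piece bound. Rather than treating $|X_i|=2$ as a base case and inducting on $|X_i|$, the paper handles all attachment sizes at once: for a bad $X$-bridge $B$ and \emph{every} pair $u,v\in X\cap V(B)$, it applies Lemma~\ref{lem:manypath} to $B\setminus(X-\{u,v\})$ (which is legitimate since $B\setminus X$ is connected and every $K_{2,3}$-subdivision there must use both $u$ and $v$), obtaining a small vertex separator $Y(B,u,v)\subseteq V(B)-X$. Setting $Y=X\cup\bigcup_{B,u,v}Y(B,u,v)$, one has $|Y|$ bounded, and every $Y$-bridge meets $X$ in at most one vertex, hence lies in $\mc{C}$; this gives $\kappa_{\mc{C}}(G)\le |Y|$ directly. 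Your induction on $|X_i|$ would have to be reformulated along these lines anyway, since after inserting a separator between one pair the new pieces have \emph{larger} total attachment set (now meeting $Z$ as well), so the naive inductive parameter does not decrease; what does decrease is the number of $X$-vertices per piece, and doing all pairs simultaneously is the clean way to exploit that. Two small further remarks: the separation step needs the \emph{vertex} form of Menger's theorem rather than the edge form stated as Theorem~\ref{thm:Menger}; and in the count of bad pieces the signature should record not just the subset $S_i\subseteq X$ but the actual bijection from the trap's $S$ to $X$ (contributing a factor of at most $N!$), since you need the embeddings to agree on $S$ to assemble a subdivision of $\fan(J,S,k)$.
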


\begin{proof}
For every $F \in \mathcal{F}$ and every $F$-trap $(J,S)$, $J$ is connected since $F$ is connected. 
So, by Lemma~\ref{l:lower}, 
$\kappa_{\mc{C}}$ is unbounded on $\Fan(J,S)$  for every $F \in \mc{F}$ and every $F$-trap   $(J,S)$. 
This implies the ``only if'' part of the theorem statement. 

Now we prove the ``if'' part.
Suppose  $\Fan(J,S) \not\subseteq \mc{G}$  for every $F \in \mc{F}$ and every $F$-trap $(J,S)$.

Let $F'$ be a graph obtained from a graph in $\mc{F}$ by subdividing at most one edge and  $S' \subseteq V(F')$ with $|S'| \leq 1$.
Then,  
either 
$(F',S')$ is an $F$-trap or
there exists $S \subseteq V(F)$ with $|S|\le 1$ such that $FAN(F,S) \subseteq FAN(F',S')$,
which implies that
$FAN(F',S') \not\subseteq \mathcal{G}$ since for every $F\in \mathcal{F}$ and $S\subseteq V(F)$ with $|S|\le 1$, $(F,S)$ is an $F$-trap. %
Thus,  there exists a positive integer $\ell$ 
such that $\fan(F',S',\ell) \not \in \mc{G}$ for every $F'$ obtained from a graph in $\mc{F}$ by subdividing at most one edge and  every $S' \subseteq V(F')$ with $|S'| \leq 1$

As 	 $\mc{F}$ contains a topological minor of $K_{2,3}$, Corollary~\ref{c:tworoots} is applicable and thus there exists $N=N(\mc{F},\ell)$ such that every $G \in \mc{G}$  has a subset $X \subseteq V(G)$ such that $|V(J) \cap X|  \geq 2$ for every subgraph  $J$ of $G$ with $J \not \in \mc{C}$.

By Lemma~\ref{l:boundtrap} there exists an absolute bound on the number of vertices of $J$ for each $F\in \mc{F}$ and each $F$-trap $(J,S)$ with $|S|\le N$.
Thus there exist $R = R(\mc{F},N)$ and a collection $\mc{J}_* = \{(J_1,S_1),\ldots,(J_R,S_R)\}$ such that every $F$-trap $(J,S)$ with $|S|\le N$ is isomorphic to an element of $\mc{J}_*$. That is, explicitly, $\mc{J}_*$ satisfies the following: for every 
$F \in \mc{F}$ and every $F$-trap $(J,S)$ with $|S|\le N$,  there exist $(J_i,S_i) \in \mc{J}_*$ and an isomorphism $\psi: V(J_i) \to V(J)$ between $J_i$ and $J$ such that $\psi$ maps $S_i$ bijectively on to $S$. By our assumption there exists a positive integer $\ell'$  so that $\fan(J_i,S_i,\ell') \not \in \mc{G}$ for every $(J_i,S_i) \in \mc{J}_*$.

Consider now $G \in \mc{G}$ and $X$ as obtained from   Corollary~\ref{c:tworoots} above. An \emph{$X$-bridge} in $G$ is a maximal subgraph $B$ of $G$ such that $B$ and $B \setminus X$  are connected, and $X \cap V(B)$ is independent in $B$. Thus every $X$-bridge consists of a component $C$ of $G \setminus X$ together with all the neighbors of vertices of~$C$ in $X$ and edges from $C$ to these neighbors. Conversely every component of $G \setminus X$ gives rise to a unique $X$-bridge. Let $\mc{B}$ be the collection of all $X$-bridges $B$ in $G$ such that $B \not \in \mc{C}$.

We bound $|\mc{B}|$ as follows. 
For every $B \in \mc{B}$  there exists $F \in \mc{F}$ so that $(B, X \cap V(B))$ is an $F$-snare. 
Thus there exist an $F$-trap $(J(B),S(B))$ and an embedding $\phi_B: J(B) \emb B$ 
such that 
 $V(\phi_B(J(B))) \cap X=\phi(S(B))$. 
Then, there exist $(J_i,S_i)\in \mc{J}_*$ and an isomorphism  $\psi_B: V(J_i) \to V(J(B)) $ mapping $S_i$ onto $S(B)$. We define the signature of $B$ to be a pair of $i$ and the restriction of the map $\phi \circ \psi_B$ to  $S_i$. %
Thus there are at most $R|X|! \leq R \cdot N!$ possible signatures. Moreover, note that if $X$-bridges $B_1,B_2,\ldots, B_{\ell'}$ all have the same signature $(i,\pi_i)$, then combining $\phi_{B_1},\ldots,\phi_{B_{\ell'}}$ one can define a natural embedding of $\fan(G_i,S_i,\ell')$ into $G$, contradicting the choice of $\ell'$. The pigeonhole principle implies that $|\mc{B}| \leq \ell'R N!$.

Next we use Lemma~\ref{lem:manypath} to break up every $B \in \mc{B}$. 
Consider distinct $u,v \in V(B) \cap X$ and let $B' = B \setminus (X - \{u,v\})$. By the choice of $X$ every subdivision of $K_{2,3}$ in $B'$ contains both $u$ and $v$. Moreover, $\fan(K_{2,3},S,\ell')$ is not a subgraph of $B'$. Thus by Lemma~\ref{lem:manypath} $B'$ does not contain $3\ell'$ internally disjoint paths from $u$ to $v$. 
By Menger's theorem,  there exists a set $Y(B,u,v) \subseteq V(B)-X$ with $|Y(B,u,v)| \leq 3\ell'$ so that $u$ and $v$ belong to different components of $B' \setminus Y(B,u,v)$.

Let $Y$ be the union of $X$ and all sets $Y(B,u,v)$ as above. 
Then $|Y| \leq N+3\ell'\binom{N}{2}|\mc{B}| \le N+2N^2|\mc{B}|\ell'$. 
Let $\mc{E}$ be the partition of $E(G)$ into edge sets of $Y$-bridges in $G$ and the one-element parts corresponding to edges with both ends in $Y$. Then every vertex incident with edges in two distinct parts of $\mc{E}$ belongs to $Y$ by construction. Moreover, every $Y$-bridge is either an $X$-bridge $B$ such that $B \in \mc{C}$, or a part of an $X$-bridge in $\mc{B}$ in which case it contains no path between two distinct elements of $X$, by construction of $Y$. Thus every $Y$-bridge belongs to $\mc{C}$. It follows that $\kappa_{\mc{C}}(G) \leq N+2N^2|\mc{B}|\ell' \leq N+2N^2 R(N!)(\ell')^2$, and so   $\kappa_{\mc{C}}$ is bounded on $\mc{G}$.  
\end{proof}

\section{Classifying traps}\label{sec:traps}

Theorem~\ref{t:vbrittle} reduces the proofs of Theorems~\ref{thm:forest_vertex},~\ref{thm:diamond_vertex} and~\ref{thm:outer_vertex} to the problem of classification of respective traps, which is the goal of this section.

As $\mc{A}$ consists of all $K_3$-free graphs the next lemma implies Theorem~\ref{thm:forest_vertex}.  

\begin{LEM} If $(J,S)$ is a $K_3$-trap then $J$ is isomorphic to $K_3$ and $|S| \leq 1$.
\end{LEM}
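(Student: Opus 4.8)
The plan is to unpack the definition of a $K_3$-trap $(J,S)$ and show first that $S=\emptyset$ forces $J\cong K_3$, and then rule out $|S|\ge 2$. Recall that $(J,S)$ being a $K_3$-snare means $K_3$ is a topological minor of $J$, $S$ is independent in $J$, and $J\setminus S$ is connected; and being a trap adds the minimality conditions that no proper subgraph $J'$ of $J$ gives a $K_3$-snare $(J',S\cap V(J'))$, and no suppression $J/v$ at a degree-$2$ vertex $v\in V(J)-S$ gives a $K_3$-snare $(J/v,S)$.

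First I would observe that $K_3$ being a topological minor of $J$ means $J$ contains a cycle $C$ as a subgraph. Since $(C, S\cap V(C))$ must fail to be a $K_3$-snare (as $C$ would be a proper subgraph unless $J=C$), and $S\cap V(C)$ is independent in $C$ while $K_3$ is visibly a topological minor of $C$, the only way $(C,S\cap V(C))$ fails is if $C\setminus(S\cap V(C))$ is disconnected. If $S\cap V(C)=\emptyset$ this cannot happen, so in that case we must have $J=C$, i.e. $J$ is a single cycle; then the suppression condition forces $C$ to have no degree-$2$ vertex outside $S=\emptyset$, which for a cycle means $C=K_3$. This handles $|S|=0$: $J\cong K_3$.

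Next, for $|S|\ge 1$: pick a shortest cycle $C$ in $J$. Because $(C, S\cap V(C))$ is not a $K_3$-snare and $C$ is connected and contains $K_3$ as a topological minor, $C\setminus(S\cap V(C))$ must be disconnected, which for a cycle means $|S\cap V(C)|\ge 2$, so $|S|\ge 2$; I want to derive a contradiction, hence conclude $|S|\le 1$. With two vertices $s_1,s_2\in S$ on $C$, consider the graph $J\setminus s_1$: it still contains the two $s_1$-to-$s_1$... — more carefully, $C\setminus s_1$ is a path $P$ joining the two neighbors of $s_1$, and since $s_2\in S$ lies on $C$ with $s_2\ne s_1$, $s_2$ is an internal or end vertex of $P$. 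Now $J\setminus s_1$ is a proper subgraph; is $(J\setminus s_1, S-\{s_1\})$ a $K_3$-snare? It has $S-\{s_1\}$ independent and $(J\setminus s_1)\setminus(S-\{s_1\})$ might not be connected — but here is the key point: $(J\setminus s_1)\setminus(S-\{s_1\}) = J\setminus S$, which \emph{is} connected by hypothesis. So the only way $(J\setminus s_1,S-\{s_1\})$ fails to be a $K_3$-snare is that $J\setminus s_1$ contains no cycle. Symmetrically $J\setminus s_2$ contains no cycle. So every cycle of $J$ passes through both $s_1$ and $s_2$; in particular, since $s_1s_2\notin E(J)$ (independence), every cycle through $s_1,s_2$ has length $\ge 4$, and $J\setminus\{s_1,s_2\}$ is a forest (indeed, removing either kills all cycles).

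Now I push on the structure. Take the shortest cycle $C$; it uses $s_1$ and $s_2$, splitting into two internally disjoint $s_1$–$s_2$ paths $A,B$, each of length $\ge 2$. Each of $A$ and $B$ has an internal vertex, which lies in $V(J)-S$ (since $S\cap V(C)=\{s_1,s_2\}$ by shortest-cycle minimality — here I should check $C$ cannot contain a third vertex of $S$, which follows because a third $s_3\in S\cap V(C)$ would make $C\setminus s_3$ a path, forcing $J\setminus s_3$ acyclic, so $C$ passes through $s_3$ too, but then $s_1,s_2,s_3$ are pairwise non-adjacent on a cycle, forcing length $\ge 6$; iterate to bound $|S\cap V(C)|$ — cleaner: just take $C$ shortest and argue any internal vertex of $A$ or $B$ not in $S$ witnesses a contradiction). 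The main obstacle, and the step I'd spend the most care on, is extracting the contradiction from the suppression condition: an internal vertex $v$ of $A$ of degree $2$ in $J$ with $v\notin S$ would make $(J/v, S)$ a $K_3$-snare (it still contains the cycle, $S$ still independent, $J/v\setminus S$ still connected), contradicting trap-minimality — unless no such $v$ exists, i.e. every internal vertex of $A$ (and of $B$) has degree $\ge 3$ in $J$ or lies in $S$. Combined with $J\setminus\{s_1,s_2\}$ being a forest and the subgraph-minimality (which forces $J=C\cup(\text{tree attaching everything})$, but a shortest cycle already being length $\ge 4$ and needing a degree-$3$ attachment point conflicts with $J\setminus s_1$ acyclic), I expect to corner the configuration into impossibility. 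The cleanest route may actually be: subgraph-minimality applied to $C$ itself shows $J$ equals $C$ plus possibly a tree $J'$ of the sort in Lemma~\ref{l:boundtrap}, but since $C$ alone with $|S\cap V(C)|\ge 2$ — wait, $(C, S\cap V(C))$ fails to be a snare, so $C\ne J$ and $J'$ is nonempty and must reconnect $C\setminus S$; yet $J\setminus s_1$ acyclic means $J'$ cannot create or help a cycle, forcing $J' $ to attach to $C$ in a way that, together with an internal degree-$2$ vertex on $C$, violates suppression. Running this bookkeeping to its end yields the contradiction, so $|S|\le 1$, and combined with the snare/suppression conditions exactly as in the $|S|=0$ case, $J\cong K_3$.
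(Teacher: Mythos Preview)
Your proposal never actually finishes the argument, and the missing step is exactly the idea that makes the paper's proof a three-line affair. You correctly observe that for a proper subgraph $J'=J\setminus s_1$ one has $(J\setminus s_1)\setminus(S-\{s_1\})=J\setminus S$, hence $J\setminus s_1$ must be acyclic; so every cycle of $J$ contains $s_1$ (and symmetrically $s_2$). But then you drift into suppression and forest bookkeeping without closing. The contradiction is immediate: take the two neighbours $u,v$ of $s_1$ on your cycle $C$; they lie in $V(J)-S$ since $S$ is independent, and since $J\setminus S$ is connected there is a $u$--$v$ path $P$ in $J\setminus S$. Then $P$ together with $s_1$ and the edges $s_1u,s_1v$ is a cycle in $J$ that avoids $s_2$, contradicting what you just proved. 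So no cycle of $J$ meets $S$ in two vertices.

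This rerouting move \emph{is} the paper's proof. Rather than argue by contradiction from $|S\cap V(C)|\ge 2$, the paper simply takes any cycle $C$, and if some $s\in S$ lies on it, replaces the $u$--$v$ arc of $C$ through the rest of the cycle by a $u$--$v$ path in $J\setminus S$, producing a cycle $C'$ with $V(C')\cap S=\{s\}$. Now $(C',\{s\})$ is a $K_3$-snare, so $J=C'$ by subgraph-minimality and $|V(C')|=3$ by the suppression condition. Your case split on $|S|$ is unnecessary (and your $|S|=1$ case is never actually written out): one rerouting handles everything at once.

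Two smaller issues. First, in your $|S|\ge 1$ paragraph you assert ``$(C,S\cap V(C))$ is not a $K_3$-snare'' without the caveat $C\neq J$ that you correctly included earlier; if $|S\cap V(C)|\le 1$ then $(C,S\cap V(C))$ \emph{is} a snare and the conclusion is $J=C$, which you need to say. Second, appealing to Lemma~\ref{l:boundtrap} is circular in spirit: that lemma bounds the size of traps, it does not classify them, and in any case you do not need it here.
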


\begin{proof}
As $J$ contains $K_3$ as a topological minor, there exists a cycle $C$ in $J$. If $V(C) \cap S = \emptyset$ then $(C,\emptyset)$ is a $K_3$-snare and so by minimality $J=C$ and $\abs{V(C)}=3$ as desired.
Otherwise, there exists $s \in V(C) \cap S$. Let $u,v$ be the neighbors of $s$ in $C$. As $S$ is independent we have $u,v \in V(J) - S$. As $J \setminus S$ is connected there exists a path $P$ from $u$ to $v$ in $J \setminus S$. Adding $s$ and edges $su$ and $sv$ to $P$ we obtain a cycle $C'$ in $J$ such that $V(C') \cap S = \{s\}$. As $(C',\{s\})$ is a $K_3$-snare, we once again have $J=C'$ and $|V(C')|=3$.	
\end{proof}

Similarly, Theorem~\ref{thm:diamond_vertex} is implied by the following.
  
\begin{LEM} If $(J,S)$ is a $D$-trap then $J$ is isomorphic to $D$.
\end{LEM}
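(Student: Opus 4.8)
The plan is to mimic the structure of the preceding $K_3$-trap lemma, but now working with the diamond $D$ instead of a triangle. Let $(J,S)$ be a $D$-trap. Since $D$ is a topological minor of $J$, there is an embedding $\phi\colon D\emb J$; write $H'=\phi(D)$, so $H'$ is a subdivision of $D$ inside $J$. The first step is to argue that, after replacing $H'$ by a possibly different subdivision of $D$ in $J$, we may assume that $S\cap V(H')$ consists only of internal vertices of the branch paths of $H'$, i.e.\ none of the four branch vertices of $H'$ lies in $S$. Indeed, suppose a branch vertex $\phi(w)$ lies in $S$. The vertex $w$ of $D$ has either degree $2$ or degree $3$. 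If $w$ has degree $2$, its two $H'$-neighbors $u,v$ lie outside $S$ (as $S$ is independent); since $J\setminus S$ is connected, there is a $u$--$v$ path $P$ in $J\setminus S$, and rerouting the two branch paths at $\phi(w)$ through $P$ (removing $\phi(w)$) produces a new subdivision of $D$ in $J$ avoiding this $S$-vertex at a branch vertex; this subdivision, together with $S$ restricted to it, is still a $D$-snare. If $w$ has degree $3$, the same rerouting idea works using two disjoint paths in $J\setminus S$ joining the three neighbors of $\phi(w)$ — and here I need to be slightly careful that such disjoint paths exist; if they do not, connectivity of $J\setminus S$ still gives a tree connecting the three neighbors, whose internal structure lets me relocate the degree-$3$ vertex off $S$. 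Iterating, I get a subdivision $H^\ast$ of $D$ in $J$ with all branch vertices outside $S$.

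The second step replaces the internal $S$-vertices on the branch paths. For each branch path $\Gamma$ of $H^\ast$ between branch vertices $a,b\notin S$, if $\Gamma$ contains a vertex $s\in S$, then $s$ has degree $2$ in $H^\ast$ with $H^\ast$-neighbors $x,y\notin S$; using connectivity of $J\setminus S$ I reroute the sub-path of $\Gamma$ through $s$ via an $x$--$y$ path in $J\setminus S$, reducing the number of $S$-vertices on branch paths. However, this rerouting could re-introduce $S$-vertices elsewhere, so instead the cleaner approach is: let $C'$ be obtained from $H^\ast$ by, for each maximal sub-path of a branch path whose interior meets $S$, replacing that sub-path by a path in $J\setminus S$ between its endpoints (which lie outside $S$); this is possible since $J\setminus S$ is connected — but to keep the result a subdivision of $D$ rather than a more complicated graph, I should take these replacement paths to be internally disjoint from $H^\ast$ and from each other, choosing them greedily and shortcutting any accidental intersections. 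The upshot is a subdivision $C''$ of $D$ in $J$ with $V(C'')\cap S=\emptyset$. Then $(C'',\emptyset)$ is a $D$-snare, so by minimality in the definition of trap, $J=C''$, hence $J$ is a subdivision of $D$. Finally, the trap minimality under contracting degree-$2$ vertices outside $S$ forces every such vertex to have been already absent, i.e.\ $J$ has no degree-$2$ vertex (its branch paths all have length $1$), so $J\cong D$; and $S=\emptyset$, consistent with the statement.

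The main obstacle I anticipate is the rerouting arguments: ensuring that when I push $S$-vertices off the subdivision of $D$, the result remains a \emph{subdivision of $D$} (not some graph merely containing $D$ as a topological minor) and remains a \emph{snare} (i.e.\ I do not destroy connectivity of $J\setminus S$ or the independence of $S$ — independence is automatic since $S$ does not change, and connectivity of $J\setminus S$ is a property of $J$, not of the chosen subdivision, so that part is free). The delicate point is purely combinatorial surgery: replacement paths chosen from $J\setminus S$ may run through parts of the current subdivision, and I must take minimal such paths and argue that shortcutting yields a genuine subdivision of $D$ with strictly fewer $S$-vertices, so that the process terminates. A secondary subtlety is the degree-$3$ branch vertex case in step one, where I may only get a tree (not two disjoint paths) connecting the three neighbors in $J\setminus S$; I expect to handle this by taking the branch vertex of that tree as the new degree-$3$ vertex of the relocated subdivision. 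Once these surgeries are in place, the final appeal to trap-minimality (both the ``proper subgraph'' clause and the ``contract a degree-$2$ vertex'' clause) is routine and mirrors the $K_3$ case exactly.
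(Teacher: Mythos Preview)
Your target is too strong, and this is a genuine gap. You aim to produce a subdivision $C''$ of $D$ in $J$ with $V(C'')\cap S=\emptyset$, and from that conclude $S=\emptyset$. But take $J=D$ and $S=\{v\}$ for a degree-$2$ vertex $v$: this pair is a $D$-trap (the axioms are immediate), yet the only subdivision of $D$ sitting inside $D$ is $D$ itself, which meets $S$. Your rerouting step here would try to replace the length-$2$ branch path through $v$ by a path in $J\setminus S$ between the two degree-$3$ vertices, but every such path reuses edges already in the subdivision, so no new subdivision of $D$ arises. The same obstruction occurs when $S$ consists of both degree-$2$ vertices of $D$. In particular your final sentence ``and $S=\emptyset$'' is simply false; the lemma only asserts $J\cong D$, and $D$-traps with $|S|\in\{1,2\}$ do exist.

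The paper's argument avoids this by aiming for a weaker target. One fixes a subdivision $H$ of $D$ in $J$ minimizing $|V(H)\cap S|$ and lets $S'=V(H)\cap S$. If $H\setminus S'$ is connected then $(H,S')$ is itself a $D$-snare, so trap-minimality gives $J=H$; now one only needs to suppress degree-$2$ vertices of $H$ lying \emph{outside} $S$, and a short case check on $|S'|\le 2$ shows $J\cong D$. If $H\setminus S'$ is disconnected, a single carefully chosen path $R$ in $J\setminus S$ between two of its components is used to reroute one branch path of $H$, producing a subdivision $H'$ of $D$ with $|V(H')\cap S|<|V(H)\cap S|$, contradicting the choice of $H$. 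The point you are missing is that the snare condition only requires $H\setminus S'$ to be connected, not $S'=\emptyset$; that weaker requirement is what the rerouting can always secure.
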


\begin{proof}
Let $H$ be a subgraph of $J$ isomorphic to a subdivision of $D$, chosen so that $|V(H) \cap S|$ is minimum.	Let $S'=V(H) \cap S$. Let $u,v$ be the two vertices of $H$ of degree three and let $P_1,P_2,P_3$ be the internally disjoint paths from $u$ to $v$ such that $H = P_1 \cup P_2 \cup P_3$ 

Suppose first that $H \setminus S'$ is connected. Then $(H,S')$ is a $D$-snare and so $H=J$ by minimality. 
If $|S'| \leq 1$ then $J$ is  isomorphic to $D$,  as otherwise one can choose a degree-$2$ vertex in $V(J) - S'$ to suppress so that the resulting graph is still a subdivision of $D$, contradicting minimality of $(J,S)$. 
If $|S'|=2$ then no $P_i$ contains $S'$ since otherwise $J\setminus S'$ is not connected.
So, similar to the case that $|S'|\le 1$, one can prove that $J$ is isomorphic to $D$. 

Suppose now for a contradiction that $H \setminus S'$ is not connected. Then either
\begin{itemize}
	\item $|V(P_i) \cap S'| \geq 2$ for some $1 \leq i \leq 3$, or
	\item some internal vertex of $P_i$ lies in $S'$ for every $1 \leq i \leq 3$.
\end{itemize}

We start considering the first case. Suppose without loss of generality that there exist distinct $s_1,s_2 \in V(P_1) \cap S'$. Let $Q$ denote the subpath of $P_1$ with ends $s_1$ and $s_2$. By connectivity of $J \setminus S$ there exists a path $R$ in $J \setminus S$ with one end in $x \in V(Q)-S'$, the other end $y \in V(H) - V(Q)-S'$, internally disjoint from $H$. 

If $y \in V(P_1)$ then replacing the subpath of $P_1$ with ends $x$ and $y$ by~$R$, we obtain a subdivision  $H'$ of $D$ such that $|V(H') \cap S| < |V(H) \cap S|$, contradicting the choice of $H$. If $y \not \in V(P_1)$, then we obtain a similar contradiction, this time by  deleting the internal vertices of the subpath of~$P_1$ with ends $x$ and $v$, and adding $R$. This finishes the analysis of the first case.

For the second case let $s_i \in S'$ be an internal vertex of $P_i$ for $i=1,2,3$. Let $C$ and $C'$ be the components of $H \setminus \{s_1,s_2,s_3\}$ containing $u$ and $v$, respectively. Since $J\setminus S$ is connected, there exists a path $R$ in $J \setminus S$ with one end $x \in V(C)-S$, the other end $y \in V(C')-S$, internally disjoint from $H$. As in the previous case we now obtain a contradiction by rerouting $H$ along $R$. If $x,y \in V(P_i)$ for some $1 \leq i \leq 3$, then we replace the subpath of $P_i$ with ends $x$ and $y$ by $R$. Otherwise, assuming $x \in V(P_i)$ we, as before, delete the internal vertices of the subpath of $P_i$ with ends $x$ and~$v$, and add $R$. In both cases we obtain a subdivision $H'$ of $D$ such that $|V(H') \cap S| < |V(H) \cap S|$, a contradiction.  
\end{proof}

Finally, the next lemma contains the technical part of the proof of Theorem~\ref{thm:outer_vertex}: A classification of $K_{2,3}$-traps.
For a set $X$ of vertices, an \emph{$X$-path} in $G$ is a path in $G$ with both ends in $X$  and no internal vertices in $X$.

\begin{LEM}\label{l:outer_trap} 
If $(J,S)$ is a $K_{2,3}$-trap, then either
\begin{itemize}
\item $J$ is isomorphic to $K_{2,3}$ or 
\item $J$ is isomorphic to either $K_{2,3}^{+}$ or $W^+_k$ for some $k \geq 3$ and $S$ is the set of all degree-$2$ vertices of $J$.
\end{itemize}
\end{LEM}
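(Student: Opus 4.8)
The approach is to start from a subgraph $H$ of $J$ isomorphic to a subdivision of $K_{2,3}$ with $|V(H)\cap S|$ minimized, write $H=P_1\cup P_2\cup P_3$ for the three internally disjoint paths joining the two degree-$3$ vertices $u,v$, and analyze how $S':=V(H)\cap S$ can intersect the $P_i$'s while keeping $(J,S)$ an $H$-trap. As in the $K_3$- and $D$-trap lemmas, the first move is to show that if $H\setminus S'$ is \emph{connected}, then $(H,S')$ is a $K_{2,3}$-snare, hence $H=J$ by minimality; then the degree-$2$ suppression condition in the definition of a trap forces $J$ to be $K_{2,3}$ itself, so we land in the first alternative. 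The substance of the lemma is therefore the case where $H\setminus S'$ is disconnected.

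When $H\setminus S'$ is disconnected, I would case-split on the ``shape'' of $S'$ inside $H$: how many internal vertices of each $P_i$ lie in $S'$, and whether $u$ or $v$ lies in $S'$. The key tool throughout is the rerouting argument already used twice in the excerpt: since $J\setminus S$ is connected, there is a path $R$ in $J\setminus S$ with ends on $H$ and internally disjoint from $H$, and by rerouting $H$ along $R$ (replacing a subpath, or deleting internal vertices up to a branch vertex and appending $R$) one typically produces a subdivision $H'$ of $K_{2,3}$ with $|V(H')\cap S|<|V(H)\cap S|$, contradicting minimality --- \emph{unless} the rerouting is blocked, which pins down the structure. The anticipated conclusion is that $S$ must be exactly the set of degree-$2$ vertices, the ``inner'' structure of $J$ is forced to be a cycle through all of $S$ with extra chords or an apex, and running the trap-minimality conditions (no proper subgraph is a snare, no degree-$2$ suppression is a snare) eliminates everything except $K_{2,3}^+$ and the family $W_k^+$.

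Concretely, after ruling out the connected case, I expect to argue: (a) no $P_i$ contains two vertices of $S'$ (a rerouting contradiction, exactly as in the $D$-trap proof); (b) hence each $P_i$ contains at most one vertex of $S'$, and since $H\setminus S'$ is disconnected we must have $S'$ meeting the interiors of at least two of the three paths, and in fact --- combined with minimality of $|V(H)\cap S|$ and connectivity of $J\setminus S$ --- every vertex of $S$ has degree $2$ in $J$ and lies ``between'' the two sides; (c) using Lemma~\ref{l:boundtrap}-style bookkeeping together with the snare-minimality, reconstruct $J$ from its ``core'': contracting each component of $J\setminus S$ to a point and examining the resulting structure, one finds either $u$ and $v$ are adjacent with all of $S$ forming parallel length-$2$ $\{u,v\}$-paths (giving $K_{2,3}^+$ when $|S|=3$ and more generally a blown-up theta which minimality collapses to $K_{2,3}^+$), or the ``$v$-side'' degenerates into a cycle with an apex $u$, each rim edge subdivided once by a vertex of $S$, which is precisely $W_k^+$. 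Finally I would check that for these two graphs with $S$ the degree-$2$ vertices the pair really is a trap (no proper subgraph and no degree-$2$ suppression remains a $K_{2,3}$-snare), confirming they genuinely occur.

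\textbf{Main obstacle.} The hard part is the disconnected case: there are several sub-configurations for how $S'$ sits in the three paths (and whether $u,v\in S'$), and in each one the rerouting argument has to be set up so that a path $R$ in $J\setminus S$ exists whose endpoints allow a strictly-improving reroute; the delicate point is precisely the configurations where \emph{no} improving reroute exists, since those are exactly the ones that must be shown to coincide with $K_{2,3}^+$ or $W_k^+$. Managing this exhaustively --- especially arguing that the ``blocked'' configuration forces the global cycle-plus-apex (resp.\ theta) structure rather than something larger, and then that trap-minimality trims it down to exactly the listed graphs --- is where the real work lies; the connected case and the $J\cong K_{2,3}$ conclusion are routine by comparison.
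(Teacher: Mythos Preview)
Your overall architecture --- minimize $|V(H)\cap S|$, dispose of the connected case, then reroute --- matches the paper's, but step~(a) of your plan is false, and it is exactly where the $W_k^+$ family hides. In $W_k^+$ (the wheel with every rim edge subdivided, $S$ the set of subdivision vertices), the minimal subdivision $H$ of $K_{2,3}$ has branch vertices at two adjacent rim vertices $r_1,r_2$ and paths $r_1\text{--}s_1\text{--}r_2$, $r_1\text{--}c\text{--}r_2$, and the long rim path $r_1\text{--}s_k\text{--}r_k\text{--}\cdots\text{--}s_2\text{--}r_2$; the last of these contains $k-1\ge 2$ vertices of $S$. The $D$-trap rerouting you invoke does \emph{not} give a contradiction here: every path in $J\setminus S$ from an interior rim vertex $r_i$ lands on the apex $c$, and rerouting through $c$ produces a path of length~$1$ between the new branch vertices. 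That is still a subdivision of $D$ (one of whose three paths has length~$1$), which is why the argument worked there, but it is \emph{not} a subdivision of $K_{2,3}$, so nothing improves. The paper's claim~(3) makes this precise: a reroute from $x$ on one $\Gamma_i$ to $y$ on another only fails to reduce $|V(H)\cap S|$ when $y$ is adjacent to a branch vertex, and then claim~(5) upgrades this to ``$y$ is adjacent to \emph{both} branch vertices and both sides of $x$ on its path meet $S$'' --- which is the seed of the wheel structure, not a contradiction.

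Two further points. First, the paper minimizes not only $|V(H)\cap S|$ but, subject to that, the number of degree-$3$ vertices of $H$ in $S$; this secondary choice is what drives the proof that $a,b\notin S$ (their step~(4)), and you will need something like it. Second, once (a) fails, your steps (b)--(c) no longer describe the actual endgame: the paper instead fixes a minimal family $\mathcal Q$ of $V(H)$-paths in $J\setminus S$ reconnecting the components of $H\setminus S$, shows all of them share the single interior vertex $y$ of the short path $\Gamma_2$ as an endpoint and are pairwise internally disjoint, and reads off the $W_k^+$ structure (with $K_{2,3}^+$ appearing earlier, in step~(6), as the degenerate case where the only reconnecting path is the edge $ab$). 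Your ``contract components of $J\setminus S$'' picture is not how the argument runs.
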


\begin{proof}
Assume for a contradiction that $(J,S)$ is a $K_{2,3}$-trap for which the conclusion of the lemma does not hold.
We choose a subgraph $H$ of $J$  isomorphic to a subdivision of $K_{2,3}$ 
that minimizes the number of vertices of $H$ in $S$, and subject to that the number of degree-$3$ vertices of $H$ in $S$ is minimized. 
Let $a$, $b$ be the degree-$3$ vertices in $H$, 
and let $\Gamma_1,\Gamma_2,\Gamma_3$ be internally disjoint paths in $H$ joining $a$ and $b$.
To simplify our presentation, 
we will write a \emph{subpath} in $H$ to denote a subpath of $\Gamma_1$, $\Gamma_2$, or $\Gamma_3$.
For distinct vertices $s,t$ on the same path $\Gamma_i$ for some $i=1,2,3$ where $\{s,t\}\neq \{a,b\}$, 
if $s$ is not adjacent to $t$, then let $I(s,t)$ be the set of vertices in the component of $H\setminus \{s,t\}$ not containing $a$ or $b$,
and otherwise,  we let $I(s,t)=\emptyset$.
Let $O(s,t)=V(H)- \left(\{s,t\} \cup I(s,t)\right)$.
We remark that 
if $s,t \in S$, 
then $I(s,t)-S\neq \emptyset$ because $S$ is an independent set.
\medskip

\noindent (1) $H \setminus S$ is not connected.
\medskip

If $H \setminus S$ is connected, then $(H, V(H) \cap S)$ is a $K_{2,3}$-snare. It follows from minimality of $(J,S)$ that $J=H$.
If for some $i$, $\Gamma_i$ contains at least two  degree-$2$ vertices, then it has a degree-$2$ vertex $v$ not in $S$,
which implies that $(H/v,S)$ is a $K_{2,3}$-snare, a contradiction to the minimality of $(J,S)$. 
Since $J=H$ is a subdivision of $K_{2,3}$, $\Gamma_i$ has length $2$ for $i=1,2,3$, which implies that $J$ is isomorphic to $K_{2,3}$, a contradiction to our assumption.
\bigskip

\noindent (2) If $I(x,y) \cap S\neq \emptyset$ for some distinct $x,y \notin S$ on the same path $\Gamma_i$ for some $i=1,2,3$, and $\{x,y\}\neq \{a,b\}$, 
then $J\setminus S$ has no $V(H)$-path joining $x$ and~$y$.
\medskip

Suppose there exists such a path $Q$.
We consider the subgraph $H'$ of $J$ obtained from $H$ by removing $I(x,y)$ and adding $Q$.
Clearly, $H'$ is isomorphic to a subdivision of $K_{2,3}$.
However, $|V(H')\cap S| <|V(H)\cap S|$ 
since $Q$ contains no vertex in $S$, but $I(x,y)$ contains a vertex in $S$. 
This yields a contradiction to the minimality of the number of vertices of $H$ in $S$.
This proves (2).
\bigskip

\noindent (3) If $J\setminus S$ has a $V(H)$-path $Q$ from $x$ to $y$,
and $x$ and $y$ are not on the same path $\Gamma_i$ for $i=1,2,3$, 
then $I(a,x)\cap S =\emptyset$ or $y$ is adjacent to $b$.
\medskip

Suppose $I(a,x) \cap S \neq \emptyset$ and $I(y,b)$ has at least one vertex.
Then, the subgraph $H'$ of $J$ obtained from $H$ by removing $I(a,x)$ and adding $Q$ is isomorphic to a subdivision of $K_{2,3}$
and 
$|V(H')\cap S| < |V(H)\cap S|$
since every internal vertex of $Q$ is not in $S$ but $I(a,x)$ contains a vertex in~$S$. 
This is a contradiction to the minimality of the number of vertices of $H$ in~$S$. This proves (3).
\bigskip

\noindent (4) $a,b \notin S$.
\medskip

Suppose not.
Without loss of generality, we may assume $a\in S$.
Suppose $V(H)\cap S \not\subseteq \{a,b\}$.
Then,
there is a vertex  $s\in (V(H)\cap S)- \{a,b\}$
such that $I(a,s)$ has no vertex in $S$.

Since $J\setminus S$ is connected, there is a $V(H)$-path $Q$ in $J\setminus S$
connecting $I(a,s)$ and $O(a,s)$.
Let $x$ and  $y$ be the end vertices of $Q$ in $I(a,s)$ and $O(a,s)$, respectively.
By (2), $y$ is not on the same path $\Gamma_i$ with $s$.
Let $H'$ be the subgraph of $J$ obtained from $H$ by removing $I(a,y)$ and adding $Q$.
Then $H'$ is isomorphic to a subdivision of $K_{2,3}$.
In addition, $|S\cap V(H')|\le |S\cap V(H)|$  since $Q$ contains no vertices in $S$, 
and 
the number of degree-$3$ vertices of~$H'$ in $S$ is less than that of $H$ since $x$ and $b$ are the degree-$3$ vertices of~$H'$ and $x \notin S$,
contradicting the assumption on the choice of $H$.
Hence, $S\cap V(H) \subseteq \{a,b\}$, and thus $S \cap V(H)=\{a,b\}$  by (1).

\begin{figure}
	\centering
	\begin{tikzpicture}
	\tikzstyle{v}=[circle, draw, solid, fill=black, inner sep=0pt, minimum width=3pt]
	\draw (0,0) node [v,label=$a$] (a){};
	\draw(3.5,0) node[v,label=$b$](b){};
	\draw (a) to  node[pos=0.2,label=below:$\Gamma_2$,inner sep=0pt] {}node[pos=0.5,v,label=below:$t_1$](t1){}  (b);
	\draw (a) [out=60,in=120] to node[inner sep=0pt,pos=0.2,label=$\Gamma_1$] {}
	node[pos=0.5,v,label=$s_1$](s1){} (b);
	\draw (a) [out=-60,in=-120] to node[inner sep=0pt, pos=0.2,label=below:$\Gamma_3$] {}
	node[pos=0.7,v,label=below:$s_2$](s2){} (b);
	\draw (s1)-- node[pos=0.5,v,label=left:$t_2$](t2){} (t1);
	\draw (t2) [out=0,in=90] to (s2);
	\end{tikzpicture}
	\quad
	\begin{tikzpicture}
	\tikzstyle{v}=[circle, draw, solid, fill=black, inner sep=0pt, minimum width=3pt]
	\draw (0,0) node [v,label=$a$] (a){};
	\draw(3.5,0) node[v,label=$b$](b){};
	\draw (a) to  node[pos=0.2,label=below:$\Gamma_2$,inner sep=0pt] {}
	node[pos=0.7,v,label=$t_2$](t2){} 
	node[pos=0.5,v,label=below:$t_1$](t1){}  (b);
	\draw (a) [out=60,in=120] to node[inner sep=0pt,pos=0.2,label=$\Gamma_1$] {}
	node[pos=0.5,v,label=$s_1$](s1){} (b);
	\draw (a) [out=-60,in=-120] to node[inner sep=0pt, pos=0.2,label=below:$\Gamma_3$] {}
	node[pos=0.7,v,label=below:$s_2$](s2){} (b);
	\draw (s1)-- (t1);
	\draw (t2) to (s2);
	\end{tikzpicture}
	
	\caption{Two possible arrangements of two $V(H)$-paths $Q_1$ and $Q_2$ in the proof of Lemma~\ref{l:outer_trap} .}
	\label{fig:q1q2}
\end{figure}
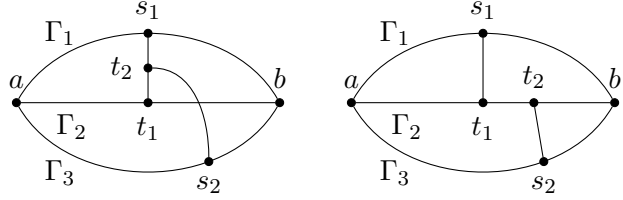

Let $Q_1$ be a $V(H)$-path in $J\setminus S$ joining two components of $H\setminus S$, and let $H_1$ be the union of $H$ and $Q_1$. 
We may assume that $Q_1$ has one end in $s_1 \in V(\Gamma_1)$ and the other end in $t_1 \in V(\Gamma_2)$.

Let $Q_2$ be a $V(H_1)$-path in $J\setminus S$ with one end in $s_2 \in V(\Gamma_3)$ joining the two components of $H_1 \setminus S$. Let $t_2$ be the second end of $Q_2$.

See Figure~\ref{fig:q1q2} for an illustration.
If $t_2$ is an internal vertex of $Q_1$, then $(H_1 \cup Q_2) \setminus \{b\}$ has three internally disjoint paths of length at least two from $a$ to $t_2$.
They form a subdivision of $K_{2,3}$ containing only one vertex in $S$, a contradiction.
Hence, $t_2$ is contained in $V(H)$, 
and we may assume that $t_2\in V(\Gamma_2)$, and 
we may further assume that $t_2$ is not closer to $a$ than $t_1$ in $\Gamma_2$.
Note that $t_1$ and $t_2$ are not necessarily distinct.
Then, there are three internally disjoint paths of length at least $2$ in the union of $H$, $Q_1$ and $Q_2$ from $s_1$ to $s_2$ not using any vertex in $I(a,t_1)$ or $I(t_2,b)$.
They form a subdivision $K$ of $K_{2,3}$ containing exactly two vertices in $S$  
not both of which have degree three in $K$, contradicting the choice of $H$. This proves (4).
\bigskip

\noindent(5) If a $V(H)$-path in $J\setminus S$ from $x$ to $y$ joins distinct components of $H\setminus S$ 
and $(I(a,x)\cup I(b,x))\cap S \neq \emptyset$, then $y$ is adjacent to $a$ and $b$, and both $I(a,x)$ and $I(b,x)$ contain vertices in $S$.
\medskip

Without loss of generality, suppose $I(a,x)\cap S \neq \emptyset$. 
By (3), we know that $y$ is adjacent to $b$. 
Since $x$ and $y$ are in distinct components of $H\setminus S$,
$S \cap I(x,b)\neq \emptyset$. 
Then, by (3) again, $y$ is adjacent to $a$. This proves (5).
\bigskip

\noindent(6) There exists a $V(H)$-path $R$  in $J \setminus S$  with at least one end not in $\{a,b\}$ connecting distinct components of $H\setminus S$.
\medskip

Suppose every $V(H)$-path $Q$ in $J \setminus S$   connecting distinct components of $H\setminus S$ joins $a$ and $b$.
Then, 
each of $\Gamma_1$, $\Gamma_2$, and $\Gamma_3$ contains  exactly one vertex in $S$.
If $Q$ has length at least $2$, 
then
using $Q$ instead of the path $\Gamma_i$ containing a vertex of $S$, 
we obtain a subgraph $H'$ isomorphic to a subdivision of $K_{2,3}$ with $|V(H')\cap S| < |V(H)\cap S|$, contradicting the choice of $H$.
Hence, the length-one path $ab$ is a unique $V(H)$-path joining two distinct components of $H\setminus S$.
Let $H''$ be the subgraph of $J$ obtained from $H$ by adding the edge $ab$.
Then $(H'',V(H'') \cap S)$ is a $K_{2,3}$-snare and so $J=H''$.
If $J\setminus S$ contains a degree-$2$ vertex $x$, then $(J/x,S)$ is a $K_{2,3}$-snare, a contradiction, so, $\Gamma_i$ has length $2$ for $i=1,2,3$.
Hence, $J$ is isomorphic to $K^{+}_{2,3}$, in contradiction to our assumption.
This proves (6).
\bigskip

We may assume that path $R$ satisfying (6) joins
$x\in V(\Gamma_1)- S$ and $y \in V(\Gamma_2)- S$ where $\{x,y \} \neq \{a,b\}$.
By (2), it follows that $x,y \notin \{a,b\}$.

Since $x$ and $y$ are contained in distinct components of $H\setminus S$, 
$\Gamma_1$ or $\Gamma_2$ contains a vertex in $S$.
We may assume that $\Gamma_1$ contains at least one vertex in $S$.
Then, by (5),
we know  that $|V(\Gamma_1) \cap S| \ge 2$, %
$\Gamma_2$ has length two, and $y$ is the internal vertex of $\Gamma_2$. 
This also means $a$ and $b$ belong to the same component of $H\setminus S$. 

Let $\mathcal{Q}$ be a minimal set of $V(H)$-paths in $J\setminus S$ 
such that $R\in \mathcal{Q}$ and the union of $H\setminus S$ and all paths in $\mathcal{Q}$ is connected. 

Suppose $\mathcal{Q}$ contains a $V(H)$-path $Q$  from $w$ to $z$ where $w\in V(\Gamma_1)$ and $z\in V(\Gamma_3)$. 
By (5), since $\Gamma_1$ contains a vertex in $S$, $\Gamma_3$ must have length two and have no vertices in $S$.
Since $a,b,y,z$ are contained in the same component of $H\setminus S$, 
$x$ and $w$ belong to distinct components of $H\setminus S$
by the minimality of $\mathcal{Q}$.
We may assume that $x\in I(a,w)$. %
Since $R, Q \in \mathcal{Q}$, by the definition of $\mathcal{Q}$, there exist $s_1,s_2,s_3 \in V(\Gamma_1)\cap S$ such that $s_1 \in I(a,x)$, $s_2\in I(x,w)$ and $s_3 \in I(w,b)$.
Then, $R$ and $Q$ are internally disjoint by (2), and 
so there are three internally disjoint paths in the union of $H$, $R$, and $Q$ of length at least $2$ joining $x$ and $z$ not using any vertex in $I(w,b)$. 
They form a subdivision of $K_{2,3}$ which contradicts the choice of $H$.
Therefore, there is no $V(H)$-path in $\mathcal{Q}$ joining $\Gamma_1$ and $\Gamma_3$, 
which implies that every $V(H)$-path in $\mathcal{Q}$ joins $V(\Gamma_1)- \{a,b\}$ and $V(\Gamma_2)- \{a,b\}$, or $V(\Gamma_3)- \{a,b\}$ and $V(\Gamma_2)-\{a,b\}$ by (2).
Since $\{y\}=V(\Gamma_2)- \{a,b\}$, we conclude that every $V(H)$-path in $\mathcal{Q}$ contains $y$ as an end vertex.
\bigskip

\noindent(7) All $V(H)$-paths in $\mathcal{Q}$ are internally disjoint.
\medskip

Suppose $Q_1,Q_2 \in\mathcal{Q}$ have a common internal vertex.
Let $x_i$ be the end vertex of $Q_i$ other than $y$ for $i=1,2$, and let $w$ be the intersection of $Q_1$ and $Q_2$ closest to $x_2$ in $Q_2$.
If $x_1$ and $x_2$ are in the same path $\Gamma_i$ for some $i=1,3$, 
then there must be a vertex in $I(x_1,x_2)\cap S$ by the minimality of $\mathcal{Q}$, and it leads to a contradiction by (2).
Hence,  we may assume that $x_1 \in V(\Gamma_1)$ and $x_2 \in V(\Gamma_3)$.
Then, there are three internally disjoint paths from $a$ to $w$ in the union of $H$, $Q_1$ and $Q_2$ of length at least two, 
not using any vertex in $I(x_1,b) \cup I(x_2,b)$, which form  a subdivision $H'$ of $K_{2,3}$.
Since $I(x_1,b)\cap S \neq \emptyset$, we have $|V(H')\cap S| < |V(H)\cap S|$, which yields a contradiction to our assumption.
This proves (7). 
\bigskip

If $H$ contains exactly two vertices in $S$, 
then $H\setminus S$ has exactly two components, 
and so $\mathcal{Q}=\{R\}$, and $|I(a,x)\cap S|=|I(x,b)\cap S|=1$.
Then, $(J\setminus ay, S)$ is an $K_{2,3}$-snare, a contradiction to the definition of a $K_{2,3}$-trap. 
Therefore, $H$ contains at least three vertices in $S$. 
Let $C$ be the cycle consisting of $\Gamma_1$ and $\Gamma_3$, 
and let $s_1,s_2,\ldots, s_k$ be the vertices in $S\cap V(H)$ in the cyclic order. In particular, $k \geq 3$.
We may assume, by rotating if necessary, that $I(a,s_1)$ contains no vertices of $S$ and $s_1 \in I(a,s_2)$.

Since $S$ is independent, $C\setminus S$ consists of exactly $k$ paths.
We define %
paths $Q_1,Q_2,\ldots,Q_k$ in $G$ from $y$ to each path of $C\setminus S$ as follows:
\begin{itemize}

	\item For $i=1,2,\ldots,k-1$, 
	if $s_{i}$ and $s_{i+1}$ are in $\Gamma_j$ for some $j=1,3$, then let $Q_i$ be the $V(H)$-path in $\mathcal{Q}$ connecting $I(s_i,s_{i+1})$ and $y$, 
	and if $s_i \in V(\Gamma_1)$ and $s_{i+1} \in V(\Gamma_3)$, then let $Q_i$ be the length-one path $yb$.
	\item Let $Q_k$ be the length-one path $ay$.  
\end{itemize}

By (7), $Q_1,Q_2,\ldots,Q_k$ are internally disjoint, and by the definition of $\mathcal{Q}$, they have no vertices in $S$.
Let $H'=C \cup Q_1 \cup \ldots \cup Q_k$. Then $(H',V(C) \cap S)$ is a $K_{2,3}$-snare, and so $J = H'$. By minimality of $J$, each $Q_i$ has length one, and the end of $Q_i$ is the unique vertex on $C$ between $s_i$ and $s_{i+1}$ in the cyclic order where $s_{k+1}:=s_1$. It follows that $J$ is isomorphic to $W_k^{+}$ and $S$ is the set of degree-$2$ vertices of $J$. This contradiction finishes the proof.
\end{proof}

Now we prove  Theorem~\ref{thm:outer_vertex}.
It is well known that a graph is outerplanar if and only if it has no topological minor
isomorphic to $K_4$ or $K_{2,3}$ \cite{CH67}.

\outervertex*
\begin{proof}%
  Let $\mc{F}=\{K_4,K_{2,3}\}$. By Theorem~\ref{t:vbrittle}, $\kappa_{\mc{O}}$ is bounded on an ideal $\mc{G}$ if and only if
$\Fan(J,S) \not \subseteq \mc{G}$ whenever $(J,S)$ is a $K_4$-trap or a $K_{2,3}$-trap.
Thus it suffices to show that for every such $(J,S)$ the ideal $\Fan(J,S)$ contains one of the ideals listed in the statement of  Theorem~\ref{thm:outer_vertex}. If $(J,S)$ is a $K_{2,3}$-trap then this follows from Lemma~\ref{l:outer_trap}. 
	
If $(J,S)$ is a $K_{4}$-trap then either $J=K_4$, in which case $|S| \leq 1$ and $\Fan(J,S)$ is one of the ideals in the statement of  Theorem~\ref{thm:outer_vertex},   or $J$ contains a topological minor isomorphic to $K_{2,3}$. In the last case, there exists a $K_{2,3}$-trap $(J',S')$ such that $\Fan(J',S') \subseteq \Fan(J,S)$ and the desired conclusion holds as we already established it for $K_{2,3}$-traps.
\end{proof}

\section{Graphs with large $\mc{O}$-edge-brittleness and small edit distance from $\mc{O}$}\label{sec:example}

In this section we prove Proposition~\ref{prop:example}.

A \emph{hemmed graph} is a pair $(G,P)$, where $P$ is a path in $G$. Given a hemmed graph $(G,P)$, with $V(P)=\{v_1,v_2,\ldots,v_k\}$, indexed in the order of appearance on $P$, 
$\sigma(G,P)$ is defined to be a hemmed graph $(G',P')$ 
where $V(G')=V(G)\cup \{u_1,\ldots,u_{k-1}\}$ for new vertices $u_1,\ldots,u_{k-1}$ 
and 
$P'$ is a path with vertex set $\{v_1,u_1,v_2,u_2,\ldots,u_{k-1},v_k\}$ in order.

\begin{LEM}\label{l:etaplus} Let $G$ be a graph, $\C$ be a monotone graph class, and $P$ be a path in $G$ such that $G[V(P)] \not \in \C$. Let $(G',P')=\sigma(G,P)$. Then $\eta_{\C}(G') \geq \eta_{\C}(G)+1$.
\end{LEM}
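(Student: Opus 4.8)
The plan is to take a vertex-partition of $G'$ achieving $\eta_{\C}(G')$ and ``delete'' the new vertices $u_1,\dots,u_{k-1}$ to get a vertex-partition of $G$ that has at least one fewer crossing edge. Write $V(P)=\{v_1,\dots,v_k\}$ in the order of appearance on $P$; note $k\ge 2$, since otherwise $G[V(P)]$ is the one-vertex graph, which lies in $\C$, contrary to hypothesis. The first step is to record the structure of $G'=\sigma(G,P)$: it is obtained from $G$ by adding the new path $P'$ on $v_1u_1v_2\cdots u_{k-1}v_k$, so $E(G')=E(G)\cup E(P')$; in particular $G$ is a subgraph of $G'$, every edge of $P'$ is incident with some $u_i$ and hence lies outside $E(G)$, and $G'[V(P)]=G[V(P)]$ because no edge of $P'$ has both ends in $V(P)$.

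Now let $\mc{V}'=(V'_1,\dots,V'_n)$ be a partition of $V(G')$ with $G'[V'_i]\in\C$ for all $i$ and with exactly $\eta_{\C}(G')$ edges of $G'$ having ends in distinct parts. I would first argue that $V(P')$ is not contained in a single part: if $V(P')\subseteq V'_j$, then $V(P)\subseteq V'_j$, so $G[V(P)]=G'[V(P)]$ is a subgraph of $G'[V'_j]\in\C$, and since $\C$ is monotone this forces $G[V(P)]\in\C$, a contradiction. As $P'$ is connected, it follows that some edge $f$ of $P'$ has its two ends in distinct parts of $\mc{V}'$; this $f$ is a crossing edge of $\mc{V}'$ that does \emph{not} belong to $E(G)$.

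Finally I would project the partition: set $\mc{V}=(V_1,\dots,V_n)$ with $V_j=V'_j\cap V(G)$ (discarding empty parts). Since $G$ is a subgraph of $G'$, each $G[V_j]$ is a subgraph of $G'[V'_j]$, hence $G[V_j]\in\C$ by monotonicity, so $\mc{V}$ is a feasible partition for $G$. An edge of $E(G)$ has ends in distinct parts of $\mc{V}$ if and only if it has ends in distinct parts of $\mc{V}'$; thus the crossing edges of $\mc{V}$ are exactly the crossing edges of $\mc{V}'$ lying in $E(G)$, and since the edge $f$ found above is a crossing edge of $\mc{V}'$ outside $E(G)$, there are at most $\eta_{\C}(G')-1$ of them. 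Hence $\eta_{\C}(G)\le\eta_{\C}(G')-1$, which is the claim. I do not expect a serious obstacle here; the only subtlety is that $\C$ is merely monotone rather than an ideal, which is precisely why it matters that $G$ itself — not just a subdivision of $G$ — is genuinely a subgraph of $G'$, so that restricting a good partition of $G'$ produces a good partition of $G$.
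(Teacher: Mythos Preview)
Your proof is correct and follows essentially the same approach as the paper's own proof: take an optimal partition of $V(G')$, observe that $V(P)$ cannot lie in a single part (else $G[V(P)]=G'[V(P)]\in\C$), deduce that some edge of $P'$ is a crossing edge not in $E(G)$, and restrict the partition to $V(G)$. You are somewhat more explicit than the paper in verifying that $G[V_j]\in\C$ for the restricted parts and in noting that $E(P')\cap E(G)=\emptyset$, but the argument is the same.
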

\begin{proof}
		Let $\mc{V}'$ be a partition of $V(G')$ such that $G[V]\in \C$ for all $V \in \mc{V}'$ and $\eta_{\C}(G')$ edges of $G'$ have ends in distinct parts of $\mc{V}'$. By the choice of $P$, the set $V(P)$ intersects at least two parts of $\mc{V}'$, and it follows that at least one edge of $P'$ has  ends in distinct parts of $\mc{V}'$. Let $\mc{V}$ be the restriction of $\mc{V}'$ to $V(G)$. By the observation above, at most $\eta_{\C}(G')-1$ edges of $G$ have ends in distinct parts of $\mc{V}'$, implying $\eta_{\C}(G) \leq \eta_{\C}(G')-1$, as desired.
\end{proof}

We say that a hemmed graph $(G,P)$ is \emph{outerplanar}, if $G$ admits an outerplanar drawing such that $P$ is a part of the boundary of the outer face. The following useful observation follows immediately from the definitions.

\begin{OBS}\label{o:hemmed}  Let $(G,P)$ be an outerplanar hemmed graph. Then $\sigma(G,P)$ is outerplanar.
\end{OBS}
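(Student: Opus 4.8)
The plan is to prove Observation~\ref{o:hemmed} directly from the definition of an outerplanar hemmed graph together with the explicit description of the operation $\sigma$. Since $(G,P)$ is an outerplanar hemmed graph, fix an outerplanar drawing of $G$ in which $P$ appears as a contiguous arc on the boundary of the outer face. Write $V(P) = \{v_1, v_2, \ldots, v_k\}$ in the order they appear along $P$. The key point is that consecutive vertices $v_i, v_{i+1}$ of $P$ are joined by an edge $v_iv_{i+1}$ which is drawn on the boundary of the outer face, so there is a free region of the outer face immediately alongside this arc.

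First I would construct the drawing of $G' = \sigma(G,P)$. Recall that $G'$ is obtained from $G$ by adding, for each $i \in \{1,\ldots,k-1\}$, a new vertex $u_i$ adjacent to both $v_i$ and $v_{i+1}$, and $P'$ is the path $v_1 u_1 v_2 u_2 \cdots u_{k-1} v_k$. (Implicitly the edges $v_i v_{i+1}$ of the original path are subdivided; in any case the new edges $v_i u_i$ and $u_i v_{i+1}$ are present.) I would place each $u_i$ in the outer face, in the small region just outside the arc of the boundary between $v_i$ and $v_{i+1}$, and draw the edges $v_i u_i$ and $u_i v_{i+1}$ as curves through that region. Because the $v_i$-to-$v_{i+1}$ boundary arcs are pairwise internally disjoint portions of the outer face boundary, these insertions can be done simultaneously without creating crossings, and the resulting drawing of $G'$ is planar.

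Next I would verify that this drawing of $G'$ is in fact outerplanar with $P'$ on the outer face boundary. After inserting the vertices $u_i$ just outside the boundary, the new outer boundary walk traverses $v_1, u_1, v_2, u_2, \ldots, u_{k-1}, v_k$ in place of the old arc $v_1, v_2, \ldots, v_k$, and is otherwise unchanged. Thus every vertex of $G$ still lies on the outer face (the non-$P$ part of the boundary is untouched, and the $v_i$ remain on it), and every new vertex $u_i$ lies on the outer face by construction. Hence $G'$ is outerplanar, and moreover $P'$ is exactly the part of the outer boundary that replaced $P$, so $(G',P') = \sigma(G,P)$ is an outerplanar hemmed graph.

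The only mild subtlety — and the step I would be most careful about — is the bookkeeping of exactly which edges $\sigma$ adds or subdivides (since the definition of $\sigma$ in the paper specifies $V(G')$ and $P'$ but the edge set must be read off consistently), and the observation that the $k-1$ insertion regions along the boundary arc of $P$ are genuinely disjoint, so that the $u_i$ and their incident edges can all be routed in the outer face at once. Both are immediate from the planar drawing once the arc structure of $P$ on the outer boundary is made explicit, so there is no real obstacle here; the statement follows directly from the definitions as claimed.
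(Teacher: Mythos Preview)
Your argument is correct and is precisely the immediate-from-definitions verification the paper has in mind (the paper gives no proof beyond that remark): place each new $u_i$ in the outer face just outside the boundary arc $v_iv_{i+1}$, observe that the resulting drawing is crossing-free and that the outer boundary now reads $v_1u_1v_2u_2\cdots u_{k-1}v_k$ along the old $P$-arc, so every vertex (old and new) stays on the outer face and $P'$ lies on it.

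One small clarification on the bookkeeping you flagged: the intended reading of $\sigma$ is that $G'$ keeps all of $G$ (in particular the edges $v_iv_{i+1}$) and adds the new vertices $u_i$ with edges $v_iu_i$ and $u_iv_{i+1}$; the path edges are \emph{not} subdivided. This is what the proof of Lemma~\ref{l:etaplus} uses (it refers to ``edges of $G$'' inside $G'$). Your construction handles this case without change --- the old edge $v_iv_{i+1}$ simply becomes an interior edge bounding the new triangular face $v_iu_iv_{i+1}$, while $v_i$ and $v_{i+1}$ remain on the outer boundary via the detour through $u_i$.
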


We are now ready to prove Proposition~\ref{prop:example}.
\example*
\begin{proof}%
	Let $G_1=K_4$ and let $P_1$ be any path in $G_1$ with $V(P_1)=V(G_1)$. Define the hemmed graph  $(G_{\ell},P_\ell)$ for $\ell \ge 2$ recursively, as $(G_\ell,P_\ell)=\sigma(G_{\ell-1},P_{\ell-1})$. 
	
	We will show that $e_{\mc{O}}(G_\ell)=1$ and $\eta_{\mc{O}}(G_\ell) \geq \ell+1$, implying the proposition.
	
	Let $f \in E(G_1) - E(P_1)$ be the edge of $G_1$ joining one of the ends of $P_1$ to an internal vertex of $P_1$. Then $(G_1 \setminus f, P_1)$ is an outerplanar hemmed graph.
	Inductively, it follows from  Observation~\ref{o:hemmed} that $(G_\ell \setminus f, P_\ell)$ is outerplanar. In particular, $G_\ell \setminus f$ is outerplanar, implying $e_{\mc{O}}(G_\ell) = 1$.
	
Lemma~\ref{l:etaplus} implies that  $\eta_{\mc{O}}(G_\ell) \geq \eta_{\mc{O}}(G_{\ell-1})+1$ for every $\ell \geq 2$. Therefore, since $\eta_{\mc{O}}(G_1)\ge 2$, we have $\eta_{\mc{O}}(G_\ell) \geq \ell+1$, as claimed.
\end{proof}

\section{Concluding remarks.}\label{sec:remarks}

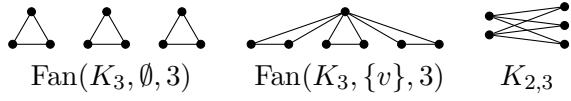
\begin{figure}[t]
  \centering
  \tikzstyle{v}=[circle, draw, solid, fill=black, inner sep=0pt, minimum width=3pt]
  \begin{tabular}{ccc}
 \begin{tikzpicture}
   \foreach\x in {0,1,2}{
     \begin{scope}[xshift=\x cm]
       \draw(0,0) node[v]{}--(.5,0) node[v]{}-- (60:.5) node[v]{}--(0,0);
     \end{scope}
     }
   \end{tikzpicture}
   \qquad&
 \begin{tikzpicture}
     \node[v] at (60:.5) (t) {};
   \foreach\x in {-1,0,1}{
     \begin{scope}[xshift=\x cm]
       \draw(0,0) node[v]{}--(.5,0) node[v]{}-- (t)--(0,0);
     \end{scope}
     }
   \end{tikzpicture}
   \qquad&
   \begin{tikzpicture}
     \node[v] at (0,.375) (x){};
     \node[v] at (0,.125) (y){};
     \foreach \y in {0,0.25,.5}{
       \draw (x)--(1,\y) node[v]{} -- (y);
       }
     \end{tikzpicture}
    \\
    $\fan(K_{3},\emptyset,3)$
         & $\fan(K_{3},\{v\},3)$
         & $K_{2,3}$
    \end{tabular}
  \caption{Obstructions for partitioning into forests.}
  \label{fig:eta_a}
\end{figure}

\begin{figure}[t]
  \centering
  \tikzstyle{v}=[circle, draw, solid, fill=black, inner sep=0pt, minimum width=3pt]
  \begin{tabular}{cccc}
 \begin{tikzpicture}
   \foreach\x in {0,1,2}{
     \begin{scope}[xshift=\x cm]
       \draw(0,0) node[v]{}--(.5,0) node[v]{}-- (60:.5) node[v]{}--(0,0)--
       (-60:.5)node[v]{}--(.5,0);
     \end{scope}
     }
   \end{tikzpicture}
      &   \begin{tikzpicture}
      \node [v] at (60:.5) (t){};
      \foreach\x in {-1,0,1}{
      	\begin{scope}[xshift=\x cm]
      	\draw(0,0) node[v]{}--(.5,0) node[v]{}-- (t)--(0,0)--
      	(-60:.5)node[v]{}--(.5,0);
      	\end{scope}
      }
      \end{tikzpicture}&
  \begin{tikzpicture}
   \node[v] at (0,0) (t) {};
   \foreach \x in {20,90,160}{
     \draw(t)-- (\x:1) node[v]  (b\x){};
     \draw(t)-- (\x+20:1) node[v] (a\x){};
     \draw(t)-- (\x-20:1) node[v](c\x){};
     \draw (a\x)--(b\x)--(c\x);
     }
   \end{tikzpicture}
&
   \begin{tikzpicture}[yscale=1.5]
     \node[v] at (0,.375) (x){};
     \node[v] at (0,.125) (y){};
     \foreach \y in {0,0.25,.5}{
       \draw (x)--(1,\y) node[v]{} -- (y);
     }
     \end{tikzpicture}
   \\
   $\fan(D,\emptyset,3)$
   &$\fan(D,\{u\},3)$ 
   & $\fan(D,\{v\},3)$
   & $K_{2,3}$
  
     \end{tabular}
  \caption{Obstructions for partitioning into diamond-free graphs.}
  \label{fig:eta_d}
\end{figure}
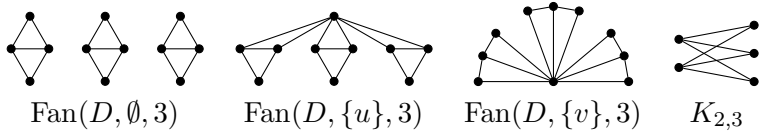

\begin{figure}[t]
  \centering
  
  \tikzstyle{v}=[circle, draw, solid, fill=black, inner sep=0pt, minimum width=3pt]
  \begin{tabular}{cccc}
      \begin{tikzpicture}
        \foreach \y in {0,...,4}{
          \foreach \i in {0,...,3} {
            \draw (\i * 1/3,0.4*\y  - .2) node[v](v\i) {};
          }
          \draw (v0)--(v3);
          \draw (v0) [out=-20,in=200] to (v2);
          \draw (v1) [out=-20,in=200] to (v3);
          \draw (v0) [out=-20,in=200] to (v3);
        }
      \end{tikzpicture}
    &
      \begin{tikzpicture} %
        \draw (0,1.1) node[v] (v) {};
        \foreach \i in {0,1,2,3}{
          \foreach \j in {0,2}{
            \draw (1,.6*\i+.2*\j) node[v](v\i-\j) {};
          }
          \draw (.8,.6*\i+.2) node[v](v\i-1) {};
          \draw (v\i-0)--(v\i-1)--(v\i-2)--(v\i-0);
          \foreach \j in {0,1,2}{
            \draw (v) [out=\i*60+\j*15-105,in=180]to (v\i-\j);
          }
        }
      \end{tikzpicture}
    &
      \begin{tikzpicture}
        \foreach \y in {0,...,4}{
          \foreach \i in {0,...,2} {
            \draw (\i * 1/3,0.4*\y - .2) node[v](v\i) {};
          }
          \foreach \i in {0,1} {
            \draw (.3*\i+.2,0.4*\y ) node[v](w\i) {};
          }
          \foreach \i in {0,1}{
            \foreach \j in {0,...,2}{
              \draw (w\i)--(v\j);
            }
          }
        }
      \end{tikzpicture}
    &
      \begin{tikzpicture} 
        \draw (0,1) node[v] (v){};
        \foreach \j in {0,...,3}{
          \draw (.85,0.6*\j) node[v] (w\j){};
          \draw (.85,0.6*\j+0.25)node[v](u\j){};
          \draw (.7,0.6*\j+0.125)node[v](x\j){};
          \draw (1,0.6*\j+0.125)node[v](y\j){};
          \draw (w\j)--(x\j)--(u\j)--(y\j)--(w\j);
        }
        \foreach \j in {0,...,3}{
          \draw (v) [out=\j*40-70,in=180] to (w\j);
          \draw (v)   [out=\j*40-50,in=180] to (u\j);
        }
      \end{tikzpicture}
        \\
        $\fan(K_4,\emptyset,5)$
        & $\fan(K_{4},\{v\},4)$
        & $\fan(K_{2,3},\emptyset,5)$
	&$\fan(K_{2,3},\{u\},4)$
  \end{tabular}
  \\\medskip
  \begin{tabular}{cccc}
      \begin{tikzpicture} 
        \draw (0,1) node[v] (v){};
        \foreach \j in {0,...,3}{
          \draw (.8,0.6*\j) node[v] (w\j){};
          \draw (.8,0.6*\j+0.3)node[v](u\j){};
          \draw (.8,0.6*\j+0.15)node[v](x\j){};
          \draw (1,0.6*\j+0.15)node[v](y\j){};
          \draw (w\j)--(y\j);
          \draw (u\j)--(y\j);
          \draw (x\j)--(y\j);
        }
        \foreach \j in {0,...,3}{
          \draw (v) [out=\j*40-70,in=180] to (w\j);
          \draw (v) [out=\j*40-60,in=180] to (x\j);
          \draw (v)   [out=\j*40-50,in=180] to (u\j);
        }
      \end{tikzpicture}
    &
      \begin{tikzpicture} 
        \draw (0,.9) node[v] (v){};
        \draw (0,1.2) node[v] (w){};
        \foreach \j in {0,...,3}{
          \draw (1,0.6*\j) node[v] (w\j){};
          \draw (1,0.6*\j+0.3)node[v](u\j){};
          \draw (1,0.6*\j+0.15)node[v](x\j){};
          \draw (w\j)--(x\j)--(u\j);
        }
        \foreach \j in {0,...,3}{
          \draw (v)-- (w\j)--(w);
          \draw (v)-- (u\j)--(w);
        }
      \end{tikzpicture}
&
      \begin{tikzpicture} %
        \foreach \i in {0,1,2}{
          \draw (0,.5*\i+.5) node[v] (v\i){};
        }
        \foreach \j in {0,...,3}{
          \draw (1,0.6*\j) node[v] (w\j){};
          \draw (1,0.6*\j+0.25)node[v](u\j){};
          \draw (w\j)--(u\j);
        }
        \foreach \i in {0,1,2}{
          \foreach \j in {0,...,3}{
            \draw (u\j)--(v\i)--(w\j);
          }
        }
      \end{tikzpicture}
    &
      \begin{tikzpicture}[scale=.7]
        \foreach \j in {0,1,2}{
          \draw (0.1-0.2*\j,0.5-0.3*\j) node[v] (x\j){};
        }
        \foreach \i in {0,3,4}{
          \draw (\i*72:1.4) node[v] (v\i){};
          \foreach \j in {0,1,2}{
            \draw  (\i*72+46-\j*10:.15*\j+1) node[v] (v\i-\j){};
            \draw (x\j)--(v\i-\j);
            \draw plot [smooth] coordinates { (v\i) (v\i-\j) (\i*72+72:1.4)};
          }
        }
        \foreach \i in {1,2}{
          \draw (\i*72:1.4) node[v] (v\i){};
          \foreach \j in {0,1,2}{
            \draw  (\i*72+26+\j*10:.15*\j+1) node[v] (v\i-\j){};
            \draw (x\j)--(v\i-\j);
            \draw plot [smooth] coordinates { (v\i) (v\i-\j) (\i*72+72:1.4)};
          }
        }
      \end{tikzpicture}
    \\

	 $\fan(K_{2,3},\{w\},4)$
        & $\fan(K_{2,3},\{u,u'\},4)$
    & $\fan(K^{+}_{2,3},S',4)$        
& $\fan(W^{+}_{5},S_5^+,3)$
  \end{tabular}
  \caption{Obstructions for partitioning into outerplanar graphs.}
  \label{fig:kappa_o}
\end{figure}
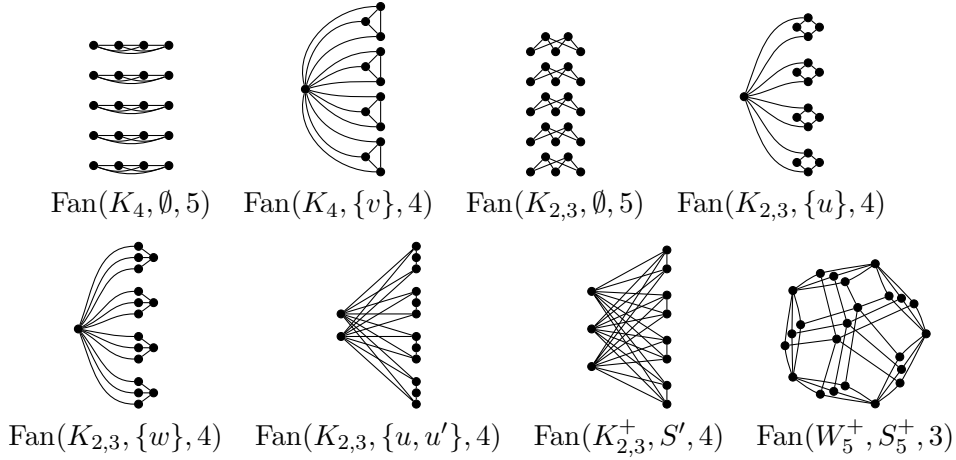

One can interpret our results as Ramsey-type results regarding the edit distance,  the $\bar{\C}$-capacity, the edge-brittleness, and the vertex-brittleness
as described in the following corollaries immediately implied by Theorems \ref{thm:forest_edge}, \ref{thm:diamond_edge}, \ref{thm:forest_vertex}, and \ref{thm:outer_vertex}.

\begin{COR}\label{cor:forest_edge}
Let $n$ be a positive integer.
If a graph has sufficiently large $\nu_{\mc A}$, $e_{\mc A}$, or $\eta_{\mc A}$, then it contains a topological minor isomorphic to $\fan(K_3,\emptyset, n)$, $\fan(K_3, \{v\}, n)$, or $K_{2,n}$
for a vertex $v\in V(K_3)$. See Figure~\ref{fig:eta_a}.
\end{COR}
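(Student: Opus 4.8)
The plan is to deduce the corollary directly from Theorem~\ref{thm:forest_edge} by applying it to a suitably chosen ideal. Fix a positive integer $n$ and let $\mc{G}_n$ denote the class of all graphs that have no topological minor isomorphic to any of $\fan(K_3,\emptyset,n)$, $\fan(K_3,\{v\},n)$, or $K_{2,n}$ (here $v$ is an arbitrary vertex of $K_3$, the choice being immaterial since $K_3$ is vertex-transitive). The first step is to observe that $\mc{G}_n$ is an ideal: if $G'$ is a topological minor of some $G \in \mc{G}_n$, then every topological minor of $G'$ is a topological minor of $G$, so $G' \in \mc{G}_n$ as well.

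The second step is to verify that condition~(4) of Theorem~\ref{thm:forest_edge} holds for $\mc{G}_n$, i.e.\ that $\Fan(K_3,S) \not\subseteq \mc{G}_n$ for every $S \subseteq V(K_3)$ with $|S| \le 2$. For this it suffices, in each of the three cases, to exhibit a single graph lying in $\Fan(K_3,S)$ but not in $\mc{G}_n$. If $S = \emptyset$, take $\fan(K_3,\emptyset,n)$ itself; if $|S| = 1$, take $\fan(K_3,\{v\},n)$; and if $|S| = 2$, recall (as used in the proof of Theorem~\ref{thm:forest_edge}) that $\Fan(K_3,S) = \mc{K}_{2,*}$, and take $K_{2,n}$, which is a topological minor of $K_{2,n}$ and hence belongs to $\mc{K}_{2,*}$. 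In the first two cases the chosen graph is (trivially) a topological minor of the corresponding $\fan(\cdot,\cdot,n)$, so it lies in $\Fan(K_3,S)$; in all three cases the chosen graph contains itself as a topological minor and so is excluded from $\mc{G}_n$ by definition. Thus condition~(4) holds.

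Theorem~\ref{thm:forest_edge} then yields that $\nu_{\mc A}$, $e_{\mc A}$, and $\eta_{\mc A}$ are all bounded on $\mc{G}_n$; let $M = M(n)$ be a constant exceeding all three of these bounds. Reading this contrapositively gives precisely the corollary: if a graph $G$ has $\nu_{\mc A}(G) > M$, $e_{\mc A}(G) > M$, or $\eta_{\mc A}(G) > M$, then $G \notin \mc{G}_n$, so $G$ has a topological minor isomorphic to one of $\fan(K_3,\emptyset,n)$, $\fan(K_3,\{v\},n)$, or $K_{2,n}$. There is no genuine obstacle here; the only point that needs a little care is the bookkeeping in the second step -- checking that each of the three extremal graphs really does lie in the relevant $\Fan(K_3,S)$, and invoking the identification $\Fan(K_3,S) = \mc{K}_{2,*}$ in the case $|S| = 2$. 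Everything else is immediate.
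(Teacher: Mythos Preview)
Your argument is correct and is exactly the natural unpacking of the paper's claim that the corollary is ``immediately implied'' by Theorem~\ref{thm:forest_edge}; the paper gives no further proof. Your bookkeeping in step two is accurate, including the use of $\mc{K}_{2,*}=\Fan(K_3,S)$ for $|S|=2$.
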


\begin{COR}\label{cor:diamond_edge}
Let $n$ be a positive integer.
If a graph has sufficiently large  $\nu_{\mc D}$, $e_{\mc D}$, or $\eta_{\mc D}$, then it contains a topological minor isomorphic to $\fan(D,\emptyset, n)$, $\fan(D,\{u\},n)$, $\fan(D,\{v\},n)$, or $K_{2,n}$ where  $u$ is a vertex of degree $2$ and $v$ is a vertex of degree $3$ in $D$. See Figure~\ref{fig:eta_d}.
\end{COR}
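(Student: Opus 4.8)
The final statement to prove is Corollary~\ref{cor:diamond_edge}, which I should treat as the target.

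\textbf{Plan.} The corollary is an immediate consequence of Theorem~\ref{thm:diamond_edge}, so the proof is a short contrapositive unwinding of that theorem together with a concrete identification of the ideals appearing in its condition~4. The plan is as follows. First I would argue by contraposition: suppose there is an infinite family of graphs with $\nu_{\mc D}$ (equivalently $e_{\mc D}$ or $\eta_{\mc D}$, by Theorem~\ref{thm:diamond_edge}) unbounded, but such that none of them contains a topological minor isomorphic to any of $\fan(D,\emptyset,n)$, $\fan(D,\{u\},n)$, $\fan(D,\{v\},n)$, or $K_{2,n}$ for the specified vertices $u,v$ of $D$. Let $\mc G$ be the ideal generated by this family (i.e.\ the class of all graphs isomorphic to a topological minor of some graph in the family); since each of the four named parameters is preserved under taking topological minors — for $\nu$ and $e$ this is Proposition~\ref{prop:topteb}, and for $\eta$ we use that $\eta_{\mc D}$ is bounded on $\mc G$ iff it is bounded on the original family, which follows from Theorem~\ref{thm:diamond_edge} via the other two parameters — the relevant parameter is unbounded on the ideal $\mc G$.

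\textbf{Key steps.} By Theorem~\ref{thm:diamond_edge}, unboundedness of (any of) these parameters on the ideal $\mc G$ forces condition~4 to fail, i.e.\ either $\Fan(D,S)\subseteq\mc G$ for some $S\subseteq V(D)$ with $|S|\le 1$, or $\mc K_{2,*}\subseteq\mc G$. In the first case, $D$ has exactly two degree-$2$ vertices and two degree-$3$ vertices, and every singleton $S$ is (up to isomorphism of $D$) either empty, $\{u\}$ for a degree-$2$ vertex $u$, or $\{v\}$ for a degree-$3$ vertex $v$; hence $\Fan(D,\emptyset)\subseteq\mc G$, $\Fan(D,\{u\})\subseteq\mc G$, or $\Fan(D,\{v\})\subseteq\mc G$. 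By the definition of $\Fan(G,S)$ and $\fan(G,S,k)$, membership $\Fan(D,S)\subseteq\mc G$ means in particular that $\fan(D,S,n)$ is a topological minor of some graph in the family for every $n$, contradicting our assumption. In the second case, $\mc K_{2,*}\subseteq\mc G$ similarly means $K_{2,n}$ is a topological minor of a member of the family for every $n$, again a contradiction. This exhausts all cases, completing the contrapositive and hence the corollary; the accompanying Figure~\ref{fig:eta_d} simply depicts the four obstruction graphs.

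\textbf{Main obstacle.} There is no substantive obstacle — the content is entirely in Theorems~\ref{thm:forest_edge} and~\ref{thm:diamond_edge} and in Proposition~\ref{prop:topteb}, all proved earlier. The only point requiring minor care is the bookkeeping of which singleton subsets $S\subseteq V(D)$ are pairwise non-isomorphic (there are exactly three orbits: $\emptyset$, a degree-$2$ vertex, a degree-$3$ vertex), so that condition~4 of Theorem~\ref{thm:diamond_edge} really does translate into exactly the list $\fan(D,\emptyset,n)$, $\fan(D,\{u\},n)$, $\fan(D,\{v\},n)$, together with $K_{2,n}$ coming from $\mc K_{2,*}$. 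One should also note explicitly why $\eta_{\mc D}$ can be included in the hypothesis despite Proposition~\ref{prop:topteb} not covering edge-brittleness: this is exactly the equivalence $1\Leftrightarrow 2\Leftrightarrow 3$ in Theorem~\ref{thm:diamond_edge}, so boundedness of $\eta_{\mc D}$ on the original family transfers to boundedness of $e_{\mc D}$, which does pass to the ideal. Given these remarks, the proof is three or four sentences.
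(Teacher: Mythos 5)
Your proposal is correct and matches the paper's intent: the paper states this corollary as an immediate consequence of Theorem~\ref{thm:diamond_edge}, and your contrapositive argument (pass to the ideal generated by a putative unbounded family avoiding the four configurations, apply the equivalence of conditions 1--4, and note that the singletons $S\subseteq V(D)$ fall into the two orbits of degree-$2$ and degree-$3$ vertices, with $\mc K_{2,*}$ yielding $K_{2,n}$) is exactly the standard unwinding. The only cosmetic remark is that your detour about $\eta_{\mc D}$ not being topological-minor-monotone is unnecessary: since the family is contained in the ideal it generates, unboundedness of any of the three parameters on the family already gives unboundedness on the ideal.
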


\begin{COR}\label{cor:forest_vertex}
Let $n$ be a positive integer.
 If a graph has sufficiently large $\kappa_{\mc A}$, then it has a topological minor isomorphic to $\fan(K_3,\emptyset,n)$ or $\fan(K_3,\{v\},n)$ where $v\in V(K_3)$.
\end{COR}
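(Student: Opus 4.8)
The plan is to derive Corollary~\ref{cor:forest_vertex} from Theorem~\ref{thm:forest_vertex} by a routine contrapositive argument that converts the qualitative dichotomy for ideals into a quantitative threshold statement. First I would fix a positive integer $n$ and let $\mc{G}_n$ denote the class of all graphs having no topological minor isomorphic to $\fan(K_3,\emptyset,n)$ or to $\fan(K_3,\{v\},n)$ for any $v\in V(K_3)$. The initial step is to observe that $\mc{G}_n$ is an ideal: it is closed under isomorphism by construction, and it is closed under topological minors because the topological-minor relation is transitive, so a forbidden $\fan(K_3,\cdot,n)$ occurring inside a topological minor of a graph $G$ would already occur inside $G$.

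Next I would check that $\mc{G}_n$ satisfies the right-hand side of the equivalence in Theorem~\ref{thm:forest_vertex}, namely that $\Fan(K_3,S)\not\subseteq\mc{G}_n$ for every $S\subseteq V(K_3)$ with $|S|\le 1$. For $S=\emptyset$ this holds because $\fan(K_3,\emptyset,n)\in\Fan(K_3,\emptyset)$ directly from the definition of $\Fan$, whereas $\fan(K_3,\emptyset,n)\notin\mc{G}_n$ since it trivially has itself as a topological minor. For $S=\{v\}$ the identical argument applies with $\fan(K_3,\{v\},n)$ in place of $\fan(K_3,\emptyset,n)$, and the three choices of $v$ yield isomorphic ideals by the symmetry of $K_3$, so only one needs to be treated. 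Consequently Theorem~\ref{thm:forest_vertex} applies to $\mc{G}_n$ and yields a constant $M=M(n)$ such that $\kappa_{\mc A}(G)\le M$ for every $G\in\mc{G}_n$.

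Finally I would take the contrapositive: if $G$ is a graph with $\kappa_{\mc A}(G)>M(n)$, then $G\notin\mc{G}_n$, which by the definition of $\mc{G}_n$ means precisely that $G$ contains a topological minor isomorphic to $\fan(K_3,\emptyset,n)$ or to $\fan(K_3,\{v\},n)$ for some $v\in V(K_3)$ --- this is the desired conclusion, with ``sufficiently large'' made precise as ``larger than $M(n)$''. There is essentially no obstacle here beyond bookkeeping; the only two points that merit a moment of care are verifying that $\mc{G}_n$ is genuinely closed under topological minors, so that Theorem~\ref{thm:forest_vertex} is applicable to it, and noting that having $\fan(K_3,\cdot,n)$ as a topological minor is exactly what disqualifies a graph from $\mc{G}_n$, which is immediate from how the class was set up.
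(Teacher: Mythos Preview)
Your proposal is correct and is exactly the natural unpacking of what the paper means by ``immediately implied by Theorem~\ref{thm:forest_vertex}''; the paper gives no explicit proof of this corollary, and your contrapositive argument via the ideal $\mc{G}_n$ is the standard way to extract a quantitative threshold from the qualitative dichotomy.
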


\begin{COR}\label{cor:outer_vertex}
Let $n$ be a positive integer.
If a graph has sufficiently large $\kappa_{\mc O}$, then it has a topological minor isomorphic to one of the following. See Figure~\ref{fig:kappa_o}.
\begin{itemize}
\item $\fan(K_4,\emptyset, n)$, $\fan(K_4, \{v\},n)$ where $v\in V(K_4)$.
\item $\fan(K_{2,3},\emptyset,n)$, $\fan(K_{2,3},\{u\},n)$, $\fan(K_{2,3},\{w\},n)$, $\fan(K_{2,3},\{u,u'\},n)$ where $u$ and $u'$ are degree-$2$ vertices of $K_{2,3}$ 
and $w$ is a degree-$3$ vertex of $K_{2,3}$.
\item $\fan(K_{2,3}^+,S', n)$ where $S'$ is the set of all degree-$2$ vertices of $K_{2,3}^+$.

\item $\fan(W_k^+,S_k^+, n)$ for some $k\ge 3$ where $S_k^+$ is  the set of degree-$2$ vertices of $W_k^+$.
 \end{itemize}
\end{COR}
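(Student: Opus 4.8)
The plan is to obtain Corollary~\ref{cor:outer_vertex} as a direct consequence of Theorem~\ref{thm:outer_vertex} by a contrapositive/compactness-style reformulation. Fix a positive integer $n$. Let $\mathcal{L}_n$ be the list of graphs named in the statement, that is, $\fan(K_4,\emptyset,n)$, $\fan(K_4,\{v\},n)$, $\fan(K_{2,3},\emptyset,n)$, $\fan(K_{2,3},\{u\},n)$, $\fan(K_{2,3},\{w\},n)$, $\fan(K_{2,3},\{u,u'\},n)$, $\fan(K_{2,3}^+,S',n)$, together with $\fan(W_k^+,S_k^+,n)$ for every $k\ge 3$ (so $\mathcal{L}_n$ is infinite). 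Let $\mathcal{G}_n$ be the class of all graphs having no topological minor isomorphic to a member of $\mathcal{L}_n$. Since ``being a topological minor of'' is transitive and preserved under isomorphism, $\mathcal{G}_n$ is an ideal. The assertion of the corollary is precisely that $\kappa_{\mc O}$ is bounded on $\mathcal{G}_n$ (the resulting bound being the threshold $M(n)$), so it suffices to check that $\mathcal{G}_n$ satisfies the hypothesis of Theorem~\ref{thm:outer_vertex}, i.e.\ that $\mathcal{G}_n$ contains none of the four families of ideals listed there.

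The key point is simply that $\fan(G,S,n)\in\Fan(G,S)$ for every graph $G$ and independent set $S$, because $\fan(G,S,n)$ is a topological minor of itself and occurs in the family defining $\Fan(G,S)$ (take $k=n$). Now I would go through the bullets of Theorem~\ref{thm:outer_vertex}: for each pair $(G,S)$ appearing there, the graph $\fan(G,S,n)$ is, up to isomorphism and the choice of which automorphism-orbit the vertices of $S$ lie in, exactly one of the members of $\mathcal{L}_n$. Concretely, $\Fan(K_4,S)$ with $|S|\le 1$ yields $\fan(K_4,\emptyset,n)$ or $\fan(K_4,\{v\},n)$ (all vertices of $K_4$ are equivalent); $\Fan(K_{2,3},S)$ with $|S|\le 1$ yields $\fan(K_{2,3},\emptyset,n)$, $\fan(K_{2,3},\{u\},n)$, or $\fan(K_{2,3},\{w\},n)$ according to whether $S$ is empty, a degree-$2$ vertex, or a degree-$3$ vertex, and with $S$ a pair of degree-$2$ vertices yields $\fan(K_{2,3},\{u,u'\},n)$; $\Fan(K_{2,3}^+,S)$ with $S$ all degree-$2$ vertices yields $\fan(K_{2,3}^+,S',n)$; and $\Fan(W_k^+,S)$ with $S$ all degree-$2$ vertices yields $\fan(W_k^+,S_k^+,n)$. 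In every case $\fan(G,S,n)\in\Fan(G,S)$ but $\fan(G,S,n)\notin\mathcal{G}_n$, so $\Fan(G,S)\not\subseteq\mathcal{G}_n$, as required.

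Having verified the hypothesis, Theorem~\ref{thm:outer_vertex} supplies a constant $M=M(n)$ with $\kappa_{\mc O}(G)\le M$ for every $G\in\mathcal{G}_n$; equivalently, any graph with $\kappa_{\mc O}(G)>M$ lies outside $\mathcal{G}_n$, hence has a topological minor isomorphic to some member of $\mathcal{L}_n$, which is the conclusion of the corollary (see Figure~\ref{fig:kappa_o}). The same scheme, applied to Theorems~\ref{thm:forest_edge},~\ref{thm:diamond_edge}, and~\ref{thm:forest_vertex}, proves Corollaries~\ref{cor:forest_edge},~\ref{cor:diamond_edge}, and~\ref{cor:forest_vertex} respectively. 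There is no real obstacle here — all the mathematical content sits in Theorem~\ref{thm:outer_vertex} — and the only step needing any attention is the bookkeeping in the middle paragraph: making sure every $(G,S)$ pair occurring in Theorem~\ref{thm:outer_vertex} is matched to the correct representative in the corollary's list (in particular that the two vertex-orbits of $K_{2,3}$ and its degree-$2$ pair are all accounted for, and that the $W_k^+$ case contributes exactly one obstruction for each $k\ge 3$).
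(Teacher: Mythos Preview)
Your proposal is correct and is exactly the intended derivation: the paper states that Corollary~\ref{cor:outer_vertex} is ``immediately implied'' by Theorem~\ref{thm:outer_vertex}, and what you have written is a careful spelling-out of that implication via the ideal $\mathcal{G}_n$ of graphs avoiding the listed fans as topological minors. The bookkeeping matching automorphism orbits of $(G,S)$ pairs to representatives in $\mathcal{L}_n$ is accurate.
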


We investigated qualitative relationship between several measures ($e_{\C}$, $\eta_{\C}$, $\nu_{\C}$, and $\kappa_{\C}$) of distance from an ideal $\C$. Theorem~\ref{thm:edge} shows that if $D\notin \C $ and $\C$ is determined by excluding a finite number of $2$-connected graphs as topological minors, then $e_{\C}$, $\eta_{\C}$, and $\nu_{\C}$  are tied to each other.\footnote{That is, each of them is bounded by a function of the others.} The last  condition can likely be relaxed at the expense of a more technical argument, but the condition $D\notin\C $ is crucial as Proposition~\ref{prop:example} shows. It would be interesting to determine the exact threshold at which the change of behavior occurs.

\begin{QUE}
	What are the minimal minor-closed classes $\C$ such that $\eta_{\C}$ is not bounded by a function of $e_{\C}$? 
\end{QUE}

Similarly, it would be interesting to determine the threshold beyond which Theorem~\ref{t:vbrittle} fails. 

Note that if a class $\C$ satisfies the conclusion of Theorem~\ref{t:vbrittle}, then in particular, $\kappa_{\C}$ is bounded by a function of $\nu_{\C}$. This does not hold for general ideals. Indeed, let $\mc{P}$ be the ideal of planar graphs, and let $\mc{Q}$ be the ideal of graphs with maximum degree at most $3$ embeddable in the projective plane. As the disjoint union of any two non-planar graphs has no projective planar embedding, it follows that $\nu_{\mc{P}}(G) \leq 1$ for every $G \in \mc{Q}$. On the other hand, it is easy to see that $\kappa_{\mc{P}}$ is unbounded on $\mc{Q}$ (and thus $e_{\mc{P}}$ and $\eta_{\mc{P}}$ are also unbounded by Observation~\ref{obs:basic}). 

\begin{QUE}
	What are the minimal minor-closed classes $\C$ such that $\kappa_{\C}$ (respectively,  $e_{\C}$) is not bounded by a function of $\nu_{\C}$? 
\end{QUE}  

\noindent {\bf Acknowledgement.} This research was partially completed at the \emph{2019 Barbados Graph Theory Workshop} held at the Bellairs Research Institute. We thank the participants of the workshop for providing a stimulating environment for research.

\end{document}